\documentclass[12pt]{amsart}
\usepackage[lite]{amsrefs}
\usepackage{amssymb, amsmath, enumitem}
\usepackage{comment}

\newcommand{\R}{\mathbb{R}}

\newcommand{\N}{\mathbb{N}}

\newcommand{\M}{\mathcal{M}}
\newcommand{\E}{\mathcal{E}}
\newcommand{\A}{\mathcal{A}}
\newcommand{\LL}{\mathcal{L}}

\newcommand{\I}{{\bf{I}}}
\newcommand{\g}{{\bf{G}}}
\numberwithin{equation}{section}
\usepackage[linktocpage]{hyperref}
\hypersetup{colorlinks=true, citecolor=red, linkcolor=blue, urlcolor=blue}
\theoremstyle{plain}
\newtheorem{Thm}{Theorem}[section]
\newtheorem{Cor}[Thm]{Corollary}
\newtheorem{Lem}[Thm]{Lemma}
\newtheorem{Prop}[Thm]{Proposition}

\theoremstyle{definition}
\newtheorem{Def}[Thm]{Definition}
\newtheorem{Rem}[Thm]{Remark}

\theoremstyle{remark}

\begin{document}
\title[Solutions to sublinear elliptic equations]{Solutions to sublinear elliptic equations
with finite generalized energy}

\author{Adisak Seesanea}
\address{Department of Mathematics, Hokkaido University, Sapporo, Hokkaido 060-0810, Japan}
\email{\href{mailto:seesanea@math.sci.hokudai.ac.jp}{seesanea@math.sci.hokudai.ac.jp}}

\author{Igor E. Verbitsky}
\address{Department of Mathematics, University of Missouri, Columbia, MO 65211, USA}
\email{\href{mailto:verbitskyi@missouri.edu}{verbitskyi@missouri.edu}}
\begin{abstract}
We give necessary and sufficient conditions for the existence of a positive solution
with zero boundary values to the elliptic equation
\[
\LL u = \sigma u^{q} + \mu \quad \text{in} \;\; \Omega,
\]
in the sublinear case $0<q<1$, with finite generalized energy: 
$\mathbb{E}_{\gamma}[u]:=\int_{\Omega} |\nabla u|^{2} u^{\gamma-1}dx<\infty$,
for $\gamma >0$.  In this case $u \in L^{\gamma+q}(\Omega, \sigma)\cap L^{\gamma}(\Omega, \mu)$, 
where $\gamma=1$ corresponds to finite energy solutions.

Here $\LL u:= -\,\text{div}(\A\nabla u)$ is a linear uniformly elliptic operator with 
bounded measurable coefficients, and $\sigma$, $\mu$ are nonnegative 
 functions (or Radon measures),  
on an arbitrary domain $\Omega\subseteq \R^n$ which possesses a positive 
Green function associated with $\LL$. 

When $0<\gamma\le 1$, this result yields sufficient conditions
for the existence of a positive solution to the above problem which belongs to
the Dirichlet space $\dot{W}_{0}^{1,p}(\Omega)$ for $1<p\le 2$. 
\end{abstract}
\subjclass[2010]{Primary 35J61, 42B37; Secondary 31B10, 31B15.} 
\keywords{Sublinear elliptic equation, divergence form operator, measure data, Green function, 
generalized energy}
\thanks{A. S. is partially supported by Development and Promotion of Science and Technology Talents Project, Thailand (DPST)}
\maketitle
\section{Introduction}\label{Sect:Intro}
We consider the elliptic equation
\begin{equation} \label{main_problem}
\LL u = \sigma u^{q} + \mu \quad \text{in} \;\; \Omega,
\quad u=0 \;\; \text{on} \;\;  \partial\Omega,
\end{equation}
in the sublinear case $0<q<1$. Here $\Omega$ is an arbitrary domain 
(a nonempty open connected set) in $\R^n$, $n \geq 2$, which possesses a positive Green function, 
and $\sigma$, $\mu$ are nonnegative locally integrable functions, 
or more generally, nonnegative Radon measures in $\Omega$. 
This class of (locally finite)  measures is denoted by $\mathcal{M}^{+}(\Omega)$.

The operator $\LL u := -\,\text{div}\,(\A \nabla u)$ with bounded measurable coefficients is assumed to be uniformly elliptic, 
i.e., $\A: \Omega \rightarrow \R^{n \times n}$ is a real symmetric 
matrix-valued function on $\Omega$, and  
there exist positive constants $m\leq M$ such that 
\begin{equation}\label{cond:ellipticity}
m |\xi|^{2} \leq \A(x)\xi \cdot \xi \leq M |\xi|^{2},
\end{equation}
for almost every $x \in \Omega$
and every $\xi \in \R^n$.

Both homogeneous ($\mu \equiv 0$) and inhomogeneous equations  
($\mu \not\equiv 0$) 
will be studied simultaneously. The latter case involves a nontrivial 
question  regarding interaction between $\sigma$ and $\mu$  
(see Lemma \ref{lem_relation} below).

We denote by $\dot{W}^{1,p}_{0}(\Omega)$ ($1\le p<\infty$)  the homogeneous Sobolev (or Dirichlet) 
space defined as the closure of $C_{0}^{\infty}(\Omega)$ with respect to the seminorm 
$\| u \|_{\dot{W}^{1,p}_{0}(\Omega)} :=  \| \nabla u \|_{L^{p}(\Omega)}$.

An indispensable tool in our study is the notion of the generalized energy. 
Suppose $u$ is a positive Green potential, i.e., 
$u=\g\omega$ for $\omega \in \M^{+}(\Omega)$ with $\omega \not\equiv 0$, 
\[
\g\omega(x):=\int_{\Omega}G(x,y)\;d\omega(y), \quad x \in \Omega,
\] 
where $G$ is a positive Green function associated with $\LL$ (see \cite{K}).

As we will show below (see Theorem \ref{Thm:Energy}), 
the condition
\begin{equation}\label{gamma-norm}
\mathbb{E}_{\gamma}[u]:=\int_{\Omega} |\nabla u|^{2} u^{\gamma-1}\;dx < +\infty, \quad \gamma > 0
\end{equation}
is equivalent to the generalized Green energy $\E_{\gamma}[\omega]$ being finite,
\begin{equation}\label{generalized Green energy}
\E_{\gamma}[\omega] := \int_{\Omega} (\g\omega)^{\gamma}\;d\omega
= \int_{\Omega} u^{\gamma}\;d\omega < +\infty,
\end{equation}
which is also equivalent to $u^{\frac{\gamma +1}{2}} \in \dot{W}_{0}^{1,2}(\Omega)$. 
In this case, we  have  
\begin{equation}\label{IBP1}
\int_{\Omega} u^{\gamma}\;d\omega  = \gamma \int_{\Omega} ( \A\nabla u \cdot \nabla u) u^{\gamma-1}\;dx.
\end{equation}

This is well-known in the case 
$\gamma = 1$ for the Laplacian $\LL = -\Delta$, see \cite{L, MZ}. 
Analogous integration by parts formulas with $\gamma>0$ for functions $u$ in certain 
Sobolev spaces with various extra restrictions on $\Omega$ and $\omega$ can be found in \cite{MS}.

Two other key ingredients in our approach are weighted norm inequalities for 
Green potentials of the type $\g: L^r(\Omega, d\omega) \rightarrow 
L^s (\Omega, d\omega) $ for arbitrary $\omega \in \mathcal{M}^{+}(\Omega)$, 
in the non-classical case $0<s<r$ and $r>1$ (Theorem \ref{thm_Ver17}), along with
iterated pointwise estimates for Green potentials (Theorem \ref{Thm:iterated}) discussed below.  

Employing these tools, we establish \textit{necessary and sufficient} conditions 
on $\sigma$ and $\mu$, in terms of their generalized Green energy \eqref{generalized Green energy}, 
for the existence of a positive $\A$-superharmonic solution $u \in L^{q}_{loc}(\Omega, d\sigma)$ (see Definition \ref{def:sol}) 
 to  \eqref{main_problem} that satisfies  \eqref{gamma-norm}.

In the case $\gamma =1$, we also show  that such  a solution $u \in \dot{W}^{1,2}_{0}(\Omega)$
(the so-called finite energy solution) is unique. Notice that when $\LL = -\Delta$, 
both the existence and uniqueness of a finite energy 
solution to \eqref{main_problem} was obtained by the authors in \cite{SV} (see also \cite{CV} for $\Omega =\R^n$).

When $0<\gamma<1$, our result gives sufficient conditions
for the existence of a positive solution $u \in \dot{W}^{1,p}_{0}(\Omega)$ to 
\eqref{main_problem} where $1<p<2$.

Existence and uniqueness of \textit{bounded} solutions to \eqref{main_problem} on $\Omega =\R^n$ (with $\mu= 0$) was established by Brezis and Kamin in \cite{BK}. 

We state our main result and its consequences as follows.

\begin{Thm}\label{thm-main}
Let $0<q<1$ and  $\gamma>0$. Suppose  $G$ is a positive Green function associated with $\LL$ in $\Omega\subseteq \R^n$ $(n \ge 2)$. Let $\sigma, \mu \in \mathcal{M}^{+}(\Omega)$ so that 
$\sigma \not\equiv 0$.  Then there exists a positive solution $u \in L^{q}_{loc}(\Omega, d\sigma)$ to
\eqref{main_problem} which satisfies \eqref{gamma-norm}, or equivalently  $u^{\frac{\gamma +1}{2}} \in \dot{W}_{0}^{1,2}(\Omega)$, if and only if the following conditions hold:
\begin{equation}\label{cond_sigma}
\g\sigma \in L^{\frac{\gamma + q}{1-q}}(\Omega, d\sigma)
\end{equation}
and 
\begin{equation}\label{cond_mu}
\g\mu \in L^{\gamma}(\Omega, d\mu).
\end{equation}
When $\gamma = 1$, such a solution is unique in $\dot{W}_{0}^{1,2}(\Omega)$.
\end{Thm}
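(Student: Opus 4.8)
The plan is to prove the two directions separately, using the three main tools advertised in the introduction: the generalized energy identity (Theorem \ref{Thm:Energy}), the weighted norm inequalities for Green potentials in the non-classical range $0<s<r$, $r>1$ (Theorem \ref{thm_Ver17}), and the iterated pointwise estimates for Green potentials (Theorem \ref{Thm:iterated}).

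For the \emph{necessity} direction, suppose $u$ is a positive solution to \eqref{main_problem} with $\mathbb{E}_{\gamma}[u]<\infty$. Writing $\omega:=\sigma u^{q}+\mu$, we have $u=\g\omega$, so by Theorem \ref{Thm:Energy} the condition $\mathbb{E}_{\gamma}[u]<\infty$ is equivalent to $\E_{\gamma}[\omega]=\int_{\Omega}u^{\gamma}\,d\omega<\infty$. Expanding,
\[
\int_{\Omega}u^{\gamma}\,d\omega=\int_{\Omega}u^{\gamma+q}\,d\sigma+\int_{\Omega}u^{\gamma}\,d\mu<\infty,
\]
so immediately $u\in L^{\gamma+q}(\Omega,d\sigma)\cap L^{\gamma}(\Omega,d\mu)$. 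To extract \eqref{cond_sigma}--\eqref{cond_mu} I would exploit the lower pointwise bound coming from the equation: $u=\g(\sigma u^{q}+\mu)\ge \g(\sigma u^q)$ and $u\ge\g\mu$. The second gives $\int(\g\mu)^{\gamma}\,d\mu\le\int u^{\gamma}\,d\mu<\infty$, which is \eqref{cond_mu}. For \eqref{cond_sigma}, the natural route is to bootstrap: using $u\ge\g(u^q\sigma)$ together with $u\in L^{\gamma+q}(d\sigma)$ and a weighted norm inequality of the form $\|\g(f\,d\sigma)\|_{L^{\gamma+q}(d\sigma)}\gtrsim\|f\|_{L^{r}(d\sigma)}$-type duality (Theorem \ref{thm_Ver17} in the range $s=\gamma+q<r=\tfrac{\gamma+q}{q}$, so that $r>1$ since $\gamma+q>q$), one converts integrability of $u$ against $d\sigma$ into the testable condition $\g\sigma\in L^{\frac{\gamma+q}{1-q}}(d\sigma)$. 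This is likely where the delicate point lies: one must run the weighted inequality in exactly the non-classical exponent configuration where Theorem \ref{thm_Ver17} applies, and check the arithmetic $\tfrac{1}{s}=\tfrac{1}{r}+\tfrac{1}{r'}\cdot(\text{something})$ matches $\tfrac{\gamma+q}{1-q}$.

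For the \emph{sufficiency} direction, assume \eqref{cond_sigma} and \eqref{cond_mu}. The standard approach for sublinear problems is iteration/monotone approximation: set $u_{0}:=c\,\g\sigma$ for a suitable small constant $c>0$ (and incorporate $\g\mu$), and define $u_{j+1}:=\g(\sigma u_{j}^{q}+\mu)$. Using the iterated pointwise estimates of Theorem \ref{Thm:iterated}, one shows that the conditions \eqref{cond_sigma}--\eqref{cond_mu} guarantee a pointwise bound $u_{j}\le C(\g\sigma+\g\mu)$ uniformly in $j$ (with constants independent of $j$, after choosing $c$ correctly), and simultaneously a lower bound keeping the sequence bounded below by a positive potential. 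Monotonicity of the iteration (either $u_j\uparrow$ from a small subsolution or $u_j\downarrow$ from a large supersolution) then produces a limit $u$ solving $u=\g(\sigma u^q+\mu)$ pointwise, which by the theory of $\A$-superharmonic functions is a solution to \eqref{main_problem} in the sense of Definition \ref{def:sol}. Finiteness of $\mathbb{E}_{\gamma}[u]$ follows because $\int u^{\gamma}\,d\omega=\int u^{\gamma+q}\,d\sigma+\int u^{\gamma}\,d\mu$ and both terms are controlled by $\int(\g\sigma)^{\gamma+q}\,d\sigma$, $\int(\g\sigma)^{\gamma}\,d\mu$, $\int(\g\mu)^{\gamma+q}\,d\sigma$, $\int(\g\mu)^{\gamma}\,d\mu$ — the mixed terms being handled via the interaction Lemma \ref{lem_relation}, and the pure terms reducing to \eqref{cond_sigma}--\eqref{cond_mu} (again invoking a weighted norm inequality to pass from $\g\sigma\in L^{\frac{\gamma+q}{1-q}}(d\sigma)$ to $\g\sigma\in L^{\gamma+q}(d\sigma)$, which holds in the relevant range). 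Then Theorem \ref{Thm:Energy} upgrades $\E_{\gamma}[\omega]<\infty$ to $\mathbb{E}_{\gamma}[u]<\infty$ and to $u^{(\gamma+1)/2}\in\dot W_{0}^{1,2}(\Omega)$.

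Finally, for \emph{uniqueness} when $\gamma=1$: here the solution lies in $\dot W_{0}^{1,2}(\Omega)$ and one can test the equation against admissible functions. Given two finite-energy solutions $u_{1},u_{2}$, I would test the difference of the equations $\LL u_1-\LL u_2=\sigma(u_1^q-u_2^q)$ with the test function $u_1-u_2$ (justified since $u_1-u_2\in\dot W_0^{1,2}$ by the energy estimates), obtaining
\[
\int_{\Omega}\A\nabla(u_1-u_2)\cdot\nabla(u_1-u_2)\,dx=\int_{\Omega}(u_1^q-u_2^q)(u_1-u_2)\,d\sigma.
\]
The left side is $\ge m\|\nabla(u_1-u_2)\|_{L^2}^2\ge 0$, while the right side is $\le 0$ because $t\mapsto t^q$ is strictly increasing on $(0,\infty)$ (so $(u_1^q-u_2^q)(u_1-u_2)\ge 0$ pointwise — wait, that makes the right side $\ge 0$ too). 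The correct sign analysis: rearranging as $\int\A\nabla(u_1-u_2)\cdot\nabla(u_1-u_2)\,dx - \int(u_1^q-u_2^q)(u_1-u_2)\,d\sigma = 0$ with both terms nonnegative forces each to vanish, hence $\nabla(u_1-u_2)\equiv 0$, so $u_1=u_2$ since both are in $\dot W_0^{1,2}(\Omega)$. The main obstacle in this step is purely technical: justifying that $u_1-u_2$ is a legitimate test function and that the integration against $d\sigma$ makes sense, which follows from $u_i\in L^{1+q}(\Omega,d\sigma)$ and the finite-energy hypothesis together with \eqref{IBP1}.
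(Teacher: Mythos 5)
Your overall architecture — pass to the integral equation, use Theorem \ref{Thm:Energy} to convert $\mathbb{E}_{\gamma}[u]<\infty$ into $\E_{\gamma}[\omega]<\infty$ with $\omega=\sigma u^q+\mu$, run a monotone iteration for sufficiency, and split the energy into $\int u^{\gamma+q}d\sigma+\int u^\gamma d\mu$ — matches the paper. There are, however, two genuine errors and one point worth tightening.

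\textbf{The pointwise bound in the iteration is wrong.} You claim the iterates satisfy $u_j\le C(\g\sigma+\g\mu)$ uniformly. This cannot hold: by Theorem \ref{Thm:lowerbound}, any positive solution of \eqref{eq_int} obeys $u\ge c(\g\sigma)^{\frac{1}{1-q}}$, and since $\frac{1}{1-q}>1$, the right-hand side dominates $\g\sigma$ wherever $\g\sigma$ is large (e.g.\ near a point mass of $\sigma$). The paper does not control the iterates pointwise; it controls the $L^{\gamma+q}(\Omega,d\sigma)$ norm uniformly, using the weighted norm inequality \eqref{weighted_norm_ineq_special} (equivalent to \eqref{cond_sigma} by Theorem \ref{thm_Ver17}) together with the key interaction bound $\g\mu\in L^{\gamma+q}(\Omega,d\sigma)$ from Lemma \ref{lem_relation}. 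Relatedly, your starting iterate $u_0=c\,\g\sigma$ has the wrong power: in the homogeneous case the paper takes $u_0=\kappa(\g\sigma)^{\frac{1}{1-q}}$, and in the inhomogeneous case $u_0=\g\mu$; either choice is needed to make $\{u_j\}$ increasing and to place $u_0$ in $L^{\gamma+q}(d\sigma)$ via \eqref{cond_mu-sigma}.

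\textbf{The uniqueness argument does not work.} Testing $\LL(u_1-u_2)=\sigma(u_1^q-u_2^q)$ against $u_1-u_2$ yields
\[
\int_\Omega \A\nabla(u_1-u_2)\cdot\nabla(u_1-u_2)\,dx=\int_\Omega(u_1^q-u_2^q)(u_1-u_2)\,d\sigma,
\]
and as you yourself observe, both sides are nonnegative because $t\mapsto t^q$ is increasing. An equality between two nonnegative quantities carries no information; you cannot conclude that both vanish. This is exactly why uniqueness for sublinear equations is not elementary. The argument the paper invokes (Remark \ref{Rem_Thm}, deferring to \cite{SV} and \cite{CV}) uses the \emph{minimality} of the constructed solution together with the hidden convexity of the Dirichlet energy, i.e.\ a Brezis--Oswald-type pair of test functions such as $\frac{u_1^2-u_2^2}{u_1}$ and $\frac{u_2^2-u_1^2}{u_2}$: adding the two resulting identities produces a nonnegative gradient term on one side and, on the other, $\int(u_1^{q-1}-u_2^{q-1})(u_1^2-u_2^2)\,d\sigma$, which is $\le 0$ since $t\mapsto t^{q-1}$ is \emph{decreasing} for $0<q<1$. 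That sign flip is what makes the argument close.

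\textbf{Necessity is essentially fine but can be made cleaner.} Rather than invoking the full characterization in Theorem \ref{thm_Ver17}(ii) (which does work, via the equivalence \eqref{energy2}), the paper applies the pointwise lower bound \eqref{lowerbound} directly: $u\ge c(\g\sigma)^{\frac{1}{1-q}}$ gives $\int(\g\sigma)^{\frac{\gamma+q}{1-q}}d\sigma\le c^{-(\gamma+q)}\int u^{\gamma+q}d\sigma<\infty$, which is \eqref{cond_sigma} without any exponent bookkeeping. Also note your claim that $\g\sigma\in L^{\frac{\gamma+q}{1-q}}(d\sigma)$ implies $\g\sigma\in L^{\gamma+q}(d\sigma)$ via ``a weighted norm inequality'' is not a Lebesgue embedding (the exponent goes the wrong way) — it is not needed anyway, since the iteration produces $u\in L^{\gamma+q}(d\sigma)$ directly and the lower bound gives \eqref{cond_sigma} separately.
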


\begin{Cor}\label{cor-main}
Under the assumptions of Theorem \ref{thm-main},  suppose that 
  $\frac{n}{n-1}< p \le 2$, where $n \geq 3$. If both conditions \eqref{cond_sigma} and \eqref{cond_mu} hold with $\gamma := \frac{p(n-1)-n}{n-p} \in (0,1]$, i.e., 
\begin{equation}\label{cond:pair0}
\g\sigma \in L^{r}(\Omega, d\sigma)
\quad \text{and}  \quad 
\g\mu \in L^{s}(\Omega, d\mu),
\end{equation}
where $r:=\frac{p(n-2)}{(1-q)(n-p)} - 1$ and $s:= \frac{p(n-1)-n}{n-p}$,
then there exists a positive solution 
$u \in L^{q}_{loc}(\Omega, d\sigma) \cap \dot{W}_{0}^{1,p}(\Omega)$
to \eqref{main_problem}.
\end{Cor}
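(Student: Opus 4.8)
The plan is to deduce the statement from Theorem~\ref{thm-main} applied with the specific exponent $\gamma:=\frac{p(n-1)-n}{n-p}$, and then to promote the resulting regularity $u^{(\gamma+1)/2}\in\dot{W}^{1,2}_{0}(\Omega)$ to $u\in\dot{W}^{1,p}_{0}(\Omega)$ via Hölder's inequality combined with the Sobolev embedding, the point being that the exponents match \emph{exactly} for this choice of $\gamma$.

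First I would record the elementary algebra. For $\gamma=\frac{p(n-1)-n}{n-p}$ one computes
\[
\gamma+1=\frac{p(n-2)}{n-p},\qquad 1-\gamma=\frac{n(2-p)}{n-p},
\]
so that $\gamma\in(0,1]$ precisely when $\frac{n}{n-1}<p\le 2$, and
\[
\frac{\gamma+q}{1-q}=\frac{p(n-2)}{(1-q)(n-p)}-1=r,\qquad \gamma=\frac{p(n-1)-n}{n-p}=s .
\]
Hence the pair of hypotheses \eqref{cond:pair0} is nothing but conditions \eqref{cond_sigma} and \eqref{cond_mu} for this value of $\gamma$, and Theorem~\ref{thm-main} provides a positive solution $u\in L^{q}_{loc}(\Omega,d\sigma)$ of \eqref{main_problem} with $\mathbb{E}_{\gamma}[u]=\int_\Omega|\nabla u|^2u^{\gamma-1}\,dx<\infty$, equivalently $v:=u^{(\gamma+1)/2}\in\dot{W}^{1,2}_{0}(\Omega)$. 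If $p=2$ then $\gamma=1$ and $u\in\dot{W}^{1,2}_{0}(\Omega)$ is already part of Theorem~\ref{thm-main}; so from now on assume $\frac{n}{n-1}<p<2$.

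The core step uses $n\ge 3$: by the Sobolev inequality $v\in L^{2n/(n-2)}(\Omega)$, i.e.
\[
\int_\Omega u^{\frac{(\gamma+1)n}{n-2}}\,dx=\int_\Omega v^{\frac{2n}{n-2}}\,dx<\infty,
\]
and, by the first identity above, $\frac{(\gamma+1)n}{n-2}=\frac{pn}{n-p}$. On the other hand, factoring $|\nabla u|^{p}=\bigl(|\nabla u|^2u^{\gamma-1}\bigr)^{p/2}u^{(1-\gamma)p/2}$ and applying Hölder's inequality with conjugate exponents $\frac2p$ and $\frac{2}{2-p}$ gives
\[
\int_\Omega|\nabla u|^{p}\,dx\le\bigl(\mathbb{E}_{\gamma}[u]\bigr)^{p/2}\Bigl(\int_\Omega u^{\frac{(1-\gamma)p}{2-p}}\,dx\Bigr)^{\frac{2-p}{2}},
\]
and, by the second identity above, $\frac{(1-\gamma)p}{2-p}=\frac{pn}{n-p}$ as well --- precisely the exponent just shown to be integrable. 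Therefore $\nabla u\in L^{p}(\Omega)$.

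It remains to pass from $\nabla u\in L^{p}(\Omega)$ to the membership $u\in\dot{W}^{1,p}_{0}(\Omega)$, i.e.\ to exhibit $u$ as a limit of $C^\infty_0(\Omega)$ functions in the seminorm $\|\nabla\cdot\|_{L^p}$; I expect this to be the only real obstacle. Since the solution $u$ furnished by Theorem~\ref{thm-main} is a positive Green potential (Definition~\ref{def:sol}), $u=\g\omega$ with $\omega=\sigma u^q+\mu\in\M^{+}(\Omega)$, one can argue by the familiar approximation scheme: exhaust $\Omega$ by relatively compact open subsets and replace $\omega$ by its restrictions, so that the corresponding potentials are bounded Green potentials lying in $\dot{W}^{1,p}_{0}(\Omega)$, and then pass to the limit using the bound $\|\nabla u\|_{L^p(\Omega)}<\infty$ established above together with dominated convergence for the gradients (the $p=2$ version of this argument is carried out in \cite{SV}). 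This yields $u\in L^{q}_{loc}(\Omega,d\sigma)\cap\dot{W}^{1,p}_{0}(\Omega)$, completing the proof.
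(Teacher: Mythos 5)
Your argument is correct and closely parallels the paper's, with one genuine variation in the middle. Both rest on the identities $\gamma+1=\frac{p(n-2)}{n-p}$, $1-\gamma=\frac{n(2-p)}{n-p}$, hence $\frac{(\gamma+1)n}{n-2}=\frac{(1-\gamma)p}{2-p}=\frac{np}{n-p}$, and both use the H\"older factorization $|\nabla u|^p=\bigl(|\nabla u|^2u^{\gamma-1}\bigr)^{p/2}u^{(1-\gamma)p/2}$ with conjugate exponents $\frac{2}{p}$, $\frac{2}{2-p}$. The paper packages this as a standalone Theorem~\ref{below-energy} (applied to $\omega=u^{q}d\sigma+\mu$) and obtains the needed $L^{np/(n-p)}$ integrability indirectly, by applying the Sobolev inequality to the truncations $u_k=\min(u,k)$ on an exhaustion $\Omega_k\uparrow\Omega$ and closing the estimate \eqref{est2-below-energy} self-consistently. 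You instead apply Sobolev to $v=u^{(\gamma+1)/2}$, whose membership in $\dot{W}^{1,2}_0(\Omega)$ is already handed to you by Theorem~\ref{thm-main}; this gives $u\in L^{np/(n-p)}(\Omega)$ in one clean step and sidesteps any discussion of uniformity of the Sobolev constant on the exhausting domains, which is arguably a cleaner route. On the final passage from $\nabla u\in L^p(\Omega)$ to $u\in\dot{W}^{1,p}_0(\Omega)$, your sketch (restricting $\omega$ to compact sets and claiming the resulting potentials are bounded and lie in $\dot{W}^{1,p}_0$) is the weakest point and would need more care; since you already have $u\in L^{np/(n-p)}(\Omega)$, $\nabla u\in L^p(\Omega)$, and $u=\g\omega$ a potential vanishing quasi-everywhere at $\partial\Omega$, the Deny--Lions argument the paper itself invokes in Lemma~\ref{lemma2} applies here with exponent $p$ in place of $2$ and is the cleaner way to conclude.
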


A sufficient condition for 
\eqref{cond:pair0}  is given by  
\begin{equation}\label{cond:pair1}
\sigma \in L^{r_{1}}(\Omega)
\quad \text{and} \quad 
\mu \in L^{s_{1}}(\Omega),
\end{equation}
where $ r_{1} := \frac{n(\gamma + 1)}{n(1-q)+2(\gamma +q)}$  
and  $s_{1} :=\frac{n(\gamma+1)}{n+2\gamma}$ (see Proposition \ref{Prop} below).
Thus, in light of Corollary \ref{cor-main}, we have the following result.

\begin{Cor}\label{cor-main2}
Under the assumptions of Corollary \ref{cor-main}, if 
\begin{equation}\label{cond:pair2}
\sigma \in L^{r_{2}}(\Omega)
\quad \text{and} \quad 
\mu \in L^{s_{2}}(\Omega),
\end{equation}
where $ r_{2} := \frac{np}{(n-p)(1-q)+2p}$  
and  $s_{2} :=\frac{np}{n+p}$, then there exists a positive solution 
$u \in L^{q}_{loc}(\Omega, d\sigma) \cap \dot{W}_{0}^{1,p}(\Omega)$ to \eqref{main_problem}.
\end{Cor}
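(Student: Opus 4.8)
The plan is to reduce Corollary~\ref{cor-main2} to Corollary~\ref{cor-main} by verifying that the Lebesgue conditions \eqref{cond:pair2} imply the Green-potential conditions \eqref{cond:pair0}. The bridge is the sufficient condition \eqref{cond:pair1}: I would first show that \eqref{cond:pair2} with exponents $r_2$ and $s_2$ implies \eqref{cond:pair1} with the exponents $r_1$ and $s_1$ from the statement preceding Corollary~\ref{cor-main2}, and then invoke Proposition~\ref{Prop} (cited there) to pass from \eqref{cond:pair1} to \eqref{cond:pair0}, at which point Corollary~\ref{cor-main} delivers the desired solution $u \in L^{q}_{loc}(\Omega, d\sigma) \cap \dot{W}_{0}^{1,p}(\Omega)$. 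So the whole argument is a bookkeeping chain: \eqref{cond:pair2} $\Rightarrow$ \eqref{cond:pair1} $\Rightarrow$ \eqref{cond:pair0} $\Rightarrow$ existence.

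The substantive step is the first implication, and it is really just algebra on the exponents together with one soft analytic fact. Recall that in Corollary~\ref{cor-main} one has $\gamma = \frac{p(n-1)-n}{n-p} \in (0,1]$, equivalently $p = \frac{n(\gamma+1)}{n+\gamma}$ (this is the relation one inverts). The claim $\sigma \in L^{r_2}(\Omega) \Rightarrow \sigma \in L^{r_1}(\Omega)$ cannot hold on an arbitrary domain unless $r_2 \le r_1$ \emph{and} $\Omega$ has finite measure — but on a general $\Omega$ the correct reading is that we should check $r_1 = r_2$ (and likewise $s_1 = s_2$) after substituting $p = \frac{n(\gamma+1)}{n+\gamma}$, so that \eqref{cond:pair1} and \eqref{cond:pair2} are literally the same condition rewritten in terms of $p$ rather than $\gamma$. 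Concretely, I would substitute $\gamma+1 = \frac{np}{n-p} - \gamma \cdot\frac{?}{?}$—more cleanly, from $p = \frac{n(\gamma+1)}{n+\gamma}$ one gets $\gamma+1 = \frac{p(n+\gamma)}{n}$ and $n+2\gamma = \frac{n(n+p)}{?}$; carrying this through, $s_1 = \frac{n(\gamma+1)}{n+2\gamma}$ becomes $\frac{np}{n+p} = s_2$, and similarly $r_1 = \frac{n(\gamma+1)}{n(1-q) + 2(\gamma+q)}$ becomes $\frac{np}{(n-p)(1-q) + 2p} = r_2$. Thus the two corollaries are genuinely equivalent statements, one phrased via $\gamma$ and one via $p$.

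Accordingly, the proof I would write is short: assume \eqref{cond:pair2}; using $\gamma = \frac{p(n-1)-n}{n-p}$ (equivalently $p = \frac{n(\gamma+1)}{n+\gamma}$, which lies in $(\tfrac{n}{n-1},2]$ exactly when $\gamma \in (0,1]$, matching the hypotheses of both corollaries), verify by direct computation that $r_2 = r_1$ and $s_2 = s_1$, so that \eqref{cond:pair2} is identical to \eqref{cond:pair1}; then Proposition~\ref{Prop} gives \eqref{cond:pair0}, and Corollary~\ref{cor-main} produces the positive solution $u \in L^{q}_{loc}(\Omega, d\sigma) \cap \dot{W}_{0}^{1,p}(\Omega)$ to \eqref{main_problem}. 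The only place requiring care—the "main obstacle," such as it is—is making sure the parameter dictionary between $\gamma$ and $p$ is applied consistently in both exponents and that the ranges $\frac{n}{n-1} < p \le 2$ and $\gamma \in (0,1]$ correspond under this change of variables; once that is pinned down, the rest is invocation of the two preceding results with nothing new to prove.
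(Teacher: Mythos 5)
Your reduction chain --- \eqref{cond:pair2} $\Rightarrow$ \eqref{cond:pair1} $\Rightarrow$ \eqref{cond:pair0} via Proposition~\ref{Prop}, then Corollary~\ref{cor-main} --- is exactly the paper's route, and the final assertion that $r_1=r_2$ and $s_1=s_2$ after substituting the $\gamma\leftrightarrow p$ dictionary is correct.

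However, the dictionary you wrote down is wrong. From $\gamma=\frac{p(n-1)-n}{n-p}$ one solves
\[
\gamma(n-p)=p(n-1)-n\ \Longrightarrow\ n(\gamma+1)=p(n+\gamma-1)\ \Longrightarrow\ p=\frac{n(\gamma+1)}{n+\gamma-1},
\]
which is also the exponent appearing in Theorem~\ref{below-energy}. You wrote $p=\frac{n(\gamma+1)}{n+\gamma}$, which corresponds to $\gamma=\frac{n(p-1)}{n-p}$, not the $\gamma$ of Corollary~\ref{cor-main}. If you carry your formula through the computation you sketched (you left $n+2\gamma=\frac{n(n+p)}{?}$ unresolved), you would get $s_1=\frac{p(n-1)}{n+p-2}$, which does \emph{not} equal $s_2=\frac{np}{n+p}$; so the bookkeeping would not close and the proof would appear to fail. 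With the correct inversion one instead has $n+2\gamma=\frac{(n-2)(n+p)}{n-p}$ and $n(\gamma+1)=\frac{np(n-2)}{n-p}$, whence $s_1=\frac{np}{n+p}=s_2$, and similarly $n(1-q)+2(\gamma+q)=(n-2)\frac{(n-p)(1-q)+2p}{n-p}$ gives $r_1=\frac{np}{(n-p)(1-q)+2p}=r_2$. Fix the inversion formula and actually complete the two displayed verifications; otherwise the argument coincides with the paper's, which applies Proposition~\ref{Prop} with $\alpha=1$ directly to $\sigma$ (with $\beta=r$) and to $\mu$ (with $\beta=s$) and then invokes Corollary~\ref{cor-main}.
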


We observe that Corollary \ref{cor-main2} in the  case $\mu \equiv 0$ (for  
bounded domains $\Omega$) is due to Boccardo and Orsina \cite{BO}, with a different proof.

We sketch our method of proof of Theorem \ref{thm-main} in 
the inhomogeneous case $\mu \not\equiv 0$. (The homogeneous case $\mu\equiv 0 $ is simpler since possible interaction between the nonlinear term involving $\sigma$ and $\mu$  is omitted.)

We start with  the corresponding integral equation
\begin{equation}\label{eq_int_0}
\tilde{u} = \g(\tilde{u}^{q}d\sigma) + \g\mu  \quad \text{in} \;\; \Omega, 
\end{equation}
under some mild assumptions on the kernel $G$, which by the maximum principle are automatically satisfied by Green functions associated with
elliptic operators (including $\LL$) in $\Omega$.

We find a crucial relation between $\sigma$ and $\mu$, which follows from  
conditions \eqref{cond_sigma} and \eqref{cond_mu}, and  yields an important two-weight condition:
\begin{equation}\label{cond_mu-sigma}
\g\mu \in L^{\gamma+q}(\Omega, d\sigma).
\end{equation}

This supplementary fact allows us to construct a positive solution
$\tilde{u} \in L^{\gamma+q}(\Omega, d\sigma) \cap L^{\gamma}(\Omega, d\mu)$
to \eqref{eq_int_0} by using an iterative procedure, under assumptions 
\eqref{cond_sigma} and \eqref{cond_mu}.

In this procedure, we employ the fact established  recently in \cite{V}
that condition \eqref{cond_sigma} is equivalent to the weighted norm inequality for Green potentials, 
\begin{equation}\label{weighted_norm_ineq_special0}
\big\| \g(f d\sigma) \big\|_{L^{\gamma +q}(\Omega,\;d\sigma)} 
\leq C \| f \|_{L^{\frac{\gamma+q}{q}}(\Omega,\;d\sigma)}, 
\quad \forall f \in L^{\frac{\gamma+q}{q}}(\Omega, d\sigma), 
\end{equation}
where $C$ is a positive constant independent of $f$. Therefore, either \eqref{cond_sigma} 
or \eqref{weighted_norm_ineq_special0}, together with \eqref{cond_mu}, turns out to be 
necessary and sufficient for the existence of a  positive solution  
 to \eqref{main_problem} satisfying \eqref{gamma-norm}.

When $G$ is a positive Green function associated with $\LL$ on $\Omega$,
the integral equation \eqref{eq_int_0} is equivalent to problem \eqref{main_problem}.
Appealing to our characterization of the generalized Green energy, 
\eqref{gamma-norm} $\Longleftrightarrow$ \eqref{generalized Green energy}
with $\omega:=\sigma u^{q} + \mu $, we deduce that \eqref{cond_sigma} and \eqref{cond_mu}
are necessary and sufficient for the existence of a positive solution $u \in L^{q}_{loc}(\Omega, d\sigma)$ 
to \eqref{main_problem} which satisfies \eqref{gamma-norm}.

This paper is organized as follows. 
In Sect. \ref{Sect:Prelim}, we recall some background facts in potential theory and PDE, and
collect useful results which are repeatedly referred to throughout this study.
Sect. \ref{Sect:Energy} is devoted to a characterization of the generalized Green energy. Our main result and its consequences are demonstrated in Sect. \ref{Sect:SolnEq}.

\section{Preliminaries}\label{Sect:Prelim}
Let $\Omega\subseteq \R^n$ be a domain, and  $\LL u := -\text{div}\,(\A \nabla u)$,
where $\A: \Omega \rightarrow \R^{n \times n}$  satisfies the uniform ellipticity condition \eqref{cond:ellipticity}.
\subsection{Function spaces}
Denote by $C^{\infty}_{0}(\Omega)$ the set of all smooth compactly supported functions on 
$\Omega$.
For $0 < p<\infty$ and $\omega \in  \mathcal{M}^{+}(\Omega)$,
we denote by $L^{p}(\Omega, d\omega)$ the usual Lebesgue space of all real-valued measurable 
 functions $u$ on $\Omega$ such that 
\[
\Vert u \Vert_{L^{p}(\Omega,\,d\omega)} := \left( \int_{\Omega}|u|^{p}\;d\omega \right)^{\frac{1}{p}}
< +\infty.
\]
The corresponding local space is denoted by $L_{loc}^{p}(\Omega, d\omega)$.  

For $1 \leq p < \infty$, the Sobolev space $W^{1, p}(\Omega)$ consists of all functions 
$u \in L^p(\Omega)$ such that $|\nabla u | \in L^p(\Omega)$, where $\nabla u $ is the vector of 
distributional partial derivatives of $u$ of order 1, equipped with the norm 
\[
\| u \|_{W^{1, p}(\Omega)} := \| u \|_{L^{p}(\Omega)} + \| \nabla u \|_{L^{p}(\Omega)}.
\]
The corresponding local space is denoted by  $W_{loc}^{1, p}(\Omega)$. 
The Sobolev space $W_{0}^{1, p}(\Omega)$ is defined as the closure of $C_{0}^{\infty}(\Omega)$ 
in $W^{1, p}(\Omega)$. It is easy to see that $W_{0}^{1, p}(\R^n) = W^{1, p}(\R^n)$. 
The homogeneous version of  $W_{0}^{1, p}(\Omega)$, called the homogeneous Sobolev space 
(or Dirichlet space), denoted by $\dot{W}^{1,p}_{0}(\Omega)$, is defined as 
the closure of $C_{0}^{\infty}(\Omega)$ with respect to the seminorm 
\[
\| u \|_{\dot{W}^{1,p}_{0}(\Omega)} := \| \nabla u \|_{L^{p}(\Omega)}.
\]
That is, $\dot{W}^{1,p}_{0}(\Omega)$ is the set of all functions $u \in W^{1,p}_{loc}(\Omega)$ 
such that $|\nabla u| \in L^{p}(\Omega)$ for which there exists a sequence 
$\{ \varphi_{j}\}_{j=1}^{\infty} \subset C_{0}^{\infty}(\Omega)$ such that 
$\| \nabla u - \nabla \varphi_{j} \|_{L^{p}(\Omega)} \rightarrow 0 $ as $j \rightarrow \infty$.
When $1 < p < n$, the dual space to $\dot{W}_{0}^{1,p}(\Omega)$ denoted by 
$\dot{W}^{-1,p'}(\Omega)$, is the space of distributions $\omega\in \mathcal{D}'(\Omega)$ such that
\[
\| \omega \|_{\dot{W}^{-1,p'}(\Omega)} := 
\sup \frac{|\langle \omega, u \rangle|}{ \| u \|_{\dot{W}_{0}^{1,p}(\Omega)}} < +\infty
\]
where the supremum is taken over all nontrivial functions $u \in C^\infty_{0}(\Omega)$. 
Here $p':= \frac{p}{p-1}$ is the H\"{o}lder conjugate of $p$.
\subsection{$\A$-superharmonic functions}
A function $u \in W_{loc}^{1,2}(\Omega)$ is said to be $\A$-harmonic if 
$u$ satisfies the equation
\begin{equation}\label{eq:laplace}
\LL u = 0 \qquad \text{in} \;\; \Omega
\end{equation}
in the distributional sense, i.e.,
\[
\int_{\Omega} \A\nabla u \cdot \nabla \varphi \;dx = 0,  
\quad \forall\varphi \in  C_{0}^{\infty}(\Omega).
\]
Every $\A$-harmonic function $u$ has a continuous 
representative which coincides with $u$ a.e. (see \cite[Theorem 3.70]{HKM}). 
We  denote by $\mathcal{H}_{\A}(\Omega)$ the set of all continuous 
$\A$-harmonic functions in $\Omega$.

A function $u: \Omega \rightarrow (-\infty, +\infty]$ is $\A$-superharmonic if 
$u$ is lower semicontinuous in $\Omega$, $u \not\equiv +\infty $ in each component of $\Omega$, 
and whenever $D$ is a relatively compact open subset of  $\Omega$ and
$h \in C(\overline{D}) \cap \mathcal{H}_{\A}(D)$, the inequality $h \leq u$ on $\partial D$ 
yields $h \leq u$ on $D$. A function $u$ in $\Omega$ is called $\A$-subharmonic 
if $-u$ is $\A$-superharmonic.

Every $\A$-superharmonic function $u$ in $\Omega$ is quasicontinuous in $\Omega$ 
\cite[Theorem 10.9]{HKM}, which means that for every $\epsilon > 0$, there is an open set 
$G \subset \Omega$ such that $\text{cap}(G)<\epsilon$ and the restriction 
$u_{| \Omega \setminus  G}$ is continuous on $\Omega \setminus  G$. 

Here the capacity of an open set $G\subset\Omega$ is defined by 
\[
\text{cap}(G):= \sup \lbrace \text{cap}(K): \, K \subset \Omega \;\; \text{compact} \rbrace, 
\]
where the capacity of a compact set $K \subset \Omega$ is given by
\[
\text{cap}(K) :=  \inf \big\lbrace \Vert  \nabla u \Vert^{2}_{L^{2}(\Omega)}\colon \, \,  u \geq 1 \;\; \text{on} \;\; K, \quad u \in C_{0}^{\infty}(\Omega) \big\rbrace.
\]
For an arbitrary set $E\subset \Omega$, 
\[
\text{cap}(E):= \inf \lbrace \text{cap}(G): \, E \subseteq G \subset \Omega, \;\; G \, \, \text{open} \rbrace. 
\]

A statement is said to hold quasi-everywhere (q.e.) in $\Omega$  if it holds everywhere 
except for a set of capacity zero in $\Omega$. 

Denote by $\mathcal{M}_0^{+}(\Omega)$ the class of all measures $\omega \in \M^{+}(\Omega)$
which are absolutely continuous with respect to capacity, that is, 
$\omega(K) = 0$ whenever $\text{cap}(K) = 0$ for every compact subset $K$ in $\Omega$.
It follows by Poincar\'{e}'s inequality \cite[Corollary 1.57]{MZ} that Lebesgue measure is
absolutely continuous with respect to the capacity.

Let $u$ be an $\A$-superharmonic function in $\Omega$. Then $u \in L^{r}_{loc}(\Omega)$ 
and $|\nabla u| \in L^{s}_{loc}(\Omega)$ whenever $0<r<\frac{n}{n-2}$ and $0<s <\frac{n}{n-1}$. 
In particular, $u \in W^{1,p}_{loc}(\Omega)$ whenever $1 \leq p < \frac{n}{n-1}$.
Moreover, there exists a unique measure 
$ \omega \in \mathcal{M}^{+}(\Omega)$ 
such  that 
\begin{equation}\label{eq:Riesz-meas}
\LL u = \omega \quad \text{in} \;\; \Omega
\end{equation}
in the distributional sense, i.e.,
\[
\int_{\Omega} \A\nabla u \cdot \nabla \varphi \;dx = \int_{\Omega} \varphi \;d\omega,  
\quad \forall\varphi \in  C_{0}^{\infty}(\Omega).
\]
The measure $\omega$ is called the Riesz measure associated with $u$, often 
denoted by $\omega[u]$ (see \cite[Theorem 7.46, Theorem 21.2]{HKM}). 

For a positive $\A$-superharmonic function $u$ in $\Omega$, we shall later use the fact that 
each truncation $u_{k}:= \min(u,k)$, where $k \in \N$, is a positive $\A$-superharmonic 
function of the class $L^{\infty}(\Omega) \cap \dot{W}^{1,2}_{loc}(\Omega)$, 
and its Riesz measure $\omega[u_k]$ is 
locally in the dual of $W^{1,2}(\Omega)$. Moreover, $\omega[u_k] \rightarrow \omega[u]$
weakly in $\Omega$ as $k \rightarrow \infty$, see \cite{HKM, KKT}.
\subsection{Kernels and potentials}
Let $G: \Omega \times \Omega \rightarrow (0, \infty]$ be a positive lower semicontinuous kernel. 
For $\omega \in \mathcal{M}^{+}(\Omega)$, the potential of $\omega$ is defined by 
\[
\g \omega (x) := \int_{\Omega} G(x,y) \;d\omega (y), \quad x \in \Omega.
\]
Observe that $\g\omega(x)$ is lower semi-continuous on $\Omega \times \M^{+}(\Omega)$ if  $G(x,y)$ is lower semi-continuous on $\Omega \times \Omega$, see \cite{Br}.

A positive kernel $G$ on $\Omega \times \Omega$ is said to satisfy the weak maximum principle (WMP) with constant $h \geq 1$ if for any $\omega \in \mathcal{M}^{+}(\Omega)$,
\begin{equation}\label{WMP}
\sup \{ \g\omega(x) : x \in \text{supp}(\omega) \} \leq 1
\Longrightarrow
\sup \{ \g\omega(x) : x \in \Omega \} \leq h.
\end{equation}
Here we use the notation supp$(\omega)$ for the support of $\omega$.

When $h=1$ in \eqref{WMP}, the kernel $G$ is said to satisfy the strong maximum principle.  It holds for Green functions associated with the classical Laplacian $-\Delta$, or more 
generally the linear uniformly elliptic operator in divergence form $\LL$, as well as 
the fractional Laplacian $(-\Delta)^{\alpha}$ in the case $0<\alpha\leq 1$, in every domain 
$\Omega \subset \R^n$ which possesses a Green function. 

The WMP holds for Riesz kernels on $\R^n$ associated with $(-\Delta)^{\alpha}$ in the full range 
$0< \alpha < \frac{n}{2}$, and more generally for all radially nonincreasing kernels on $\R^n$, 
see \cite{AH}.

We say that a positive kernel $G$ on $\Omega \times \Omega$ is quasi-symmetric if there exists 
a constant $a\geq 1$ such that 
\begin{equation}\label{quasi-sym}
a^{-1}G(y,x) \leq G(x,y) \leq a G(y,x), \quad x,y \in \Omega.
\end{equation}
When $a=1$ in \eqref{quasi-sym}, the kernel $G$ is said to be symmetric. 
There are many kernels associated with elliptic operators that are quasi-symmetric and 
satisfy the WMP, see \cite{An}.

We summarize that the Green function $G$ associated with $\LL$ on $\Omega$
is a positive lower semicontinuous symmetric kernel, 
which satisfies the strong maximum principle \cite{K, LSW}.
Further, for $\omega \in \M^{+}(\Omega)$, the Green potential $\g\omega$ is 
either $\A$-superharmonic or identically $+\infty$, in each component of
$\Omega$, see \cite{GH}.
\subsection{Some known results}
We shall need the following weak continuity result established in \cite{TW}.

\begin{Thm} [\cite{TW}] \label{weak_cont}
Suppose $\lbrace u_{j} \rbrace_{j=1}^{\infty}$ is a sequence of positive $\A$-superharmonic functions 
in $\Omega$ such that $u_j \rightarrow u$ a.e. as $j \rightarrow \infty$, where $u$ is an
$\A$-superharmonic function in $\Omega$. Then $\omega[u_{j}]$ converges weakly to $\omega[u]$, that is,
\[
\lim_{j \rightarrow \infty} \int_{\Omega} \varphi \; d\omega[u_j] = \int_{\Omega} \varphi \; 
d\omega [u],
\quad \forall \varphi \in C_{0}^{\infty}(\Omega).
\]
\end{Thm}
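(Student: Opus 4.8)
The plan is to reduce the weak convergence of the Riesz measures to a uniform local Sobolev bound on the $u_j$; because $\LL$ is \emph{linear} in the gradient, this further reduces everything to the \emph{weak} convergence of the gradients, which is much softer than what is needed in the quasilinear situation. Fix $\varphi\in C_0^\infty(\Omega)$ and a ball $B$ with $\operatorname{supp}\,\varphi\subset B\subset\subset\Omega$. By the definition of the Riesz measure and the symmetry of $\A$,
\[
\int_\Omega \varphi\, d\omega[u_j] \;=\; \int_\Omega \A\nabla u_j\cdot\nabla\varphi\, dx \;=\; \int_B \nabla u_j\cdot(\A\nabla\varphi)\, dx,
\]
where $\A\nabla\varphi$ is bounded and supported in $B$. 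Hence it suffices to show $\nabla u_j\rightharpoonup\nabla u$ weakly in $L^s(B)$ for some fixed $s\in\big(1,\tfrac{n}{n-1}\big)$ (possible since $n\ge 2$), and for this it is enough to prove that $\{u_j\}$ is bounded in $W^{1,s}(B')$ for every $B'\subset\subset\Omega$: a bounded sequence in the reflexive space $W^{1,s}(B')$ is weakly sequentially precompact, and every weak limit point of $\{\nabla u_j\}$ must equal $\nabla u$ once we know $u_j\to u$ in $L^1_{loc}(\Omega)$ (established below) and use continuity of distributional differentiation; consequently the whole sequence of gradients converges weakly.

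The crux is therefore the uniform local bound. Here I would use the quantitative interior estimates underlying the qualitative integrability statement recorded above for $\A$-superharmonic functions: for every nonnegative $\A$-superharmonic $v$ on $B_{2r}\subset\subset\Omega$ and every $s\in[1,\tfrac{n}{n-1})$,
\[
\|\nabla v\|_{L^s(B_r)} \;\le\; C(n,m,M,s,r)\int_{B_{2r}} v\, dx,
\]
obtained by Caccioppoli/Moser iteration applied to the truncations $\min(v,k)\in W^{1,2}_{loc}(\Omega)$ followed by $k\to\infty$ (monotone convergence), together with the weak Harnack inequality $\int_{B_{2r}} v\, dx \le C(n,m,M,r)\,\inf_{B_r} v$. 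Using Poincar\'e's inequality to handle $\|v\|_{L^s(B_r)}$ as well, these two bounds control $\|u_j\|_{W^{1,s}(B_r)}$ by a constant multiple of $\inf_{B_r} u_j$. Since $u_j\to u$ a.e. and $u$, being $\A$-superharmonic, is finite a.e., we may fix $x_0\in B_r$ with $u_j(x_0)\to u(x_0)<+\infty$; as $\inf_{B_r} u_j\le u_j(x_0)$ (recall that for the $\A$-superharmonic representative the infimum over an open ball agrees with its essential infimum, so the weak Harnack inequality applies), we get $\sup_j\inf_{B_r} u_j<+\infty$. Positivity of the $u_j$ makes the hypotheses above automatic on every ball, and a covering argument yields $\sup_j\|u_j\|_{W^{1,s}(B')}<+\infty$ for all $B'\subset\subset\Omega$.

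Given this bound, the Rellich--Kondrachov theorem makes $\{u_j\}$ precompact in $L^1_{loc}(\Omega)$, so together with the a.e. convergence its $L^1_{loc}$-limit is $u$; as explained in the first paragraph, it follows that $\nabla u_j\rightharpoonup\nabla u$ weakly in $L^s_{loc}(\Omega)$. Testing against the bounded, compactly supported field $\A\nabla\varphi$ and passing to the limit in the first displayed identity gives
\[
\lim_{j\to\infty}\int_\Omega\varphi\, d\omega[u_j] \;=\; \int_\Omega\A\nabla u\cdot\nabla\varphi\, dx \;=\; \int_\Omega\varphi\, d\omega[u],
\]
which is the assertion. The step I expect to be the main obstacle is the uniform interior gradient estimate: an $\A$-superharmonic function need not lie in $W^{1,2}_{loc}(\Omega)$, so one must go through the truncations and carefully track that all constants depend only on the ellipticity data $m,M$ and the geometry of the balls, not on $u_j$. (With the $p$-Laplacian in place of $\LL$ this is, in outline, the argument of Trudinger and Wang \cite{TW}; there an extra difficulty of upgrading weak to a.e.\ convergence of the gradients appears, caused by the nonlinearity of the operator in $\nabla u$, and it does not arise in the present linear setting.)
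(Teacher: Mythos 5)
The paper does not prove this theorem at all; it cites it directly from Trudinger and Wang [TW], where it is established for general quasilinear operators of $p$-Laplacian type. Your argument is therefore an independent proof for the linear operator $\LL$, and the strategy is correct: linearity of $\LL$ in $\nabla u$ means that $\int_\Omega \varphi\,d\omega[u_j]=\int_\Omega \A\nabla u_j\cdot\nabla\varphi\,dx$ passes to the limit as soon as $\nabla u_j\rightharpoonup\nabla u$ in some $L^s_{loc}$ with $s>1$; you reduce this to a uniform local $W^{1,s}$ bound for $1<s<\tfrac{n}{n-1}$, control that bound via the weak Harnack inequality together with a single point $x_0$ of a.e.\ convergence to a finite value (the identification of pointwise and essential infima for the $\A$-superharmonic representative is exactly the right device), and then identify the weak limit through Rellich compactness and the standard subsequence argument. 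This is considerably softer than what [TW] must do: the nonlinearity of the $p$-Laplacian in $\nabla u$ forces them to upgrade weak to almost everywhere convergence of the gradients, which is the real content of their theorem and is vacuous here.

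The one step that needs more care is the uniform gradient estimate, which you correctly flag as the crux. The plain Caccioppoli inequality for the truncation $u_k=\min(u,k)$, obtained by testing against $\eta^2(k-u_k)$, yields a constant that grows like $k^2$, so monotone convergence in $k$ alone will not deliver $\|\nabla v\|_{L^s(B_r)}\le C\int_{B_{2r}}v\,dx$. The correct route (see HKM, Theorem~7.46, or Kilpel\"ainen--Mal\'y) is to test against $\eta^2\bigl(1-(1+u_k)^{-\varepsilon}\bigr)$, which gives $\int \eta^2|\nabla u_k|^2(1+u_k)^{-1-\varepsilon}\,dx\le C_\varepsilon$ \emph{uniformly in $k$}, and then to apply H\"older's inequality with exponents $\tfrac{2}{s}$ and $\tfrac{2}{2-s}$; this replaces the right-hand side of your estimate by $\|v\|_{L^t(B_{2r})}$ for a suitable $t<\tfrac{n}{n-2}$, after which the weak Harnack inequality at exponent $t$ controls it by $\inf_{B_r}v$ exactly as your argument intends. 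With that substitution the proof is complete and correct.
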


The following theorem provides pointwise estimates for supersolutions 
to sublinear elliptic equations, see \cite[Theorem 1.3]{GV}.
\begin{Thm}[\cite{GV}]\label{Thm:lowerbound}
Let $0<q<1$, $\omega \in \mathcal{M}^{+}(\Omega)$, and let
$G$ be a positive lower semicontinuous kernel on 
$\Omega \times \Omega$, which satisfies the WMP with constant $h \geq 1$.
If $u \in L^{q}_{loc}(\Omega, d\omega)$ is a positive solution to the
integral inequality
\begin{equation}\label{int_ineq}
u \geq \g(u^{q}d\omega) \quad \text{in} \;\; \Omega,
\end{equation}
then 
\begin{equation}\label{lowerbound}
u(x)  \geq (1-q)^{\frac{1}{1-q}} h^{\frac{-q}{1-q}} [ \g \omega (x) ]^{\frac{1}{1-q}}, \quad  \forall x \in \Omega.
\end{equation}
\end{Thm}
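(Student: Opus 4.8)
The plan is to prove the estimate by iterating the integral inequality $u\ge\g(u^{q}\,d\omega)$, the driving mechanism being a pointwise \emph{lower} bound for potentials of powers of $\g\omega$ that follows from a layer‑cake decomposition together with the weak maximum principle. Before iterating I would make two harmless reductions. First, replacing $u$ by $u_{1}:=\g(u^{q}\,d\omega)$: since $u_{1}\le u$ and $f\mapsto\g(f^{q}\,d\omega)$ is monotone, $u_{1}\ge\g(u_{1}^{q}\,d\omega)$, so it suffices to prove the estimate for $u_{1}$, which is moreover lower semicontinuous (being the potential of a nonnegative measure with lower semicontinuous kernel); so I assume $u=\g(u^{q}\,d\omega)$ with $u$ lsc. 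Second, taking a compact exhaustion $K_{j}\uparrow\Omega$ and noting that $u$ still satisfies $u\ge\g(u^{q}\,d(\omega|_{K_{j}}))$ with $u\in L^{q}_{loc}(\Omega,d(\omega|_{K_{j}}))$, it suffices to treat compactly supported $\omega$ and then let $j\to\infty$, since $\g(\omega|_{K_{j}})\uparrow\g\omega$ by monotone convergence.

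The engine of the iteration is the following estimate: for every $\beta>0$ and $\eta\in\M^{+}(\Omega)$,
\[
\g\big((\g\eta)^{\beta}\,d\eta\big)(x)\ \ge\ \frac{\big(\g\eta(x)\big)^{\beta+1}}{h^{\beta}(\beta+1)},\qquad x\in\Omega.
\]
To prove it, write $(\g\eta(y))^{\beta}=\beta\int_{0}^{\infty}t^{\beta-1}\mathbf{1}_{\{\g\eta>t\}}(y)\,dt$ and apply Fubini to get $\g((\g\eta)^{\beta}\,d\eta)(x)=\beta\int_{0}^{\infty}t^{\beta-1}\g(\eta|_{\{\g\eta>t\}})(x)\,dt$. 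For each $t$ the set $\{\g\eta\le t\}$ is closed (since $\g\eta$ is lsc), so on $\text{supp}(\eta|_{\{\g\eta\le t\}})$ one has $\g(\eta|_{\{\g\eta\le t\}})\le\g\eta\le t$; the WMP then forces $\g(\eta|_{\{\g\eta\le t\}})\le ht$ on all of $\Omega$, whence $\g(\eta|_{\{\g\eta>t\}})=\g\eta-\g(\eta|_{\{\g\eta\le t\}})\ge(\g\eta-ht)_{+}$, and integrating $\beta\int_{0}^{\infty}t^{\beta-1}(\g\eta(x)-ht)_{+}\,dt$ gives the claim.

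Now comes the iteration, for compactly supported $\omega$. By lower semicontinuity and positivity, $c_{0}:=\min_{\text{supp}\,\omega}u>0$ (if instead $u\equiv+\infty$ on $\text{supp}\,\omega$ then $u=\g(u^{q}\,d\omega)\equiv+\infty$ and there is nothing to prove), so $u=\g(u^{q}\,d\omega)\ge c_{0}^{q}\,\g\omega$: a bound of the form $u\ge c\,(\g\omega)^{\alpha}$ with $\alpha=1$. Given $u\ge c\,(\g\omega)^{\alpha}$, we have $u^{q}\ge c^{q}(\g\omega)^{\alpha q}$, so by $u\ge\g(u^{q}\,d\omega)$ and the key estimate with $\eta=\omega$, $\beta=\alpha q$,
\[
u\ \ge\ c^{q}\,\g\big((\g\omega)^{\alpha q}\,d\omega\big)\ \ge\ \frac{c^{q}}{h^{\alpha q}(\alpha q+1)}\,(\g\omega)^{\alpha q+1}.
\]
Iterating, the exponents $\alpha_{k}$ converge to the fixed point $\frac{1}{1-q}$ of $\alpha\mapsto\alpha q+1$, and the constants $c_{k}$ converge to a positive limit (independent of $c_{0}$); passing to the limit gives $u(x)\ge\kappa\,(\g\omega(x))^{1/(1-q)}$ for all $x$, with $\kappa>0$ depending only on $q$ and $h$. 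Undoing the second reduction via $\g(\omega|_{K_{j}})\uparrow\g\omega$ finishes the proof.

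The delicate point — the main obstacle — is to control the constant through the iteration and land on the sharp value $(1-q)^{1/(1-q)}h^{-q/(1-q)}$. With the crude estimate above, the $k$‑th stage contributes a factor $h^{-\alpha_{k}q}$, and summing these yields $\kappa=(1-q)^{1/(1-q)}h^{-q/(1-q)^{2}}$, which already equals the stated constant when $h=1$ — in particular for the Green function of $\LL$, which obeys the strong maximum principle. Getting the precise constant for general $h>1$ calls for a sharper form of the key estimate (morally, with $h^{\beta/(\beta+1)}$ in place of $h^{\beta}$) or a direct comparison‑principle argument in place of the iteration; the remaining ingredients — the two reductions, the convergence of the iteration, and the handling of points where $\g\omega=+\infty$ — are routine.
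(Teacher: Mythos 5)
The paper cites this theorem directly from \cite{GV} and does not reprove it, so you should be judged against whether your argument actually establishes the stated bound. Your reductions are sound: passing to $u_1=\g(u^q\,d\omega)\le u$ and then to a compactly supported $\omega$ via exhaustion are both legitimate, and your ``key estimate'' $\g\big((\g\eta)^\beta d\eta\big)\ge (\beta+1)^{-1}h^{-\beta}(\g\eta)^{\beta+1}$, proved by the layer--cake decomposition of $(\g\eta)^\beta$ plus the WMP applied to $\eta|_{\{\g\eta\le t\}}$, is correct; indeed it is nothing other than \eqref{iterated1} from Theorem~\ref{Thm:iterated}(i) with $s=\beta+1$. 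The positivity of $c_0=\min_{\mathrm{supp}\,\omega}u$ via lower semicontinuity on a compact set, and the treatment of the $u\equiv+\infty$ and $\g\omega=+\infty$ cases, are also fine.

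The genuine gap is exactly the one you flag, and it is not cosmetic. Tracking $c_{k+1}=c_k^q/(\alpha_{k+1}h^{\alpha_k q})$ with $\alpha_k=\frac{1-q^{k+1}}{1-q}$, the $h$--exponent accumulates to $-q\sum_{j\ge0}q^j\alpha_j=-\frac{q}{(1-q)^2}$, so your iteration terminates at the constant $(1-q)^{\frac{1}{1-q}}h^{-\frac{q}{(1-q)^2}}$, which is strictly smaller than the claimed $(1-q)^{\frac{1}{1-q}}h^{-\frac{q}{1-q}}$ whenever $h>1$. Your suggested repair---replacing $h^\beta$ by $h^{\beta/(\beta+1)}$ in the key estimate---is not available from the layer--cake computation you give: that computation, $\beta\int_0^{A/h}t^{\beta-1}(A-ht)\,dt=\frac{A^{\beta+1}}{h^\beta(\beta+1)}$, produces $h^{-\beta}$ exactly, and I see no way to squeeze $h^{-\beta/(\beta+1)}$ out of it. Your other alternative, a direct sub/supersolution comparison with $v=c(\g\omega)^{1/(1-q)}$, runs into the very same wall: checking $v\le\g(v^q d\omega)$ again passes through the key estimate and yields $c\le(1-q)^{1/(1-q)}h^{-q/(1-q)^2}$. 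So for $h>1$ the stated constant is not reached by the argument you give, and you have not identified an actual mechanism that would reach it. For $h=1$---in particular for the Green function of $\LL$, which is the only case actually used in this paper---your proof does give the sharp constant and is complete.
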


The following pair of  iterated pointwise inequalities plays an important role in this paper (see \cite[Lemma 2.5]{GV}). 
\begin{Thm}[\cite{GV}]\label{Thm:iterated}
Let $\omega \in \M^{+}(\Omega)$, and let $G$ be a positive lower semicontinuous kernel on 
$\Omega \times \Omega$, which satisfies the WMP with constant $h\geq1$. 
Then the following estimates hold: 
\begin{itemize}
	\item[(i)]If $s\geq1$, then   
	\begin{equation}\label{iterated1}
(\g\omega)^{s}(x) \leq s h^{s-1} \, \g \left( (\g\omega)^{s-1} d\omega \right)(x), \quad  \forall x \in \Omega. 
	\end{equation}
	\item[(ii)] If $0<s\leq1$, then
	\begin{equation}\label{iterated2}
(\g\omega)^{s}(x) \geq s h^{s-1} \, \g \left( (\g\omega)^{s-1} d\omega \right)(x), \quad  \forall x \in \Omega.
	\end{equation}
\end{itemize}
\end{Thm}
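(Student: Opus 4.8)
The plan is to derive both estimates from one mechanism: a ``layer-cake'' (distribution function) representation of the power $u^{s-1}$, where $u:=\g\omega$, together with a bound on the potential of a suitable restriction of $\omega$ that the weak maximum principle supplies. The case $s=1$ is trivial in both parts, since the asserted inequality then reads $\g\omega=\g\omega$; so I assume $s>1$ in (i) and $0<s<1$ in (ii). One may also assume $\g\omega$ is not identically $+\infty$ on the component of $\Omega$ containing the point $x$ (otherwise the estimate below still forces both sides of (i) to be $+\infty$, and (ii) is immediate).

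\textbf{The key step} is the following consequence of the WMP. Since $G$ is lower semicontinuous, so is $u=\g\omega$; hence for each $M>0$ the set $\{u\le M\}$ is closed, and the restriction $\nu_M:=\omega|_{\{u\le M\}}$ satisfies $\operatorname{supp}\nu_M\subseteq\{u\le M\}$ together with $\g\nu_M\le\g\omega=u\le M$ on $\operatorname{supp}\nu_M$. Applying the WMP to the normalized measure $M^{-1}\nu_M$ (which has the same support) and rescaling yields
\[
\g\bigl(\omega|_{\{u\le M\}}\bigr)(x)\le hM \qquad\text{for every }x\in\Omega.
\]
Because $\overline{\{u<M\}}\subseteq\{u\le M\}$, again by lower semicontinuity, the same bound holds for $\omega|_{\{u<M\}}$. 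Subtracting these from $\g\omega(x)=u(x)$ and using $\g(\omega|_E)\le\g\omega$ gives, for all $x\in\Omega$ and $M>0$,
\begin{align*}
\g\bigl(\omega|_{\{u>M\}}\bigr)(x) &\ge \bigl(u(x)-hM\bigr)_{+},\\
\g\bigl(\omega|_{\{u<M\}}\bigr)(x) &\le \min\{hM,\;u(x)\}.
\end{align*}

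\textbf{For part (i)} ($s>1$) I would substitute $t^{s-1}=(s-1)\int_0^{\infty}M^{s-2}\mathbf{1}_{\{M<t\}}\,dM$ at $t=u(y)$ and interchange the order of integration (Tonelli, all integrands nonnegative) to get
\[
\g\bigl(u^{s-1}d\omega\bigr)(x)
=(s-1)\int_0^{\infty}M^{s-2}\,\g\bigl(\omega|_{\{u>M\}}\bigr)(x)\,dM
\ \ge\ (s-1)\int_0^{u(x)/h}M^{s-2}\bigl(u(x)-hM\bigr)\,dM ,
\]
and the final integral is elementary, equal to $\tfrac1s\,h^{1-s}u(x)^{s}$; rearranging gives \eqref{iterated1}. \textbf{For part (ii)} ($0<s<1$) the same scheme, now with $t^{s-1}=(1-s)\int_0^{\infty}M^{s-2}\mathbf{1}_{\{M>t\}}\,dM$, gives
\[
\g\bigl(u^{s-1}d\omega\bigr)(x)
=(1-s)\int_0^{\infty}M^{s-2}\,\g\bigl(\omega|_{\{u<M\}}\bigr)(x)\,dM
\ \le\ (1-s)\int_0^{\infty}M^{s-2}\min\{hM,u(x)\}\,dM
=\tfrac1s\,h^{1-s}u(x)^{s},
\]
which rearranges to \eqref{iterated2}. \textbf{The main obstacle} — really a matter of care rather than of depth — is to apply the WMP to the truncated measures $\omega|_{\{u\le M\}}$ rather than to $\omega$ itself (this is where lower semicontinuity of $G$ is used, to locate the supports) and then to integrate the resulting two-sided control against the weight $M^{s-2}\,dM$; note this weight is integrable near $0$ exactly when $s>1$ and near $\infty$ exactly when $s<1$, which is precisely why the two ranges call for the two complementary forms of the layer-cake identity.
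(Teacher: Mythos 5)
The paper does not prove this lemma; it is cited from Grigor'yan--Verbitsky \cite{GV} (Lemma 2.5). Your argument is correct and, in fact, reproduces the essential mechanism of the original proof: restrict $\omega$ to level sets of $u=\g\omega$, observe that lower semicontinuity of $u$ puts the support of $\omega|_{\{u\le M\}}$ inside $\{u\le M\}$ so that the WMP gives $\g(\omega|_{\{u\le M\}})\le hM$ globally, and then integrate this two-sided control against the weight coming from the layer-cake representation of $t^{s-1}$. The integral evaluations $(s-1)\int_0^{u(x)/h}M^{s-2}(u(x)-hM)\,dM=\tfrac1s h^{1-s}u(x)^s$ and $(1-s)\int_0^\infty M^{s-2}\min\{hM,u(x)\}\,dM=\tfrac1s h^{1-s}u(x)^s$ are both right, Tonelli applies since all integrands are nonnegative, and the edge cases $s=1$, $\omega\equiv 0$, and $u(x)=+\infty$ are handled. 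This matches the approach one finds in \cite{GV}.
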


Our argument also relies on the following result established in \cite[Theorem 1.1]{V}, which explicitly characterizes 
$(p,r)$-weighted norm inequalities
\begin{equation}\label{weighted_norm_ineq}
\big\| \g(f d\omega) \big\|_{L^{r}(\Omega,\,d\omega)} \leq C \| f \|_{L^{p}(\Omega,\,d\omega)}, 
\quad \forall  f \in L^{p}(\Omega, d\omega),
\end{equation}
where $C$ is a positive constant independent of $f$, in the case $0< r < p$ and $1< p < \infty$, for 
arbitrary $\omega \in \mathcal{M}^{+}(\Omega)$, under certain assumptions on 
the kernel $G$.

\begin{Thm}[\cite{V}]\label{thm_Ver17}
Let $\omega \in \mathcal{M}^{+}(\Omega)$ with $\omega \not\equiv 0$, and let $G$ be a positive 
quasi-symmetric lower semicontinuous kernel on $\Omega \times \Omega$, which satisfies the WMP.
\begin{itemize}
\item[(i)] If $1 < p < \infty$ and $0 < r < p$, then the $(p,r)$-weighted norm inequality 
\eqref{weighted_norm_ineq} holds if and only if 
\begin{equation}\label{energy1}
\g\omega \in L^{\frac{pr}{p-r}}(\Omega, d\omega).
\end{equation}

\item[(ii)] If $0 < q < 1$ and $q<r<\infty$, then there exists a positive solution 
$u \in L^{r}(\Omega, d\omega)$ to the integral inequality \eqref{int_ineq}
if and only if the weighted norm inequality \eqref{weighted_norm_ineq} holds with 
$p = \frac{r}{q}$, that is, 
\begin{equation}\label{weighted_norm_ineq_special}
\big\| \g(f d\omega) \big\|_{L^{r}(\Omega,\;d\omega)} 
\leq C \| f \|_{L^{\frac{r}{q}}(\Omega,\;d\omega)}, 
\quad \forall f \in L^{\frac{r}{q}}(\Omega,d\omega),
\end{equation}
where $C$ is a positive constant independent of $f$; or equivalently,
\begin{equation}\label{energy2}
\g\omega \in L^{\frac{r}{1-q}}(\Omega, d\omega).
\end{equation}
\end{itemize}
\end{Thm}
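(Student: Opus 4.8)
\emph{Overview of the plan.} The plan is to prove (i) in full and then to read off (ii) from (i) together with the pointwise bound of Theorem~\ref{Thm:lowerbound}. Put $s:=\tfrac{pr}{p-r}$, so that $\tfrac1r=\tfrac1s+\tfrac1p$ (hence $s>r$) and the elementary identity $\tfrac{r-1}{r}+\tfrac1p+\tfrac1s=1$ holds; with this notation \eqref{energy1} reads $\g\omega\in L^{s}(\Omega,d\omega)$.

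\emph{Sufficiency in (i).} Assume $\g\omega\in L^{s}(\Omega,d\omega)$; then $\g\omega<\infty$ $\omega$-a.e. By homogeneity and a routine truncation (replace $\omega$ by its restriction to $\{\g\omega\le N\}\cap K$, $K$ compact, which makes the quantities below finite, and let $N,K\to\infty$) it suffices to prove \eqref{weighted_norm_ineq} for $0\le f$ bounded with compact support, so that $v:=\g(fd\omega)\in L^{r}(\Omega,d\omega)$. When $1\le r<p$, I would apply the iterated inequality \eqref{iterated1} with exponent $r$ to $v$, integrate $d\omega$, use Fubini and the quasi-symmetry \eqref{quasi-sym} to replace $\int_{\Omega}G(x,y)\,d\omega(x)$ by $\le a\,\g\omega(y)$, and then apply H\"older's inequality to the resulting integral $\int_{\Omega}v^{\,r-1}f\,\g\omega\,d\omega$ with the conjugate exponents $\tfrac{r}{r-1},\,p,\,s$; this yields
\[
\|v\|_{L^{r}(d\omega)}^{\,r}\;\le\;a\,r\,h^{\,r-1}\,\|v\|_{L^{r}(d\omega)}^{\,r-1}\,\|f\|_{L^{p}(d\omega)}\,\|\g\omega\|_{L^{s}(d\omega)},
\]
and dividing by $\|v\|_{L^{r}(d\omega)}^{\,r-1}$ gives \eqref{weighted_norm_ineq} with $C=a\,r\,h^{\,r-1}\|\g\omega\|_{L^{s}(d\omega)}$. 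For the remaining range $0<r<1$ the iterated inequality is not directly applicable (one would have to raise $v$ to the negative power $r-1$ inside $\g$); here an additional argument is needed, e.g. a Maurey--Nikishin-type factorization reducing \eqref{weighted_norm_ineq} to a weighted $L^{p}\to L^{p}$ estimate $\|w^{-1}\g(fd\omega)\|_{L^{p}(d\omega)}\le C\|f\|_{L^{p}(d\omega)}$ for a suitable $0\le w\in L^{s}(\Omega,d\omega)$ built from $\g\omega$, the latter then being analysed by the same tools with the admissible exponent $p>1$. This is the step I expect to be most delicate.

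\emph{Necessity in (i).} Assume \eqref{weighted_norm_ineq}; a short preliminary argument shows it forces $\g\omega<\infty$ $\omega$-a.e., so we may treat $\g\omega$ as a genuine potential. For $N\in\N$ set $F_{N}:=\{x\in\Omega:\g\omega(x)\le N\}$, which is \emph{closed} because $\g\omega$ is lower semicontinuous, and for compact $K$ put $\omega_{N,K}:=\omega|_{F_{N}\cap K}$. Since $\g\omega_{N,K}\le\g\omega\le N$ on $\operatorname{supp}(\omega_{N,K})\subseteq F_{N}$, the weak maximum principle \eqref{WMP} gives $\g\omega_{N,K}\le hN$ on all of $\Omega$, so $\g\omega_{N,K}$ is bounded and $\omega_{N,K}$ is compactly supported. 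Now choose $\tau:=\tfrac{r}{p-r}>0$, so that $\tau p=(\tau+1)r=s$ and $\tau+1\ge1$, and test \eqref{weighted_norm_ineq} with $f:=(\g\omega_{N,K})^{\tau}\chi_{F_{N}\cap K}\in L^{p}(\Omega,d\omega)$. The right-hand side equals $C\,I_{N,K}^{1/p}$, where $I_{N,K}:=\int_{\Omega}(\g\omega_{N,K})^{s}\,d\omega_{N,K}<\infty$; on the left, restricting the outer integral to $F_{N}\cap K$ and noting that for $x$ there one has $\g(fd\omega)(x)=\g\!\bigl((\g\omega_{N,K})^{\tau}d\omega_{N,K}\bigr)(x)\ge\tfrac{1}{(\tau+1)h^{\tau}}(\g\omega_{N,K}(x))^{\tau+1}$ by \eqref{iterated1}, one obtains a lower bound $\ge\bigl((\tau+1)h^{\tau}\bigr)^{-r}I_{N,K}$. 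Hence $I_{N,K}\le\bigl((\tau+1)h^{\tau}\bigr)^{r}C^{r}\,I_{N,K}^{r/p}$, i.e. $I_{N,K}\le\bigl(C(\tau+1)h^{\tau}\bigr)^{s}$ uniformly in $N,K$. Letting $K\uparrow\Omega$ and $N\uparrow\infty$ and using monotone convergence ($\g\omega_{N,K}\uparrow\g\omega$, $\omega_{N,K}\uparrow\omega$) gives $\int_{\Omega}(\g\omega)^{s}\,d\omega\le\bigl(C(\tau+1)h^{\tau}\bigr)^{s}<\infty$, which is \eqref{energy1}.

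\emph{Deduction of (ii); main obstacle.} If $u\in L^{r}(\Omega,d\omega)$ is a positive solution of \eqref{int_ineq}, then \eqref{lowerbound} gives $u\ge c_{0}(\g\omega)^{1/(1-q)}$, whence $\g\omega\in L^{r/(1-q)}(\Omega,d\omega)$, i.e. \eqref{energy2}; and \eqref{energy2} is exactly condition \eqref{energy1} of part (i) for the exponent pair $\bigl(\tfrac rq,r\bigr)$ (one has $1<\tfrac rq<\infty$, $r<\tfrac rq$, and $\tfrac{(r/q)r}{(r/q)-r}=\tfrac{r}{1-q}$), so by part (i) the inequality \eqref{weighted_norm_ineq_special} holds --- which settles the equivalences of (ii) apart from the existence statement. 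Conversely, assuming \eqref{energy2} (equivalently \eqref{weighted_norm_ineq_special}, by part (i)), I would build a solution by monotone iteration: \eqref{iterated1} with exponent $\tfrac1{1-q}>1$ shows that $W:=c_{1}(\g\omega)^{1/(1-q)}$ is, for a suitably small $c_{1}=c_{1}(q,h)$, a positive subsolution $W\le\g(W^{q}d\omega)$ with $W\in L^{r}(\Omega,d\omega)$ by \eqref{energy2}; setting $u_{0}:=W$, $u_{j+1}:=\g(u_{j}^{q}d\omega)$ produces a nondecreasing sequence for which \eqref{weighted_norm_ineq_special} gives $\|u_{j+1}\|_{L^{r}(d\omega)}\le C\|u_{j}\|_{L^{r}(d\omega)}^{\,q}$, hence (as $q<1$) $\sup_{j}\|u_{j}\|_{L^{r}(d\omega)}<\infty$; then $u:=\lim_{j}u_{j}$ is, by monotone convergence, a positive function in $L^{r}(\Omega,d\omega)$ with $u=\g(u^{q}d\omega)$, hence a positive solution of \eqref{int_ineq}. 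I expect the genuine difficulty to be confined to part (i) --- the sufficiency for $0<r<1$, where one must route through the factorization step and produce the auxiliary weight, and, to a lesser extent, the careful truncations in the necessity direction; the whole of (ii) is then a formal consequence of part (i), Theorem~\ref{Thm:lowerbound}, and Theorem~\ref{Thm:iterated}.
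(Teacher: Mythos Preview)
The paper does not prove this theorem at all: Theorem~\ref{thm_Ver17} is quoted verbatim from the external reference \cite{V} and is used only as a black box (to supply the equivalence between \eqref{cond_sigma} and the weighted norm inequality \eqref{weighted_norm_ineq_special0}, and in the discussion of $\E_{\gamma}$ at the start of Sec.~\ref{Sect:Energy}). There is therefore no ``paper's own proof'' to compare your attempt against.

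That said, a brief assessment of your sketch on its own merits: your necessity argument in (i) is sound --- testing \eqref{weighted_norm_ineq} on $f=(\g\omega_{N,K})^{\tau}\chi_{F_{N}\cap K}$ with $\tau=\tfrac{r}{p-r}$, invoking \eqref{iterated1} with exponent $\tau+1$, and passing to the limit is exactly the right mechanism. Your sufficiency argument for $1\le r<p$ (iterate \eqref{iterated1} on $v=\g(fd\omega)$, Fubini plus quasi-symmetry, then three-term H\"older with exponents $\tfrac{r}{r-1},p,s$) is also correct and is essentially the argument in \cite{V}. The genuine gap you yourself flag is the range $0<r<1$: here your proposed Maurey--Nikishin factorization is only a slogan, not a proof, and this case does require a separate idea (in \cite{V} it is handled by reducing to a weak-type characterization together with a Schur-type test adapted to the WMP; the factorization route can also be made to work but is not a one-liner). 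Your deduction of (ii) from (i), Theorem~\ref{Thm:lowerbound}, and the monotone iteration $u_{j+1}=\g(u_{j}^{q}d\omega)$ starting from a small multiple of $(\g\omega)^{1/(1-q)}$ is correct and is in fact the construction the present paper invokes (see the homogeneous case in the proof of Theorem~\ref{thm_int}).
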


The following  inequalities are often used in the 
theory of Schr\"{o}dinger operators and potential theory.
They can be found, for example, in \cite[Theorem 7.48]{HKM} and 
\cite[Proposition 1.5]{JMV}, respectively.

\begin{Thm}[\cite{HKM, JMV}]\label{ineq-a-b}
Let $\omega \in \M^{+}(\Omega)$ with $\omega \not\equiv 0$, 
and let $G$ be a positive Green function associated with $\LL$ on $\Omega$. 
Suppose $u:=\g\omega$ so that $u\not\equiv+\infty$. Then
there exists a positive constant $C$ which depends only on the ellipticity constants $m, M$ such that 
\begin{equation}\label{ineq-a}
\int_\Omega |\varphi|^2   \, \frac{|\nabla u|^2}{u^2} d x \le 
C \,  \int_\Omega |\nabla \varphi|^2  \;dx
\end{equation}
and
\begin{equation}\label{ineq-b}
 \int_\Omega |\varphi|^2   \, \frac{d \omega}{u}  \le 
C \int_\Omega |\nabla \varphi|^2  \;dx,
\end{equation}
for all (quasicontinuous representatives of) $\varphi \in \dot{W}_{0}^{1,2}(\Omega)$. 
\end{Thm}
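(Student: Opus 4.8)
The goal is to prove the two inequalities \eqref{ineq-a} and \eqref{ineq-b} for $u = \g\omega$ a Green potential and $\varphi \in \dot W^{1,2}_0(\Omega)$. The plan is to establish \eqref{ineq-b} first and then deduce \eqref{ineq-a} from it, since the ellipticity bound $m|\nabla u|^2 \le \A\nabla u \cdot \nabla u$ together with an integration by parts will convert a gradient estimate into a measure estimate. Both reduce to a single computation once we test the equation $\LL u = \omega$ against a suitable function.

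First I would reduce to the case where $\varphi \in C_0^\infty(\Omega)$ by density, and replace $u$ by its truncation $u_k = \min(u,k)$, which by the facts recalled in the preliminaries is a positive bounded $\A$-superharmonic function in $\dot W^{1,2}_{\mathrm{loc}}(\Omega)$ whose Riesz measure $\omega[u_k]$ is locally in the dual of $W^{1,2}$, so all pairings below make sense; at the end I let $k \to \infty$ using $\omega[u_k] \rightharpoonup \omega[u] = \omega$ weakly and Fatou's lemma. With this in place, the key step is to test $\LL u_k = \omega[u_k]$ against the test function $\psi := \varphi^2 / u_k$ (valid since $u_k$ is bounded below on $\mathrm{supp}\,\varphi$ — more precisely one uses $\varphi^2/(u_k + \varepsilon)$ and lets $\varepsilon \to 0$), which gives
\begin{equation*}
\int_\Omega \frac{\varphi^2}{u_k}\, d\omega[u_k] = \int_\Omega \A\nabla u_k \cdot \nabla\!\left(\frac{\varphi^2}{u_k}\right) dx = \int_\Omega \frac{2\varphi}{u_k}\, \A\nabla u_k \cdot \nabla\varphi\, dx - \int_\Omega \frac{\varphi^2}{u_k^2}\, \A\nabla u_k \cdot \nabla u_k\, dx.
\end{equation*}
The quadratic term on the right is exactly (a constant multiple of the quantity controlling) $\int |\varphi|^2 |\nabla u_k|^2/u_k^2\, dx$ by ellipticity, and the cross term is absorbed by Cauchy--Schwarz in the form $\frac{2\varphi}{u_k}\,\A\nabla u_k\cdot\nabla\varphi \le \frac12 \frac{\varphi^2}{u_k^2}\A\nabla u_k\cdot\nabla u_k + 2\,\A\nabla\varphi\cdot\nabla\varphi$, leaving
\begin{equation*}
\int_\Omega \frac{\varphi^2}{u_k}\, d\omega[u_k] + \frac12 \int_\Omega \frac{\varphi^2}{u_k^2}\, \A\nabla u_k \cdot \nabla u_k\, dx \le 2\int_\Omega \A\nabla\varphi\cdot\nabla\varphi\, dx \le 2M \int_\Omega |\nabla\varphi|^2\, dx.
\end{equation*}
Since both terms on the left are nonnegative, this yields at once $\int_\Omega \frac{\varphi^2}{u_k^2}\,\A\nabla u_k\cdot\nabla u_k\, dx \le 4M\int_\Omega|\nabla\varphi|^2 dx$, hence $\int_\Omega \varphi^2 |\nabla u_k|^2/u_k^2\, dx \le \frac{4M}{m}\int_\Omega|\nabla\varphi|^2 dx$, and similarly $\int_\Omega \frac{\varphi^2}{u_k}\,d\omega[u_k] \le 2M\int_\Omega|\nabla\varphi|^2 dx$.

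To finish, I pass to the limit $k \to \infty$. For \eqref{ineq-b}, weak convergence $\omega[u_k] \rightharpoonup \omega$ handles a compactly supported nonnegative continuous integrand $\varphi^2/u$ (with the minor technical point that $u$ is only lower semicontinuous, so one first restricts to $\varphi$ supported where $u < \infty$ — using that $u \not\equiv +\infty$ so $u$ is finite q.e. and a.e. — and approximates $\varphi^2/u$ from below by continuous functions, or applies the weak continuity/lower semicontinuity argument of \cite{TW, HKM} directly); for \eqref{ineq-a}, note $u_k \uparrow u$ and $\nabla u_k = \nabla u \cdot \mathbf{1}_{\{u < k\}}$ a.e., so $\varphi^2|\nabla u_k|^2/u_k^2 \to \varphi^2|\nabla u|^2/u^2$ a.e. and Fatou's lemma gives the bound for $u$ with the same constant. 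The absolute constant $2$ in Cauchy--Schwarz is not optimal and can be chosen as $1+\delta$, but any fixed constant suffices since we only need $C = C(m,M)$.

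The main obstacle I anticipate is the \emph{justification of the test function} $\varphi^2/u_k$ and the passage to the limit: $u_k$ is bounded but not necessarily bounded away from zero, so one must regularize ($u_k + \varepsilon$), verify that $\varphi^2/(u_k+\varepsilon) \in \dot W^{1,2}_0(\Omega)$ is an admissible test function against $\omega[u_k]$ (which is where "$\omega[u_k]$ locally in the dual of $W^{1,2}$" is used), let $\varepsilon \to 0$ by monotone/dominated convergence, and only then let $k \to \infty$ against the possibly-infinite limit measure $\omega$; the interchange of the two limits and the lower-semicontinuity of $u$ on the support of $\varphi$ are the delicate points. Everything else is the elementary Cauchy--Schwarz absorption displayed above.
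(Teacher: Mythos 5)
The paper does not prove this theorem at all: it is stated as a citation, with \eqref{ineq-a} attributed to \cite{HKM}, Theorem~7.48 (the logarithmic Caccioppoli estimate for positive supersolutions, applied with $p=2$ so that $\nabla\log u = \nabla u/u$), and \eqref{ineq-b} to \cite{JMV}, Proposition~1.5. Your core computation --- test $\LL u_k=\omega[u_k]$ against $\varphi^2/(u_k+\varepsilon)$, expand, absorb the cross term by Cauchy--Schwarz, and keep both nonnegative terms on the left --- is exactly the standard argument behind those references, and the regularization by $\varepsilon$ together with the fact that $\omega[u_k]$ is locally in $(W^{1,2})^*$ makes the pairing legitimate. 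The passage $k\to\infty$ for \eqref{ineq-a} is also fine: $\nabla u_k=\nabla u\,\mathbf{1}_{\{u<k\}}$ a.e.\ and $u_k=u$ on $\{u<k\}$, so $\varphi^2|\nabla u_k|^2/u_k^2$ increases a.e.\ to $\varphi^2|\nabla u|^2/u^2$ and monotone convergence (not even Fatou) closes it.

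The gap is in the $k\to\infty$ step for \eqref{ineq-b}. Since $u$ is a positive lower semicontinuous function, $1/u$ and hence $\varphi^2/u$ is \emph{upper} semicontinuous, not lower semicontinuous. Consequently you cannot ``approximate $\varphi^2/u$ from below by continuous functions'' --- that device is available precisely for lsc functions --- and the Portmanteau inequality for weak convergence $\omega[u_k]\rightharpoonup\omega$ against an usc compactly supported integrand gives $\limsup_k\int(\varphi^2/u)\,d\omega[u_k]\le\int(\varphi^2/u)\,d\omega$, which is the wrong direction for extracting a bound on $\int(\varphi^2/u)\,d\omega$. The same problem persists if you replace $\varphi^2/u$ by $\varphi^2/u_j$ or by $\min(\varphi^2/u,N)$, all of which remain usc. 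What actually closes the argument is not weak convergence at all, but the structure of the Riesz measure of a truncation (cf.\ \cite{KKT}): $\omega[u_k]$ restricted to $\{u<k\}$ coincides with $\omega$ restricted to $\{u<k\}$, and $u_k=u$ there, so
\[
\int\frac{\varphi^2}{u_k}\,d\omega[u_k]\;\ge\;\int_{\{u<k\}}\frac{\varphi^2}{u}\,d\omega\;\uparrow\;\int\frac{\varphi^2}{u}\,d\omega
\]
by monotone convergence (using $u<\infty$ $d\omega$-a.e.\ up to the set where $1/u=0$). With this replacement the proof is complete; without it, the weak-convergence route as written does not reach \eqref{ineq-b}.
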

\section{Generalized energy of measures}\label{Sect:Energy}
Let $\gamma >0$ and $\omega \in \M^{+}(\Omega)$, and let 
$G$ be a positive lower semicontinuous kernel on $\Omega \times \Omega$.
Define the $\gamma$-energy  
 of  $\omega$ by 
\begin{equation}\label{Gamma-E}
\E_{\gamma}[\omega] := \int_{\Omega} (\g\omega)^{\gamma} \; d\omega.
\end{equation}
In the case  $\gamma =1$, we use the notation $\E[\omega]:= \E_{1}[\omega]$.
Observe that $\E_{\gamma}[\omega]$ is well-defined, even though it may be infinite. 

When $G$ is a quasi-symmetric  kernel on 
$\Omega \times \Omega$ which satisfies the WMP, by
the definition of $\E_{\gamma}[\omega]$ and Theorem \ref{thm_Ver17} with 
$ r:= \frac{\gamma (1+\gamma)}{1+\gamma+\gamma^{2}} $
and $q:=\frac{\gamma^{2}}{1+\gamma+\gamma^{2}}$,
the following statements are equivalent:
\begin{itemize}
\item[(a)] $\E_{\gamma}[\omega] < + \infty $.
\item[(b)] $\g\omega \in L^{\gamma}(\Omega, d\omega)$.
\item[(c)] The weighted norm inequality \eqref{weighted_norm_ineq_special} is valid.
\item[(d)] There exists a positive solution $u \in L^{r}(\Omega, d\omega)$
to \eqref{int_ineq}.
\end{itemize}

There is also a similar characterization of $\E_{\gamma}[\omega]$, in terms of weak-type and 
strong-type inequalities, for nondegenerate kernels $G \ge 0$, see \cite{QV}.

We now consider 
 $\E_{\gamma}[\omega]$ in the case where $G$ is a positive Green function  
associated with $\LL$ on $\Omega$. 

\begin{Thm}\label{Thm:Energy}
Let $\gamma >0$ and $\omega \in \M^{+}(\Omega)$ with $\omega \not\equiv 0$,
and let $G$ be a positive Green function associated with $\LL$ on $\Omega$.
If $u:=\g\omega$ then the condition
\begin{equation}\label{gamma-energy-green}
\int_{\Omega} \left( \A\nabla u \cdot \nabla u \right) 
u^{\gamma -1}\;dx < +\infty
\end{equation}
is equivalent to $u^{\frac{\gamma +1}{2}} \in \dot{W}^{1,2}_{0}(\Omega)$ as well as
$(a)$, $(b)$, $(c)$ and $(d)$ above. In this case, we have
\begin{equation}\label{formula:Energy}
\E_{\gamma}[\omega] 
= \gamma \int_{\Omega} \left( \A\nabla u \cdot \nabla u \right) 
u^{\gamma -1}\;dx.
\end{equation}
\end{Thm}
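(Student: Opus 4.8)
\section*{Proof proposal for Theorem \ref{Thm:Energy}}

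The plan is to prove the equivalence in two stages, first reducing everything to the already-established equivalences via the identity \eqref{formula:Energy}, and then establishing that identity through a truncation-and-integration-by-parts argument. Since $G$ is a Green function associated with $\LL$, it is a symmetric lower semicontinuous kernel satisfying the (strong) maximum principle, so Theorem \ref{thm_Ver17} applies with the choice $r=\frac{\gamma(1+\gamma)}{1+\gamma+\gamma^{2}}$, $q=\frac{\gamma^{2}}{1+\gamma+\gamma^{2}}$; this gives us the equivalence of (a), (b), (c), (d) for free. It therefore suffices to show that \eqref{gamma-energy-green} is equivalent to (b), i.e.\ to $\E_{\gamma}[\omega]=\int_{\Omega}u^{\gamma}\,d\omega<\infty$, and simultaneously to $u^{\frac{\gamma+1}{2}}\in\dot W^{1,2}_{0}(\Omega)$. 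The bridge for both is the chain rule computation $|\nabla(u^{\frac{\gamma+1}{2}})|^{2}=\bigl(\frac{\gamma+1}{2}\bigr)^{2}u^{\gamma-1}|\nabla u|^{2}$, which by uniform ellipticity \eqref{cond:ellipticity} shows $u^{\frac{\gamma+1}{2}}\in\dot W^{1,2}_{0}(\Omega)$ is equivalent to finiteness of $\int_{\Omega}(\A\nabla u\cdot\nabla u)u^{\gamma-1}\,dx$ (modulo justifying that $u^{\frac{\gamma+1}{2}}$ is actually the $\dot W^{1,2}_{0}$-limit of smooth functions, not merely of finite Dirichlet energy). So the real content is the integration-by-parts formula \eqref{formula:Energy}.

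To prove \eqref{formula:Energy}, I would work with the truncations $u_{k}:=\min(u,k)$, which by the facts recalled in Section \ref{Sect:Prelim} are bounded positive $\A$-superharmonic functions in $\dot W^{1,2}_{loc}(\Omega)$ with Riesz measures $\omega[u_{k}]$ locally in the dual of $W^{1,2}(\Omega)$, and $\omega[u_{k}]\to\omega$ weakly. For bounded $u_{k}$ one can legitimately use $u_{k}^{\gamma}$ (or a further truncation/mollification thereof) as a test function against $\LL u_{k}=\omega[u_{k}]$: this yields, after the chain rule, an identity of the shape $\int_{\Omega}u_{k}^{\gamma}\,d\omega[u_{k}]=\gamma\int_{\{u\le k\}}(\A\nabla u\cdot\nabla u)u^{\gamma-1}\,dx$, using $\nabla u_{k}=\nabla u\cdot\mathbf 1_{\{u<k\}}$ and that $\nabla u=0$ a.e.\ on $\{u=k\}$. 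I would then pass to the limit $k\to\infty$: the right-hand side converges by monotone convergence to $\gamma\int_{\Omega}(\A\nabla u\cdot\nabla u)u^{\gamma-1}\,dx$, and on the left one combines the weak convergence $\omega[u_{k}]\to\omega$ with the monotone convergence $u_{k}^{\gamma}\uparrow u^{\gamma}$, being careful because the test function is itself varying with $k$. One handles both directions of the equality (finite or infinite) so as to conclude \eqref{formula:Energy} unconditionally, and then in particular \eqref{gamma-energy-green} $\Leftrightarrow$ (b).

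I expect the main obstacle to be the limiting argument on the left-hand side $\int_{\Omega}u_{k}^{\gamma}\,d\omega[u_{k}]\to\int_{\Omega}u^{\gamma}\,d\omega$, since the integrand and the measure both vary with $k$ and $u^{\gamma}$ is not compactly supported nor bounded. The clean way around this is a two-sided estimate: for the lower bound on $\liminf$, fix $m\le k$ and use that $u_{m}^{\gamma}\le u_{k}^{\gamma}$ is bounded and lower semicontinuous, combined with weak convergence of $\omega[u_{k}]$, to get $\liminf_{k}\int u_{k}^{\gamma}\,d\omega[u_{k}]\ge\int u_{m}^{\gamma}\,d\omega$, then send $m\to\infty$; for the upper bound one exploits that $u_{k}=\g\omega[u_{k}]\le\g\omega=u$ together with the monotonicity $\omega[u_{k}]\le\omega[u_{k+1}]$ in a suitable sense, or alternatively invokes the already-known equivalence (a)$\Leftrightarrow$(d) from Theorem \ref{thm_Ver17} to get the a priori bound $\int u_{k}^{\gamma}\,d\omega[u_{k}]\le\int u^{\gamma}\,d\omega$ directly when the latter is finite, and treats the $\E_{\gamma}[\omega]=\infty$ case by contradiction. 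A secondary, more technical point is justifying the integration by parts for the bounded superharmonic $u_{k}$ with the non-Lipschitz nonlinearity $t\mapsto t^{\gamma}$ when $0<\gamma<1$: here one first tests with $(u_{k}+\varepsilon)^{\gamma}-\varepsilon^{\gamma}$ or with $\min(u_{k},j)^{\gamma}$, which are admissible, and removes the regularization at the end. I would also double-check, as noted above, that finite generalized Dirichlet energy of $u$ upgrades to $u^{\frac{\gamma+1}{2}}\in\dot W^{1,2}_{0}(\Omega)$ with the correct boundary behavior, which follows since each $u_{k}^{\frac{\gamma+1}{2}}$ lies in $\dot W^{1,2}_{0}(\Omega)$ (as a bounded Green potential) and these approximate $u^{\frac{\gamma+1}{2}}$ in the $\dot W^{1,2}_{0}$-seminorm.
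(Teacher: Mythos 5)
Your reduction to Theorem \ref{thm_Ver17} and the chain-rule identification $|\nabla(u^{\frac{\gamma+1}{2}})|^2=\bigl(\tfrac{\gamma+1}{2}\bigr)^2 u^{\gamma-1}|\nabla u|^2$ match the paper. But the core of your plan — prove \eqref{formula:Energy} unconditionally in $[0,+\infty]$ by testing $\LL u_k=\omega[u_k]$ against $u_k^\gamma$, then read off all equivalences — diverges from the paper's route and, as sketched, does not close. The paper (Lemmas \ref{lemma2} and \ref{Lem_identity}) deliberately separates the two tasks: it first establishes the equivalence of finiteness by entirely different means, and only then proves the identity \eqref{formula:Energy} under the hypothesis $\E_\gamma[\omega]<\infty$. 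That ordering is not cosmetic. The error term in the truncated integration by parts is controlled by
\[
\Bigl|\int (u_k^{(j)})^\gamma (\A\nabla u_k^{(j)}\cdot\nabla\varphi_j)\,dx\Bigr|
\lesssim \frac{k^{\frac{\gamma+1}{2}}}{j^{\gamma+1}}\Bigl(\int_\Omega |\nabla u|^2 u^{\gamma-1}\,dx\Bigr)^{1/2},
\]
which requires $\mathbb{E}_\gamma[u]$ to be \emph{already known finite} in order to vanish; the cutoff rate $\|\nabla\varphi_j\|_{L^2}\le j^{-(\gamma+1)}$ is chosen precisely for this. Since $u_k$ lies only in $W^{1,2}_{loc}(\Omega)$, not in $\dot W^{1,2}_0(\Omega)$, you cannot dispense with the cutoff, so the ``unconditional'' identity is not available for free.

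Two concrete gaps in your limiting argument and in the missing direction. First, your proposed monotonicity $\omega[u_k]\le\omega[u_{k+1}]$ is false: the Riesz measure of the obstacle truncation $u_k=\min(u,k)$ has a singular contribution concentrated near $\{u=k\}$ that is absent from $\omega[u_{k+1}]$, so the measures are not ordered. Your alternative of invoking (a)$\Leftrightarrow$(d) to get $\int u_k^\gamma\,d\omega[u_k]\le\int u^\gamma\,d\omega$ is not a consequence of that equivalence as stated. Second, and more fundamentally, you have no argument for the implication $\E_\gamma[\omega]<\infty\Rightarrow\mathbb{E}_\gamma[u]<\infty$. The paper's Lemma \ref{lemma2} handles $\gamma>1$ by writing $\gamma=\frac{1+q}{1-q}$, producing a finite-energy solution $w\in\dot W^{1,2}_0(\Omega)$ to the auxiliary sublinear equation $\LL w=\omega w^q$ with $\|w\|_{\dot W^{1,2}_0}\lesssim\E_\gamma[\omega]^{1/2}$, dominating $u^{\frac{\gamma+1}{2}}\lesssim w$ via the pointwise lower bound \eqref{lowerbound}, and then applying the Hardy-type inequality \eqref{ineq-a} with $\varphi=w$ to conclude $\int w^2\frac{|\nabla u|^2}{u^2}\,dx\lesssim\|w\|^2_{\dot W^{1,2}_0}$; membership in $\dot W^{1,2}_0(\Omega)$ is then upgraded via Deny--Lions and Kolsrud's boundary result. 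For $0<\gamma<1$ the paper reduces to $\gamma=1$ by observing (Lemma \ref{lemma1}) that $u^{\frac{\gamma+1}{2}}$ is itself a Green potential, a structural fact your proposal does not exploit. These are the ingredients your sketch is missing, and without them neither the backward implication nor the unconditional identity is reachable along the path you describe.
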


\begin{Rem}
By uniform ellipticity condition \eqref{cond:ellipticity}, we see that
\eqref{gamma-energy-green} is equivalent to \eqref{gamma-norm}.
Therefore, by our discussion above, it suffices to show that 
\begin{equation}\label{whattoshow}
\E_{\gamma}[\omega]<+\infty 
\Longleftrightarrow
\mathbb{E}_{\gamma}[u]< +\infty
\Longleftrightarrow
u^{\frac{\gamma +1}{2}} \in \dot{W}^{1,2}_{0}(\Omega),
\end{equation}
and also establish formula \eqref{formula:Energy} whenever $\E_{\gamma}[\omega]<+\infty$.
\end{Rem}

We first prove an auxiliary fact which will be used in the 
proof of \eqref{whattoshow} when $0<\gamma <1$.

\begin{Lem}\label{lemma1} 
Let $0<\gamma<1$ and $\omega \in \M^{+}(\Omega)$, 
and let $G$ be a positive Green function associated with $-\LL$ on $\Omega$. 
Suppose $u:= \g\omega \not\equiv \infty$. 
Then $v:= u^{\gamma}$ is a positive $\A$-superharmonic function on $\Omega$, and 
$v=\g\mu$, where $\mu \in \M^{+}(\Omega)$ is the Riesz measure of $v$. Moreover, 
\begin{equation}\label{eq1:lemma1}
\E_\gamma[\omega]< +\infty
\Longleftrightarrow 
\E[\mu]< +\infty.
\end{equation}
In this case, we have
\begin{equation}\label{eq2:lemma1}
\frac{\gamma+1}{2} \, \E_\gamma[\omega]  
\leq  \E[\mu] \leq 
\frac{\gamma+1}{2 \gamma} \, \E_\gamma[\omega].
\end{equation}
\end{Lem}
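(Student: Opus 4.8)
The strategy is to reduce the statement about $\E_\gamma[\omega]$ to a statement about the ordinary energy $\E[\mu]$ of the Riesz measure of $v := u^\gamma$, and for that the first order of business is to verify that $v$ really is a Green potential. Since $u = \g\omega$ is $\A$-superharmonic and positive (by the discussion of Green potentials in Section~\ref{Sect:Prelim}), and $t \mapsto t^\gamma$ is concave and nondecreasing on $(0,\infty)$ for $0<\gamma<1$, the composition $v = u^\gamma$ is again $\A$-superharmonic; this is the standard fact that a concave increasing function of a superharmonic function is superharmonic, which for the divergence-form operator $\LL$ follows from the comparison/minimum principle defining $\mathcal{H}_\A$. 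It is also positive and not identically $+\infty$ (since $u\not\equiv+\infty$), so it has a well-defined Riesz measure $\mu := \omega[v] \in \M^+(\Omega)$ with $\LL v = \mu$. To conclude $v = \g\mu$ I would invoke the Riesz decomposition for $\A$-superharmonic functions on a domain carrying a Green function: $v = \g\mu + h$ with $h$ the greatest $\A$-harmonic minorant of $v$; then $0 \le h \le v = u^\gamma$ and $h$ is dominated by the $\A$-harmonic minorant of $u^\gamma$, which I would show is $0$ because the greatest harmonic minorant of $u = \g\omega$ is $0$ (as $u$ is a potential) and $h \le u^\gamma$ forces $h^{1/\gamma}$-type comparisons; more directly, $h \le v \le 1 + u$ (say, splitting where $u\le 1$ and $u>1$, using $u^\gamma \le 1$ on the first set and $u^\gamma \le u$ on the second), and a bounded-above nonnegative $\A$-harmonic function that is $\le$ a potential plus a constant must vanish on a domain with Green function. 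So $v = \g\mu$.

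\textbf{The energy comparison.} With $v = u^\gamma = \g\mu$ established, I would compute both energies via the integration-by-parts expression that Theorem~\ref{Thm:Energy} attaches to Green potentials — but note the mild circularity: Lemma~\ref{lemma1} is being proved \emph{en route} to Theorem~\ref{Thm:Energy}, so I should only use the part of the machinery already available, namely the characterization $\E_\gamma[\rho] < \infty \iff \rho^{(\gamma+1)/2}\text{-potential} \in \dot W^{1,2}_0$ for \emph{bounded} or truncated potentials, or work directly with truncations $u_k = \min(u,k)$. The clean route: for the truncated functions $u_k$ (bounded, in $\dot W^{1,2}_{loc}$, with $\omega[u_k]$ in the local dual of $W^{1,2}$, as recalled in Section~\ref{Sect:Prelim}), one has the honest chain-rule computation
\[
\int_\Omega (\A\nabla u_k \cdot \nabla u_k)\, u_k^{\gamma-1}\,dx
= \frac{1}{\gamma}\int_\Omega \A\nabla u_k \cdot \nabla\big(u_k^{\gamma}\big)\,dx,
\]
and similarly $\nabla(u_k^{(\gamma+1)/2}) = \tfrac{\gamma+1}{2} u_k^{(\gamma-1)/2}\nabla u_k$ gives
\[
\int_\Omega \big|\nabla\big(u_k^{(\gamma+1)/2}\big)\big|^2_{\A}\,dx
= \Big(\tfrac{\gamma+1}{2}\Big)^2 \int_\Omega (\A\nabla u_k\cdot\nabla u_k)\,u_k^{\gamma-1}\,dx,
\]
where $|\cdot|^2_\A$ denotes the quadratic form $\A\xi\cdot\xi$. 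Pairing $\nabla u_k$ against $\nabla(u_k^\gamma)$ and recognizing the latter as (close to) the test-function pairing against $\mu_k := \omega[v_k]$ where $v_k$ is the corresponding truncation of $v$, together with $v = u^\gamma$ and $v_k \approx (u_k)^\gamma$ up to the truncation level, yields after passing $k \to \infty$ (using monotone convergence on the left, weak convergence $\omega[u_k]\to\omega$ and $\omega[v_k]\to\mu$ from Theorem~\ref{weak_cont}, and Fatou where needed) the two-sided bound. Quantitatively, the factors $\gamma$ and $(\gamma+1)/2$ assemble exactly into \eqref{eq2:lemma1}: $\E[\mu] = \int v\,d\mu$ relates to $\int |\nabla(v^{1/2})|^2$-type quantities via the $\gamma=1$ case of the same identity, while $\E_\gamma[\omega] = \int u^\gamma\,d\omega = \gamma\int(\A\nabla u\cdot\nabla u)u^{\gamma-1}$; dividing gives the ratio $\E[\mu]/\E_\gamma[\omega] \in [\tfrac{\gamma+1}{2},\tfrac{\gamma+1}{2\gamma}]$ once one controls $\int|\nabla v|^2_\A$ between $\int u^{\gamma-1}|\nabla u|^2_\A \cdot \gamma^2$-type expressions — the gap between the two bounds is precisely the gap between $\gamma$ and $1$ in the exponent manipulations.

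\textbf{Main obstacle.} The delicate point is not the formal chain rule but the \emph{justification of the integration by parts / limit passage} at the level of generality required: $\omega$ is an arbitrary measure in $\M^+(\Omega)$, $u = \g\omega$ need only be $\A$-superharmonic (so $u \in W^{1,p}_{loc}$ only for $p < n/(n-1)$, generally not in $W^{1,2}_{loc}$), and a priori neither energy is finite. The truncation argument sidesteps the regularity deficit — each $u_k \in \dot W^{1,2}_{loc} \cap L^\infty$ — but one must carefully check that $\nabla(u_k^\gamma)$ and $\nabla((u_k)^\gamma)$ versus $\nabla(v_k)$ match up (the truncation of $u^\gamma$ at level $k^\gamma$ is not literally $(u_k)^\gamma$ off a null set only up to reconciling the cutoff thresholds, which is harmless), and that the boundary terms vanish — here the fact that everything is a genuine Green \emph{potential} (greatest harmonic minorant zero), rather than a general superharmonic function, is what kills the boundary contribution and is the reason the identity holds on an \emph{arbitrary} such $\Omega$. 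I would handle the limit by: (i) establishing \eqref{eq2:lemma1} with $u,v$ replaced by $u_k, v_k$ and all integrals finite; (ii) letting $k\to\infty$ on the energy side by monotone convergence (the integrands increase q.e.); (iii) letting $k\to\infty$ on the measure side via the weak continuity of Riesz measures, which gives $\liminf$ control in one direction and, combined with the two-sided truncated inequality, pins down both $\E_\gamma[\omega]$ and $\E[\mu]$ simultaneously, yielding the equivalence \eqref{eq1:lemma1} and the bounds \eqref{eq2:lemma1}.
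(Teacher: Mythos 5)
Your plan diverges from the paper's proof on both halves, and in both places there is a real gap.

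\textbf{The Riesz decomposition.} Your ``more directly'' argument --- a bounded nonnegative $\A$-harmonic function dominated by a potential plus a constant must vanish --- is false: constants are bounded nonnegative $\A$-harmonic on any domain with a Green function and satisfy such a bound trivially. The hint you give about ``$h^{1/\gamma}$-type comparisons'' is the right idea but is not carried out. What the paper actually does is: set $g := h^{1/\gamma}$, observe that $g$ is $\A$-subharmonic (convexity of $t\mapsto t^{1/\gamma}$), write $-g = \g\nu + \tilde h$ by the Riesz decomposition of $-g$, and then use $u = v^{1/\gamma} \ge h^{1/\gamma} = g$ to conclude $\g(\omega+\nu) \ge -\tilde h$, so $-\tilde h$ is an $\A$-harmonic minorant of a potential, hence $\tilde h = 0$, hence $g = -\g\nu \le 0$, hence $g = 0$, hence $h = 0$. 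This is the step you would need to actually spell out.

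\textbf{The energy comparison.} The paper never touches integration by parts, the chain rule, or Sobolev norms here. It introduces $w := u^{(\gamma+1)/2}$ (note: the energy bounds~\eqref{eq2:lemma1} are about the Riesz measure of $w = u^{(\gamma+1)/2}$, not of $v = u^\gamma$; the lemma's displayed definition of $v$ is a typo, as is clear from how it is invoked in the proof of Lemma~\ref{lemma2}), truncates to $u_k = \min(u,k)$ and $w_k = \min(w, k^{(\gamma+1)/2})$, and then computes $\int \g\mu_k\,d\mu$ entirely with Fubini and the iterated pointwise inequality~\eqref{iterated2} (Theorem~\ref{Thm:iterated}\,(ii), with $h=1$), passing to the limit by monotone convergence. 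The constants $\tfrac{\gamma+1}{2}$ and $\tfrac{\gamma+1}{2\gamma}$ come out of the two uses of~\eqref{iterated2} with exponents $\tfrac{2\gamma}{\gamma+1}$ and $\tfrac{\gamma+1}{2}$, not from any $\nabla$-calculus. Your integration-by-parts route is circular in a way you flag but do not resolve: the identity $\int u^\gamma\,d\omega = \gamma\int(\A\nabla u\cdot\nabla u)\,u^{\gamma-1}\,dx$ for $\gamma\neq 1$ (needed to convert $\E_\gamma[\omega_k]$ into a Dirichlet-type integral) is precisely the formula~\eqref{formula:Energy}, which in the paper is the \emph{conclusion} of Theorem~\ref{Thm:Energy} and is established in Lemma~\ref{Lem_identity} \emph{after} this lemma, using an honest mollification-and-cutoff argument. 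You say ``pairing $\nabla u_k$ against $\nabla(u_k^\gamma)$ and recognizing the latter as (close to) the test-function pairing against $\mu_k$'' --- but that recognition \emph{is} the formula you cannot yet use, and establishing it for truncated potentials on an arbitrary Green domain is the content of Lemma~\ref{Lem_identity}, which is nontrivial. In short: your route could in principle be made rigorous (and would even yield the sharper constant $\tfrac{(\gamma+1)^2}{4\gamma}$), but it requires re-proving the integration-by-parts formula for bounded truncated potentials first, which you do not do, whereas the paper's Fubini-plus-iterated-estimate argument avoids this entirely.
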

\begin{proof} 
Notice that $u:=\g \omega$ is a positive $\A$-superharmonic function on $\Omega$ 
since $\g\omega \not\equiv +\infty$, see \cite{GH}.
Since $0<\gamma< 1$, the map $x \longmapsto x^{\gamma}$, for $x\geq 0$, is concave and 
increasing; it follows that $v:=u^{\gamma}$ is a positive $\A$-superharmonic function on $\Omega$ \cite[Theorem 7.5]{HKM}. In light of the Riesz decomposition theorem, 
$v=\g \mu+h$ where $\mu  \in \mathcal{M}^{+}(\Omega)$ is the Riesz measure of $v$,
and $h$ is the unique positive $\A$-harmonic function on $\Omega$.

Observe that $g:=h^{\frac{1}{\gamma}}$ is a positive $\A$-subharmonic function on $\Omega$
since $h$ is positive $\A$-harmonic and the map $x \longmapsto x^{\frac{1}{\gamma}}$ for $x \geq 0$,  is convex \cite[Theorem 7.5]{HKM}. Therefore $-g$ is an $\A$-superharmonic function on $\Omega$,
and thus  $-g= \g \nu + \tilde h$, where $\nu \in \mathcal{M}^{+}(\Omega)$ is the 
Riesz measure of $-g$, and $\tilde h$ is the unique $\A$-harmonic function on $\Omega$.
Since 
\[
\g\omega = u = v^{\frac{1}{\gamma}}=(\g \mu+h)^{\frac{1}{\gamma}} \geq h^{\frac{1}{\gamma}} = g = -\g \nu -\tilde h,
\] 
we deduce $\g (\omega + \nu) = u + \g \nu  \geq -\tilde h$. In other words,
$-\tilde h$ is a positive $\A$-harmonic minorant of the potential $\g (\omega + \nu)$. 
Consequently, $-\tilde h= 0$ and thus $g = -\g \nu \le 0$. 
This yields  $h=g^{\gamma}=0$. We have shown that  $v=\g \mu$ is a potential.

Suppose that $\mathcal{E}_\gamma[\omega]< +\infty$, and let $w:=u^{\frac{\gamma+1}{2}}$. 
Since $\frac{\gamma+1}{2}\in (0,1)$,
by a similar argument as above, $w$ is a positive $\A$-superharmonic function on $\Omega$, and 
$w = \g\mu$, where $\mu \in \M^{+}(\Omega)$ is the Riesz measure of $w$.
For each $k \in \N$, let 
$
u_k :=\min(u, k) \quad \text{and} \quad w_k :=\min(w, k^{\frac{\gamma+1}{2}}).  
$
Using the same argument as above, we see that both $u_k$ and $w_k$ are potentials with 
the corresponding Riesz measures  $\omega_k =\omega [u_k]$ and $\mu_k = \mu[w_k]$,
respectively. Clearly, $\text{supp}(\mu_k) \subset \{ u \le k\}$, thus both $u_k$ and 
$w_k$ are  uniformly bounded (by $k$)  $d \mu_k$-a.e.  Using Fubini's theorem 
 and the iterated inequality \eqref{iterated2} with 
 $\omega:=\mu_k$, $s:=\frac{2\gamma}{\gamma+1}$ and $h:=1$, we estimate 
\begin{align*}
\int_\Omega \g \mu_k \, d \mu  & =  \int_\Omega \g \mu \, d \mu_k = \int_\Omega \g \omega \,  (\g \mu)^ \frac{\gamma-1}{\gamma +1}  d \mu_k \leq \int_\Omega \g \omega \,  (\g \mu_k)^ \frac{\gamma-1}{\gamma +1}  d \mu_k \\
&= \int_\Omega \g \Big( (\g \mu_k)^{\frac{\gamma-1}{\gamma +1}} d \mu_k\Big) \,  d \omega \leq \frac{\gamma+1}{2\gamma}\int_\Omega (\g \mu_k)^{\frac{2 \gamma}{\gamma +1}} d \omega\\
&\leq \frac{\gamma+1}{2\gamma} \int_\Omega (\g \omega)^{\gamma} d \omega =  \frac{\gamma+1}{2\gamma} \mathcal{E}_\gamma[\omega].  
\end{align*}
Passing to the limit $k \rightarrow \infty$ and using the monotone convergence theorem, we deduce
\[
\mathcal{E}[\mu]=\int_\Omega \g \mu \, d \mu\le  \frac{\gamma+1}{2\gamma} 
 \mathcal{E}_\gamma[\omega] < +\infty
\]
since $w_{k}=\g \mu_k\uparrow w = \g \mu$ in $\Omega$.

Conversely, suppose  $\mathcal{E}[\mu]< +\infty$. By using the same notation  
and argument as above, we estimate 
\begin{align*}
\mathcal{E}[\mu]  &=\int_\Omega \g\mu \, d \mu 
=\int_\Omega (\g\omega)^{\frac{\gamma+1}{2}} \,d\mu\ge \int_\Omega (\g\omega_k)^{\frac{\gamma+1}{2}} \, d \mu \\ & 
 \ge \frac{\gamma+1}{2} \int_\Omega \g \Big ( (\g \omega_k)^{\frac{\gamma-1}{2}} d \omega_k \Big)  d\mu \ge \frac{\gamma+1}{2} \int_\Omega \g \Big ( (\g \omega)^{\frac{\gamma-1}{2}} d \omega_k \Big)  d \mu.
\end{align*}
In the above estimate, we have $u:=\g \omega \le k$ and $w=\g\mu \leq k$ $d \omega_k$-a.e.  
Applying  Fubini's theorem yields
\begin{align*}
\int_\Omega \g \Big ( (\g \omega)^{\frac{\gamma-1}{2}} d \omega_k \Big) \; d\mu
&= \int_\Omega (\g \omega)^{\frac{\gamma-1}{2}} \g \mu \, d \omega_k \\
&= \int_\Omega (\g \omega)^{\gamma} \, d \omega_k.
\end{align*}
Since $\gamma\in (0, 1)$, we have $ u^{\gamma}:=(\g \omega)^{\gamma}$ is a potential, that is,
$u^{\gamma}= \g \nu$, where $\nu \in \mathcal{M}^{+}(\Omega)$
is the Riesz measure of $u^{\gamma}$. Applying  Fubini's theorem and the monotone convergence theorem, we have
\[
\int_\Omega (\g \omega)^{\gamma} \, d \omega_k
=\int_\Omega \g \nu \, d \omega_k 
= \int_\Omega \g  \omega_k \, d \nu 
\uparrow \int_\Omega \g\omega \,d\nu
\]
since $\g  \omega_k=u_k\uparrow u=\g \omega$ in $\Omega$ as $k \rightarrow \infty$. 
Hence, 
\[
 \frac{\gamma+1}{2} \mathcal{E}_\gamma[\omega] 
 = \frac{\gamma+1}{2}  \int_\Omega \g  \nu \;d \omega
 = \frac{\gamma+1}{2}  \int_\Omega \g  \omega \;d \nu
\leq \mathcal{E}[\mu]
<+\infty.
\]
This completes the proof of the lemma.
\end{proof}

We now establish \eqref{whattoshow}, which yields the first part of Theorem \ref{Thm:Energy}.

\begin{Lem}\label{lemma2} 
Let $\gamma >0$ and  $\omega \in \M^{+}(\Omega)$ with $\omega \not\equiv 0$, 
and let $G$ be a positive Green function associated with $\LL$ on $\Omega$. 
If $u:=\g\omega$ then \eqref{whattoshow} holds.
In this case, there exists a positive constant $C$ which depends only on $m$, $M$ and $\gamma$ such that 
\begin{equation}\label{eq-equiv}
C^{-1} \mathcal{E}_\gamma[\omega] 
\leq \,
\mathbb{E}_{\gamma}[u]
\leq C \, 
\mathcal{E}_\gamma[\omega].
\end{equation}
\end{Lem}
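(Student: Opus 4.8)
The plan is to prove the chain of equivalences in \eqref{whattoshow} together with the two-sided estimate \eqref{eq-equiv} by treating the cases $\gamma \ge 1$ and $0<\gamma<1$ separately, reducing everything to the already-understood energy $\E_\gamma[\omega]$ and to Theorem \ref{ineq-a-b}. The unifying observation is that, by the integration-by-parts philosophy behind \eqref{IBP1}, for a truncation $u_k=\min(u,k)$ (which lies in $L^\infty\cap\dot W^{1,2}_{loc}$ and has Riesz measure $\omega[u_k]$ compactly controlled) one has the exact identity $\int_\Omega(\A\nabla u_k\cdot\nabla u_k)\,u_k^{\gamma-1}dx=\tfrac1\gamma\int_\Omega u_k^\gamma\,d\omega[u_k]$; the content is then to pass to the limit $k\to\infty$ in both sides and identify the limits.

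\textbf{Step 1 (test-function identity for truncations).} First I would fix $k$ and justify, using $u_k\in L^\infty(\Omega)\cap \dot W^{1,2}_{loc}(\Omega)$ and $\omega[u_k]$ locally in the dual of $W^{1,2}$, that $u_k^{(\gamma+1)/2}\in \dot W^{1,2}_{0}(\Omega)$ with $\nabla u_k^{(\gamma+1)/2}=\tfrac{\gamma+1}{2}u_k^{(\gamma-1)/2}\nabla u_k$, and that one may use powers of $u_k$ as test functions in the weak formulation $\int \A\nabla u_k\cdot\nabla\varphi\,dx=\int\varphi\,d\omega[u_k]$. This gives $\int_\Omega \A\nabla u_k\cdot\nabla u_k\, u_k^{\gamma-1}dx = \tfrac1\gamma\int_\Omega u_k^\gamma\,d\omega[u_k]$, hence by uniform ellipticity \eqref{cond:ellipticity}, $\tfrac{m}{\gamma}\int u_k^\gamma d\omega[u_k]\le \mathbb E_\gamma[u_k]\le \tfrac{M}{\gamma}\int u_k^\gamma d\omega[u_k]$, where $\mathbb E_\gamma[u_k]=\int|\nabla u_k|^2 u_k^{\gamma-1}dx$. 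A minor wrinkle for $0<\gamma<1$ is the singularity of $u^{\gamma-1}$ where $u$ is small; this is handled by first working with $\min(u,k)+\varepsilon$ and letting $\varepsilon\downarrow0$ via monotone convergence, or by citing \cite{MS}.

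\textbf{Step 2 (passage to the limit).} The left sides converge: $\mathbb E_\gamma[u_k]=\int|\nabla u_k|^2u_k^{\gamma-1}dx\uparrow \mathbb E_\gamma[u]$ by monotone convergence, since $\nabla u_k=\nabla u\cdot\mathbf 1_{\{u<k\}}$ and $u_k^{\gamma-1}$ is monotone in $k$ on $\{u<k\}$ (for $\gamma\le1$ it increases, for $\gamma\ge1$ one checks $|\nabla u_k|^2u_k^{\gamma-1}=|\nabla u|^2u^{\gamma-1}\mathbf 1_{\{u<k\}}$ which increases). For the right sides I would show $\int_\Omega u_k^\gamma\,d\omega[u_k]\to \int_\Omega u^\gamma\,d\omega=\E_\gamma[\omega]$. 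Lower semicontinuity (Theorem \ref{weak_cont}, or the weak convergence $\omega[u_k]\rightharpoonup\omega$ recorded in the preliminaries, together with $u_k\uparrow u$ lower semicontinuous) gives $\liminf_k\int u_k^\gamma d\omega[u_k]\ge \E_\gamma[\omega]$; for the reverse inequality I would use, as in Lemma \ref{lemma1}, that $\g\omega_k=u_k\le k$ and the comparison $\int u_k^\gamma d\omega_k=\int \g((\g\omega_k)^{\gamma-1}d\omega_k)\,d\omega_k$ together with the iterated estimates \eqref{iterated1}/\eqref{iterated2} (with $h=1$) to bound $\int u_k^\gamma d\omega_k$ by a constant multiple of $\int(\g\omega_k)^\gamma d\omega_k\le\int(\g\omega)^\gamma d\omega=\E_\gamma[\omega]$. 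Combining, $\mathbb E_\gamma[u]<\infty\iff\E_\gamma[\omega]<\infty$ with the constants $C=C(m,M,\gamma)$ of \eqref{eq-equiv}, and when finite $\E_\gamma[\omega]=\gamma\int(\A\nabla u\cdot\nabla u)u^{\gamma-1}dx$, i.e.\ \eqref{formula:Energy}.

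\textbf{Step 3 ($u^{(\gamma+1)/2}\in\dot W^{1,2}_0$).} Set $w_k=u_k^{(\gamma+1)/2}$; Step 1 gives $\|\nabla w_k\|_{L^2}^2=(\tfrac{\gamma+1}{2})^2\mathbb E_\gamma[u_k]$, uniformly bounded under the assumption $\mathbb E_\gamma[u]<\infty$, and $w_k\in\dot W^{1,2}_0(\Omega)$ since each $w_k$ is a bounded potential. As $w_k\uparrow u^{(\gamma+1)/2}$ in $L^1_{loc}$ with bounded Dirichlet energy, $u^{(\gamma+1)/2}\in\dot W^{1,2}_0(\Omega)$ and $\nabla w_k\to\nabla u^{(\gamma+1)/2}$ weakly in $L^2$, whence $\mathbb E_\gamma[u]=(\tfrac{2}{\gamma+1})^2\|\nabla u^{(\gamma+1)/2}\|_{L^2}^2$; conversely, if $u^{(\gamma+1)/2}\in\dot W^{1,2}_0$ then $\mathbb E_\gamma[u]<\infty$ directly. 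For $0<\gamma<1$ this last equivalence is also immediate from Lemma \ref{lemma1}: there $w=u^{(\gamma+1)/2}=\g\mu$ and $\mathbb E[\mu]=\|\nabla w\|_{L^2}^2<\infty$ is, by the $\gamma=1$ case applied to $\mu$, equivalent to $w=\g\mu\in\dot W^{1,2}_0$, while \eqref{eq1:lemma1}--\eqref{eq2:lemma1} relate $\E[\mu]$ and $\E_\gamma[\omega]$. This is in fact how I would organize the write-up: prove the $\gamma=1$ statement first (Steps 1--3 with $\gamma=1$, where no power singularity occurs and Theorem \ref{ineq-a-b} applies cleanly), then deduce $0<\gamma<1$ from $\gamma=1$ via Lemma \ref{lemma1}, and finally handle $\gamma>1$ by the truncation argument of Steps 1--2.

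\textbf{Main obstacle.} The delicate point is the upper bound $\limsup_k\int u_k^\gamma\,d\omega[u_k]\le\E_\gamma[\omega]$ when $\E_\gamma[\omega]$ is a priori only known finite: weak convergence of $\omega[u_k]$ alone does not control $\int u_k^\gamma d\omega[u_k]$ from above because $u_k^\gamma$ is unbounded and the measures are varying. This is exactly where the structural input of Lemma \ref{lemma1}'s computation — rewriting $\int u_k^\gamma d\omega_k$ via Fubini as a Green-potential pairing and applying the iterated pointwise inequalities \eqref{iterated1}--\eqref{iterated2} to replace $\g\omega_k$ by $\g\omega$ under the integral — is essential; getting the constants in \eqref{eq-equiv} to depend only on $m,M,\gamma$ (and not on $k$ or $\omega$) is the crux of the estimate.
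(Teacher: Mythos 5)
Your Steps 1--3 for $\gamma = 1$ (truncation plus integration by parts, with mollification as in the paper's Lemma \ref{Lem_identity}) and the reduction of $0<\gamma<1$ to $\gamma=1$ via Lemma \ref{lemma1} coincide with the paper's argument. The problem is the case $\gamma>1$, and in particular the ``main obstacle'' you correctly single out but do not actually overcome.

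You claim to prove $\limsup_k \int_\Omega u_k^\gamma\,d\omega_k \le C\,\E_\gamma[\omega]$ by asserting that $\int_\Omega (\g\omega_k)^\gamma\,d\omega_k \le \int_\Omega (\g\omega)^\gamma\,d\omega$. This does not follow from $\g\omega_k\le\g\omega$, because the outer measure also changes: $\omega_k=\omega[u_k]$ is not dominated by $\omega$ (it carries extra mass on $\{u=k\}$). For $\gamma=1$ Fubini rescues the estimate,
\[
\int \g\omega_k\,d\omega_k \le \int \g\omega\,d\omega_k = \int \g\omega_k\,d\omega \le \int \g\omega\,d\omega,
\]
but for $\gamma\neq 1$ the analogous manipulations via \eqref{iterated1}--\eqref{iterated2} are circular: every rearrangement returns $\int (\g\omega_k)^\gamma\,d\omega_k$ on both sides. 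Indeed, the inequality $\int (\g\omega_k)^\gamma\,d\omega_k \lesssim \E_\gamma[\omega]$ is, via the integration-by-parts identity you are trying to establish, \emph{equivalent} to the bound $\mathbb E_\gamma[u_k]\lesssim\E_\gamma[\omega]$ you are aiming for, so assuming it begs the question.

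The paper's proof of the direction $\E_\gamma[\omega]<\infty \Rightarrow \mathbb E_\gamma[u]<\infty$ for $\gamma>1$ is essentially different and is the real content of the lemma in that range. Writing $\gamma=\frac{1+q}{1-q}$ with $0<q<1$, one constructs a positive finite-energy solution $w\in\dot W^{1,2}_0(\Omega)$ to the sublinear equation $\LL w = \omega\,w^q$ with $\|w\|_{\dot W^{1,2}_0(\Omega)}^2 \lesssim \E_\gamma[\omega]$ (via \cite[Lemma 5.5]{SV}), invokes the pointwise lower bound of Theorem \ref{Thm:lowerbound} to get $v:=u^{(\gamma+1)/2}=(\g\omega)^{\frac{1}{1-q}\cdot\frac{1+q}{2}\cdot 2/(1+q)}\lesssim w$, and then estimates
\[
\mathbb E_\gamma[u]=\int_\Omega v^2\,\frac{|\nabla u|^2}{u^2}\,dx \lesssim \int_\Omega w^2\,\frac{|\nabla u|^2}{u^2}\,dx \le C\,\|w\|_{\dot W^{1,2}_0(\Omega)}^2
\]
using the Hardy-type inequality \eqref{ineq-a} with $\varphi=w$. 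It is precisely this PDE input, together with the Deny--Lions theorem to conclude $v\in\dot W^{1,2}_0(\Omega)$, that replaces the upper limsup bound your scheme requires. Without it, your argument for $\gamma>1$ has a genuine gap.

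A minor additional point: in Step 3, for $\gamma>1$ the function $u_k^{(\gamma+1)/2}$ is \emph{not} a bounded potential, since $t\mapsto t^{(\gamma+1)/2}$ is convex and does not preserve $\A$-superharmonicity. Its membership in $\dot W^{1,2}_0(\Omega)$ must instead be justified by noting that the map is Lipschitz on $[0,k]$ and vanishes at $0$, so that $u_k\in\dot W^{1,2}_0(\Omega)\cap L^\infty$ implies $u_k^{(\gamma+1)/2}\in\dot W^{1,2}_0(\Omega)$.
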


\begin{proof} 
Without loss of generality, we may assume that $u \not\equiv+\infty$. 
It follows that $u$ is a positive $\A$-superharmonic 
function in $\Omega$. Moreover, $u\in W^{1,p}_{{\rm loc}} (\Omega)$
whenever $1\leq p <\frac{n}{n-1}$, see \cite{HKM}. 
Consider three cases as follows:

$\bullet$ Case $\gamma =1$. This is completely analogous to the classical result shown in,
for example, \cite[Theorem 1.20]{L}, due to uniform ellipticity assumption \eqref{cond:ellipticity}. 
In this case we further have formula \eqref{formula:Energy} using 
an approximation argument demonstrated below in Lemma \ref{Lem_identity}.

$\bullet$ Case $0< \gamma < 1$. In light of Lemma \ref{lemma1},  
we have $v:=u^{\frac{\gamma+1}{2}}$ is a positive $\A$-superharmonic function in $\Omega$, and 
$v= \g \mu$ where $\mu \in \M^{+}(\Omega)$ is the Riesz measure of $v$. 
Moreover, 
\[
\mathcal{E}_\gamma[\omega]< +\infty 
\Longleftrightarrow
\mathcal{E}[\mu]< +\infty
\Longleftrightarrow 
v \in \dot{W}^{1,2}_{0}(\Omega).
\]
In this case, we have
\[
\frac{\gamma+1}{2} \, \E_\gamma[\omega]  
\leq  \E[\mu] \leq 
\frac{\gamma+1}{2 \gamma} \, \E_\gamma[\omega].
\]
On the other hand, notice that $\nabla v=\frac{\gamma+1}{2}u^{\frac{\gamma-1}{2}} \nabla u$ a.e. in $\Omega$. Appealing to the previous case, we deduce 
\[
\mathcal{E}[\mu]= 
\int_\Omega \A \nabla v \cdot \nabla v\;dx 
\leq M \Big(\frac{\gamma+1}{2} \Big)^2
 \int_\Omega |\nabla u|^2 u^{\gamma-1}\;dx
\]
and similarly
\[
\mathcal{E}[\mu]= 
\int_\Omega \A \nabla v \cdot \nabla v\;dx 
\geq m \Big(\frac{\gamma+1}{2} \Big)^2
 \int_\Omega |\nabla u|^2 u^{\gamma-1}\;dx.
\]
This proves both assertions \eqref{whattoshow} and \eqref{eq-equiv}, respectively.

$\bullet$ Case $\gamma >1$. Suppose that  $v := u^{\frac{\gamma+1}{2}} \in \dot{W}^{1,2}_0(\Omega)$. Therefore,  $\mathbb{E}_{\gamma}[u] < + \infty$.
For each $k \in \N$, let $u_k=\min(u, k)$, which is a positive $\A$-superharmonic function of the class $L^{\infty}(\Omega) \cap W_{loc}^{1,2}(\Omega)$. 
Denote the corresponding Riesz measure of each $u_{k}$ 
by $\omega_k:= \omega[u_{k}]$. 
Without loss of generality, we may suppose $v$ is quasicontinuous.
Applying inequality \eqref{ineq-b} with $\phi:=v$ and $\omega:=\omega_k$, we obtain 
\[
\begin{split}
 \int_\Omega u^\gamma d \omega_k  
 &\leq \int_\Omega v^2 \frac{d \omega_k}{\g \omega_k}  \leq C \int_{\Omega} |\nabla v|^{2}\;dx \\ 
 & = C \Big(\frac{\gamma+1}{2}\Big)^2 \int_\Omega |\nabla u |^2 u^{\gamma-1} dx.
\end{split}
\]
 On the other hand, by Fubini's theorem and iterated estimate \eqref{iterated1} 
 with $\omega:=\omega_k$ and $s:=\gamma$, we have 
\[
\begin{split}
 \int_\Omega u^\gamma \;d\omega_k  
 &= \int_\Omega (\g \omega)^{\gamma-1} \g\omega \; d \omega_k  = \int_\Omega \g \Big( (\g \omega)^{\gamma-1} d \omega_k  \Big)\;d\omega \\  
 &\geq  \int_\Omega \g \Big( (\g \omega_k)^{\gamma-1} d \omega_k \Big) \;d\omega \geq  \frac{1}{\gamma} \int_\Omega (\g \omega_k)^{\gamma} d \omega. 
\end{split}
\]
Therefore,
\[
\frac{1}{\gamma} \int_\Omega (\g \omega_k)^{\gamma}\;d\omega
\leq 
C \Big(\frac{\gamma+1}{2}\Big)^2 \int_\Omega |\nabla u|^2 u^{\gamma-1}\;dx.
\]
Passing to the limit $k\to \infty$ and using the monotone convergence theorem, 
we obtain
 \[
\E_{\gamma}[\omega] = \int_\Omega u^{\gamma}\;d\omega 
 \leq C \gamma \Big(\frac{\gamma+1}{2}\Big)^2 \int_\Omega |\nabla u|^2 u^{\gamma-1} dx < +\infty
\]
since $u_k=\g \omega_k\uparrow u=\g \omega$ in $\Omega$.

Conversely, suppose $\E_\gamma[\omega]< +\infty$.
Write $\gamma=\frac{1+q}{1-q}$ where $0<q<1$,
and consider the corresponding sublinear elliptic equation  
 \begin{equation}\label{sublin}
\LL w = \omega \, w^q \quad \text{in} \; \; \Omega.
 \end{equation}
Using a similar argument as in the proof of \cite[Lemma 5.5]{SV}, there exists a positive 
finite energy solution $w \in \dot{W}^{1,2}_0(\Omega)$ to \eqref{sublin}, satisfying
\begin{equation}\label{est-w}
 ||w||_{\dot{W}^{1,2}_0(\Omega)} \le c \,  \Big(\int_\Omega (\g \omega)^{\frac{1+q}{1-q}}
 \;d\omega \Big)^{\frac{1}{2}},
 \end{equation}
where $c=c(m,M,q)>0$. As usual, we may assume $w$ is quasicontinuous.
By Theorem \ref{Thm:lowerbound}, $w$ obeys the lower bound
\[
 w \ge (1-q)^{\frac{1}{1-q}} (\g \omega)^{\frac{1}{1-q}}  \quad \text{in} \, \, \Omega.
\]
Therefore,
\begin{equation}\label{lower}
 v := u^{\frac{\gamma+1}{2}} = (\g \omega)^{\frac{\gamma+1}{2}}\le (1-q)^{-\frac{1}{1-q}} w  \quad \text{in} \, \, \Omega.
 \end{equation}
From this, we deduce
\begin{equation}\label{est:gamma-norm}
\begin{split}
\int_\Omega |\nabla u|^2 u^{\gamma-1}\;dx 
&= \int_\Omega v^2 \frac{|\nabla u|^2}{u^{2}}\;dx \\
&\leq (1-q)^{-\frac{2}{1-q}}  \int_\Omega w^2 \frac{|\nabla u|^2}{u^{2}}\;dx.
\end{split}
 \end{equation}
Using inequality \eqref{ineq-a} with $\phi:=w$, along with 
 \eqref{est-w}, we estimate 
\begin{equation}\label{est:w2}
\begin{split}
  \int_\Omega w^2 \frac{|\nabla u|^2}{u^{2}}\;dx 
  \leq C \, ||w||^2_{\dot{W}^{1,2}(\Omega)}
  \leq C c^2 \, \mathcal{E}_\gamma[\omega]. 
 \end{split}
 \end{equation}
Hence, by \eqref{est:gamma-norm} and \eqref{est:w2}, we arrive at
\[
\int_\Omega |\nabla u|^2 u^{\gamma-1}\;dx 
\leq Cc^{2} (1-q)^{-\frac{2}{1-q}} \, \E_\gamma[\omega] 
< +\infty.
\]
Moreover, for $r=\frac{2n}{n-2}$ if $n \ge 3$, and any $r<\infty$ if $n=2$, 
we have that $v \in L^r(\Omega)$, since the same is true for $w\in \dot{W}^{1,2}_0(\Omega)$. 
Recall that $\Omega$ is assumed to be a Green domain in the case $n=2$. In other words, 
$v \in \dot{W}^{1, 2}(\Omega)$, the corresponding Sobolev space equipped with the norm 
$ ||v||_{\dot{W}^{1, 2}(\Omega)} = ||\nabla v||_{L^2(\Omega)} + ||v||_{L^r(\Omega)}$.  
The fact that $v  \in \dot{W}^{1, 2}_0(\Omega)$ follows from the Deny-Lions theorem 
(see \cite[Sec. 9.12]{AH} and the references cited there). Notice that, for $z\in \partial \Omega$,  the quasi-limit  $\lim_{x \to z} v(x)=0$ q.e. by \eqref{lower}, since the same is true for   $w \in \dot{W}^{1,2}_{0}(\Omega)$ by \cite{Kol}, Corollary to Theorem 1. This finishes the proof of  lemma.
\end{proof}

We now complete the proof of Theorem \ref{Thm:Energy} by establishing formula \eqref{formula:Energy}
whenever $\E_{\gamma}[\omega]<+\infty$, using an approximation procedure.
  
\begin{Lem}\label{Lem_identity}
Let $\gamma>0$ and $\omega \in \mathcal{M}^{+}(\Omega)$ 
with $\omega \not\equiv 0$, and let $G$ be a positive Green function associated with 
$\LL$ on $\Omega$. If $u:=\g\omega$ then formula \eqref{formula:Energy} 
is valid whenever $\E_{\gamma}[\omega]<+\infty$.
\end{Lem}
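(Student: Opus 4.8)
The plan is to read \eqref{formula:Energy} as the integration-by-parts identity $\int_\Omega u^\gamma\,d\omega=\gamma\int_\Omega(\A\nabla u\cdot\nabla u)\,u^{\gamma-1}\,dx$ (by \eqref{cond:ellipticity} this is equivalent to the stated formula), and to prove it through the substitution $v:=u^{p}$ with $p:=\tfrac{\gamma+1}{2}$. We may assume $u=\g\omega\not\equiv+\infty$, so that, under the standing hypothesis $\E_\gamma[\omega]<\infty$, Lemma~\ref{lemma2} gives $\mathbb{E}_\gamma[u]<\infty$ and $v\in\dot W^{1,2}_0(\Omega)$; note $2p-1=\gamma$ and $\nabla v=p\,u^{p-1}\nabla u$ a.e. I would derive the identity from two facts: \emph{(A)} the distributional chain rule $\LL v=\mu$, where $\mu:=p\,u^{p-1}\omega+p(1-p)\,u^{p-2}(\A\nabla u\cdot\nabla u)\,dx$; and \emph{(B)} the energy identity $\int_\Omega v\,d\mu=\int_\Omega\A\nabla v\cdot\nabla v\,dx$. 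Granting these, since $v\,u^{p-1}=u^{\gamma}$ and $v\,u^{p-2}(\A\nabla u\cdot\nabla u)=u^{\gamma-1}(\A\nabla u\cdot\nabla u)=p^{-2}\,\A\nabla v\cdot\nabla v$, one gets $v\in L^1(|\mu|)$ and $\int_\Omega v\,d\mu=p\,\E_\gamma[\omega]+\tfrac{1-p}{p}\int_\Omega\A\nabla v\cdot\nabla v\,dx$; equating with (B) and solving for $\E_\gamma[\omega]$ yields $\E_\gamma[\omega]=\tfrac{2p-1}{p^{2}}\int_\Omega\A\nabla v\cdot\nabla v\,dx=\gamma\int_\Omega(\A\nabla u\cdot\nabla u)\,u^{\gamma-1}\,dx$, which is \eqref{formula:Energy}.

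To prove (A) I would approximate by the truncations $u_k:=\min(u,k)$ with Riesz measures $\omega_k:=\omega[u_k]$; these are positive $\A$-superharmonic functions in $L^\infty(\Omega)\cap\dot W^{1,2}_{loc}(\Omega)$, with $\omega_k$ locally in the dual of $W^{1,2}(\Omega)$ and $\omega_k\to\omega$ weakly (Theorem~\ref{weak_cont}). Since $u$ is bounded below by a positive constant on every compact subset of $\Omega$, the chain rule for bounded $W^{1,2}_{loc}$ functions composed with (locally) $C^{2}$ nonlinearities applies and gives $\LL(u_k^{p})=p\,u_k^{p-1}\omega_k+p(1-p)\,u_k^{p-2}(\A\nabla u_k\cdot\nabla u_k)\,dx$ for each $k$. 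Testing this against $\varphi\in C_0^\infty(\Omega)$ and letting $k\to\infty$: the left side converges to $\langle\LL v,\varphi\rangle$ by dominated convergence (using $u^{p-1}|\nabla u|\in L^1_{loc}(\Omega)$, immediate from $\mathbb{E}_\gamma[u]<\infty$), the absolutely continuous term on the right converges by dominated convergence, and in $p\int_\Omega u_k^{p-1}\varphi\,d\omega_k=p\int_{\{u<k\}}u^{p-1}\varphi\,d\omega+p\,k^{p-1}\int_{\{u=k\}}\varphi\,d\omega_k$ the first summand converges to $p\int_\Omega u^{p-1}\varphi\,d\omega$, while the second is bounded by $p\|\varphi\|_{L^\infty}\,k^{p-1}\omega_k(\{u=k\})\le p\|\varphi\|_{L^\infty}\,k^{p-1-\gamma}\E_\gamma[\omega]\to0$, since $p-1-\gamma=-\tfrac{\gamma+1}{2}<0$; here I use $\omega_k(\{u=k\})\le\omega(\{u\ge k\})$ (truncation does not increase the mass carried on $\{u\ge k\}$) and the Chebyshev bound $\omega(\{u\ge k\})\le k^{-\gamma}\E_\gamma[\omega]$. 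This yields (A).

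For (B), recall $v\in\dot W^{1,2}_0(\Omega)$ and $v\in L^1(|\mu|)$. Choosing $\varphi_j\in C_0^\infty(\Omega)$ with $\|\nabla(v-\varphi_j)\|_{L^2(\Omega)}\to0$, on one hand $\int_\Omega\varphi_j\,d\mu=\langle\LL v,\varphi_j\rangle\to\int_\Omega\A\nabla v\cdot\nabla v\,dx$. On the other hand $\int_\Omega\varphi_j\,d\mu\to\int_\Omega v\,d\mu$: writing $\mu=\mu^{+}-\mu^{-}$ and using $u^{p-1}=v/u$ and $u^{p-2}(\A\nabla u\cdot\nabla u)=v\,(\A\nabla u\cdot\nabla u)/u^{2}$, the Cauchy--Schwarz inequality gives $\int_\Omega|\varphi_j-v|\,d\mu^{+}\le p\big(\int_\Omega|\varphi_j-v|^{2}\,\tfrac{d\omega}{u}\big)^{1/2}\big(\int_\Omega v^{2}\,\tfrac{d\omega}{u}\big)^{1/2}$ and the analogous bound for $\mu^{-}$ with $\tfrac{d\omega}{u}$ replaced by $\tfrac{\A\nabla u\cdot\nabla u}{u^{2}}\,dx$; the first factors tend to $0$ by \eqref{ineq-b} and \eqref{ineq-a} applied to $\varphi_j-v\in\dot W^{1,2}_0(\Omega)$, and the second factors equal $\E_\gamma[\omega]^{1/2}$ and are at most $M^{1/2}\,\mathbb{E}_\gamma[u]^{1/2}$, both finite. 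Hence $\int_\Omega v\,d\mu=\int_\Omega\A\nabla v\cdot\nabla v\,dx$, and together with the first paragraph this proves \eqref{formula:Energy}.

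The step I expect to be the crux is the passage $k\to\infty$ in (A): one must rule out asymptotic concentration of $\omega_k$ on the level sets $\{u=k\}$ and keep all quantities finite even though $u$, $\omega$, $\omega_k$ may be unbounded with infinite total mass. Both are handled by the Chebyshev estimate $\omega(\{u\ge k\})\le k^{-\gamma}\E_\gamma[\omega]$ (valid since $\E_\gamma[\omega]<\infty$) together with the mass-monotonicity $\omega_k(\{u\ge k\})\le\omega(\{u\ge k\})$ for truncations of Green potentials. Equivalently, one can first establish the truncated identity $\int_\Omega u_k^{\gamma}\,d\omega_k=\gamma\int_\Omega u_k^{\gamma-1}(\A\nabla u_k\cdot\nabla u_k)\,dx$ from (A) and (B) applied at level $k$, and then let $k\to\infty$; the sole remaining point is then $k^{\gamma}\omega_k(\{u=k\})\to0$, which for $\gamma=1$ follows directly from $\E[\omega]<\infty$, and for $0<\gamma<1$ bootstraps from that case by applying it to $v=\g\mu$ as provided by Lemma~\ref{lemma1}.
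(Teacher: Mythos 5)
Your argument is correct in outline but follows a genuinely different route from the paper. The paper mollifies the truncations $u_k$ to get smooth $u_k^{(j)}$ on $\Omega_j$, chooses cutoffs $\varphi_j\in C_0^\infty(\Omega)$ with $\int_\Omega|\nabla\varphi_j|^2\,dx\le j^{-2\gamma-2}$, integrates by parts pointwise for the smooth triple product $(\A\nabla u_k^{(j)}\cdot\nabla u_k^{(j)})(u_k^{(j)})^{\gamma-1}\varphi_j$, and passes to the limit $j\to\infty$ then $k\to\infty$; the boundary term $\int_\Omega(u_k^{(j)})^\gamma(\A\nabla u_k^{(j)}\cdot\nabla\varphi_j)\,dx$ is killed by Schwarz together with the explicit decay of the cutoffs, and Theorem~\ref{ineq-a-b} is not invoked inside this lemma at all. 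You instead substitute $v=u^{(\gamma+1)/2}\in\dot W^{1,2}_0(\Omega)$, prove the distributional chain rule $\LL v=\mu$ by truncation (your (A)) and the energy identity $\int_\Omega v\,d\mu=\int_\Omega\A\nabla v\cdot\nabla v\,dx$ via $C_0^\infty$-approximation plus Theorem~\ref{ineq-a-b} (your (B)), then recover \eqref{formula:Energy} by a short algebraic computation. The paper's mollification handles smoothness and boundary decay in one stroke and keeps the lemma self-contained; your scheme is conceptually tidier and isolates exactly where the finiteness of $\E_\gamma[\omega]$ enters, and the final bootstrap remark (reducing $0<\gamma<1$ to $\gamma=1$ via Lemma~\ref{lemma1}) is a nice structural observation.

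Two steps you invoke without full proof deserve more care. The distributional chain rule $\LL(u_k^p)=p\,u_k^{p-1}\omega_k+p(1-p)\,u_k^{p-2}(\A\nabla u_k\cdot\nabla u_k)\,dx$ for a truncated $\A$-superharmonic function with measure-valued $\LL u_k$ is not the routine Sobolev chain rule: one has to know that $p\,u_k^{p-1}\varphi$ is admissible as a test function against $\omega_k$, which uses that $\omega_k$ is locally in the dual of $W^{1,2}(\Omega)$ and that $u_k$ is quasicontinuous; this should be spelled out or referenced. Second, the mass estimate $\omega_k(\{u=k\})\le\omega(\{u\ge k\})$, which you correctly flag as the crux, follows from $\omega_k(\Omega)=\omega(\Omega)$ together with $\omega_k=\omega$ on $\{u<k\}$ when $\omega(\Omega)<\infty$, but needs a localized argument for possibly infinite $\omega$. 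The paper's construction with decaying cutoffs sidesteps both of these points.
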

\begin{proof}
Suppose $\E_{\gamma}[\omega]<+\infty$. Therefore both \eqref{gamma-energy-green}
and \eqref{gamma-norm} holds, in views of Lemma \ref{lemma2} and 
uniform ellipticity condition \eqref{cond:ellipticity}.

For each $k \in \N$, we set $u_{k}=\min(u,k)$. Notice that 
$u_{k}$ is a positive $\A$-superharmonic function of the class 
$L^{\infty}(\Omega) \cap W_{loc}^{1,2}(\Omega)$. 
Denote the corresponding Riesz measure of  $u_{k}$ by $\omega_k:= \omega[u_{k}]$. 

Let $\lbrace u_{k}^{(j)} \rbrace_{j \geq k}$ be a sequence of mollified $u_{k}$, 
defined on  $\Omega_{j}:=\lbrace x \in \Omega : \text{dist}(x,\partial\Omega)>\frac{1}{j}\rbrace$.
Denote $\omega_{k}^{(j)}:= \LL u_{k}^{(j)}$.
In addition, for $j \geq k$, select $\varphi_{j}
\in  C_{0}^{\infty}(\Omega)$ so that
\[
\begin{split}
0 \leq\varphi_{j} \leq 1, \quad
& \text{supp}\,\varphi_{j} \subset \Omega_{j}, \quad 
\varphi_{j} \uparrow \chi_{\Omega} \;\; \text{as} \;\; j \rightarrow \infty,  \\ 
& \int_{\Omega} | \nabla \varphi_{j} |^{2}\;dx \leq \frac{1}{j^{2\gamma+2}}.
\end{split}
\]
Using integration by parts, we obtain 
\[
\begin{split}
\gamma & \int_{\Omega} (\A\nabla u_{k}^{(j)} \cdot \nabla u_{k}^{(j)}) (u_{k}^{(j)})^{\gamma -1} \varphi_{j}\;dx
= \int_{\Omega} \nabla \left( (u_{k}^{(j)})^{\gamma} \right)  \cdot \A\nabla u_{k}^{(j)} 
\varphi_{j}\;dx \\
&= \int_{\Omega} (u_{k}^{(j)})^{\gamma} \varphi_{j} \;d\omega_{k}^{(j)}  - \int_{\Omega} (u_{k}^{(j)})^{\gamma} (\A\nabla u_{k}^{(j)} \cdot \nabla \varphi_{j})\;dx.
\end{split}
\]
Letting first $j \rightarrow \infty$, and then $k \rightarrow \infty$, we see that
\[
\gamma \int_{\Omega} |\nabla u_{k}^{(j)}|^{2} (u_{k}^{(j)})^{\gamma -1} \varphi_{j}\;dx 
\longrightarrow \gamma \int_{\Omega} |\nabla u|^{2} u^{\gamma -1}\;dx
\]
and
\[
\int_{\Omega} (u_{k}^{(j)})^{\gamma} \varphi_{j} \;d\omega_{k}^{(j)}
\longrightarrow \int_{\Omega} u^{\gamma}\;d\omega
\]
by means of mollification, the Lebesgue dominated convergence theorem, 
and weak continuity of $\LL$ (Theorem \ref{weak_cont}).
Moreover, by Schwarz's inequality, the construction of $\varphi_{j}$,
and uniform ellipticity condition \eqref{cond:ellipticity}, we deduce
\[
\begin{split}
& \left| \int_{\Omega} (u_{k}^{(j)})^{\gamma} (\A\nabla u_{k}^{(j)} \cdot \nabla \varphi_{j})\;dx \right|
\leq M^{\frac{1}{2}}\left( \int_{\Omega} |\nabla \varphi_{j}|^{2}\;dx \right)^{\frac{1}{2}} \\ & \times 
\left( \int_{\Omega} |\nabla u_{k}^{(j)}|^{2} (u_{k}^{(j)})^{2\gamma}\;dx \right)^{\frac{1}{2}} \\ & \leq \frac{M^{\frac{1}{2}}}{j^{{\gamma+1}}}
\left( \int_{\Omega} |\nabla u_{k}^{(j)}|^{2} (u_{k}^{(j)})^{\gamma -1} (u_{k}^{(j)})^{\gamma +1}
\;dx \right)^{\frac{1}{2}} \\
&\leq \frac{M^{\frac{1}{2}} k^{\frac{\gamma+1}{2}}}{k^{\gamma+1}}
\left( \int_{\Omega} |\nabla u|^{2} u^{\gamma -1} \; dx \right)^{\frac{1}{2}} =\frac{M^{\frac{1}{2}}}{k^{\frac{\gamma+1}{2}}}
\left( \int_{\Omega} |\nabla u|^{2} u^{\gamma -1} \; dx \right)^{\frac{1}{2}},
\end{split}
\]
which converges to zero as $k \rightarrow \infty$. This proves \eqref{formula:Energy}.
\end{proof}

The following lemma shows, in particular,  that if 
$\E_{\gamma}[\omega]<+\infty$ for some $\gamma > 0$, then  $\omega \in \M^{+}_{0}(\Omega)$.

\begin{Lem}\label{lemma-cap}
Let $\gamma>0$ and $\omega \in \mathcal{M}^{+}(\Omega)$, 
and let $G$ be a positive Green function associated with $\LL$ on $\Omega$.
Suppose that $u:=\g\omega \in L^{\gamma}_{loc}(\Omega, d\mu)$. 
Then for every compact set 
$K \subset \Omega$,
\begin{equation}\label{cap-mu}
\omega(K) 
\leq [{\rm cap}(K)]^{\frac{\gamma}{1+\gamma}} \left( \int_{K} u^{\gamma} \;d\omega \right)^{\frac{1}{1+\gamma}}.
\end{equation}
In particular, $\omega \in \M^{+}_{0}(\Omega)$.
\end{Lem}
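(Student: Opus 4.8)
The plan is to deduce \eqref{cap-mu} from H\"older's inequality combined with the Schr\"odinger-type bound \eqref{ineq-b} applied to cut-off functions of $K$, after which the assertion $\omega\in\M^{+}_{0}(\Omega)$ is immediate. First I would dispose of the degenerate case $\omega\equiv0$, in which $u\equiv0$ and both sides of \eqref{cap-mu} vanish. When $\omega\not\equiv0$, the hypothesis $u=\g\omega\in L^{\gamma}_{loc}(\Omega, d\omega)$ forces $u\not\equiv+\infty$ (otherwise $\int_{K}u^{\gamma}\,d\omega=+\infty$ for any compact $K$ with $\omega(K)>0$), so $u$ is a positive $\A$-superharmonic function with Riesz measure $\omega$, and $u>0$ everywhere since $G>0$. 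Moreover $\int_{K}u^{\gamma}\,d\omega<+\infty$ and $u^{\gamma}=+\infty$ on the polar set $\{u=+\infty\}$ together give $\omega(\{u=+\infty\})=0$, so $u^{-1}$ is a finite positive function $\omega$-a.e.\ on every compact set.

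The first step is a H\"older split on a fixed compact set $K\subset\Omega$. Writing $1=u^{-\frac{\gamma}{1+\gamma}}\cdot u^{\frac{\gamma}{1+\gamma}}$ $\omega$-a.e.\ on $K$ and applying H\"older's inequality on $(K, d\omega)$ with the conjugate exponents $\frac{1+\gamma}{\gamma}$ and $1+\gamma$ gives
\[
\omega(K)=\int_{K} u^{-\frac{\gamma}{1+\gamma}}\, u^{\frac{\gamma}{1+\gamma}}\,d\omega
\le\Big(\int_{K}\frac{d\omega}{u}\Big)^{\frac{\gamma}{1+\gamma}}\Big(\int_{K} u^{\gamma}\,d\omega\Big)^{\frac{1}{1+\gamma}},
\]
in which the second factor is finite by hypothesis. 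It therefore suffices to establish the capacitary estimate $\int_{K}u^{-1}\,d\omega\le{\rm cap}(K)$.

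For the second step, fix $\varphi\in C_{0}^{\infty}(\Omega)$ with $\varphi\ge1$ on $K$. Since $u=\g\omega\not\equiv+\infty$, Theorem \ref{ineq-a-b} applies to $\varphi$ (a smooth, hence quasicontinuous, element of $\dot{W}_{0}^{1,2}(\Omega)$), and because $\varphi^{2}\ge\chi_{K}$ it yields
\[
\int_{K}\frac{d\omega}{u}\le\int_{\Omega}\varphi^{2}\,\frac{d\omega}{u}\le C\int_{\Omega}|\nabla\varphi|^{2}\,dx ,
\]
with $C=C(m,M)$. Taking the infimum over all such $\varphi$ and recalling the definition of ${\rm cap}(K)$ gives $\int_{K}u^{-1}\,d\omega\le C\,{\rm cap}(K)$; combined with the H\"older split this yields \eqref{cap-mu}, with $C=1$ in the case $\LL=-\Delta$ (test $\LL u=\omega$ against $\varphi^{2}/u$ and use $2ab-a^{2}\le b^{2}$) and a fixed ellipticity constant $C(m,M)^{\frac{\gamma}{1+\gamma}}$ in front otherwise, which does not affect what follows. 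If one prefers a self-contained argument rather than invoking \eqref{ineq-b}, the same estimate follows directly by testing the distributional identity $\LL u=\omega$ against $\varphi^{2}/u$, legitimized via the truncations $u_{k}=\min(u,k)\in L^{\infty}(\Omega)\cap\dot{W}_{loc}^{1,2}(\Omega)$ and the weak convergence of their Riesz measures $\omega[u_{k}]$ to $\omega$.

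Finally, if $K\subset\Omega$ is compact with ${\rm cap}(K)=0$, then the right-hand side of \eqref{cap-mu} equals $0\cdot\big(\int_{K}u^{\gamma}\,d\omega\big)^{\frac{1}{1+\gamma}}=0$ since $\int_{K}u^{\gamma}\,d\omega<+\infty$, whence $\omega(K)=0$; thus $\omega\in\M^{+}_{0}(\Omega)$ by definition, and in particular this applies whenever $\E_{\gamma}[\omega]=\int_{\Omega}u^{\gamma}\,d\omega<+\infty$. There is no serious obstacle here: the entire content is the choice of H\"older exponents that pairs the negative power of $u$ with \eqref{ineq-b}, and the only point demanding care is the reduction to $u\not\equiv+\infty$ (so that \eqref{ineq-b} is available) and, in the self-contained route, the truncation argument that justifies the test function $\varphi^{2}/u$ in the weak formulation of $\LL u=\omega$.
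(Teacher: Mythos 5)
Your argument follows essentially the same route as the paper: a H\"older split on $K$ with conjugate exponents $\tfrac{1+\gamma}{\gamma}$ and $1+\gamma$, then a capacitary bound on $\int_K u^{-1}\,d\omega$ obtained from \eqref{ineq-b} with cut-off test functions $\varphi\ge 1$ on $K$. You are in fact a bit more careful than the paper's proof: you dispose of the degenerate cases $\omega\equiv0$ and $u\equiv+\infty$ explicitly (which is needed since Theorem~\ref{ineq-a-b} requires $\omega\not\equiv0$ and $u\not\equiv+\infty$, and the hypothesis $u\in L^{\gamma}_{loc}(\Omega,d\omega)$ is what rules out $u\equiv+\infty$), and you correctly observe that for general $\LL$ the bound $\int_K u^{-1}\,d\omega\le C\,\mathrm{cap}(K)$ inherits the ellipticity constant $C=C(m,M)$ from \eqref{ineq-b}, so a harmless factor $C^{\gamma/(1+\gamma)}$ should in principle appear in \eqref{cap-mu}; the constant-free inequality \eqref{est1-mu} as written in the paper is exact only for $\LL=-\Delta$, or when one defines the capacity with $\int_\Omega \A\nabla\varphi\cdot\nabla\varphi\,dx$ in place of $\|\nabla\varphi\|_{L^2(\Omega)}^2$. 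Neither point changes the conclusion $\omega\in\M_0^{+}(\Omega)$.
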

\begin{proof}
Let $K$ be a compact subset of $\Omega$. By  \eqref{ineq-b}, we have 
\begin{equation}\label{est1-mu}
\int_{K} \frac{d\omega}{u} \leq \text{cap}(K).
\end{equation}
On the other hand, by H\"{o}lder's inequality, 
\begin{equation}\label{est2-mu}
\omega(K) 
= \int_{K} u^{\frac{-\gamma}{1+\gamma}} u^{\frac{\gamma}{1+\gamma}}\;d\omega
\leq \left( \int_{K} u^{-1} \;d\omega \right)^{\frac{\gamma}{1+\gamma}}
       \left( \int_{K} u^{\gamma} \;d\omega \right)^{\frac{1}{1+\gamma}}.
\end{equation}
Thus, \eqref{cap-mu} follows from \eqref{est1-mu} and \eqref{est2-mu}.
\end{proof}
\section{Positive solutions to sublinear elliptic equations}\label{Sect:SolnEq}
In this section, we prove our main result stated in Theorem \ref{thm-main} 
using the argument outlined earlier in Sec. \ref{Sect:Intro}. Its consequences are discussed 
here as well.

\begin{Def}\label{def:sol}
Let $q>0$ and $\sigma, \mu \in \mathcal{M}^{+}(\Omega)$. Let
$G$ be a positive Green function associated with $\LL$ on $\Omega$.
A solution $u$ to equation \eqref{main_problem} is understood in the sense that
$u$ is an $\A$-superharmonic function on $\Omega$ such that $u \in L^{q}_{loc}(\Omega, d\sigma)$ with $u \geq 0$ $d\sigma$-a.e., and
\begin{equation}\label{eq_int}
u = \g(u^{q}d\sigma) + \g\mu  \quad \text{in} \;\; \Omega.
\end{equation}
If further $u \in \dot{W}^{1,2}_{0}(\Omega)$, it is called a finite energy solution to \eqref{main_problem}.
\end{Def}

Our first theorem gives necessary and sufficient conditions for the existence of 
a positive solution $u \in L^{\gamma + q}(\Omega, d\sigma)$ ($\gamma>0$) to the
integral equation \eqref{eq_int} in the sublinear case $0<q<1$, 
under some mild assumptions on kernel $G$ satisfied by the Green function associated with
$\LL$ on $\Omega$.

\begin{Thm}\label{thm_int}
Let $0<q<1$, $\gamma >0$ and $\sigma, \mu \in \mathcal{M}^{+}(\Omega)$ 
with $\sigma \not\equiv 0$.
Suppose $G$ is a positive quasi-symmetric lower semicontinuous kernel on 
$\Omega \times \Omega$, which satisfies the WMP. If
\eqref{cond_sigma} and  \eqref{cond_mu-sigma} hold,
then there exists a positive solution $u \in L^{\gamma + q}(\Omega, d\sigma)$ to \eqref{eq_int}. The converse statement is valid without the quasi-symmetry assumption on $G$.
\end{Thm}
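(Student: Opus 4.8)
My plan is to dispose of necessity directly and to construct the solution in the sufficiency direction by a monotone iteration whose $L^{\gamma+q}(\Omega,d\sigma)$-norms are controlled by a scalar fixed-point estimate. The main inputs are Theorem \ref{thm_Ver17} (to turn \eqref{cond_sigma} into a weighted norm inequality), Theorem \ref{Thm:lowerbound} (for pointwise lower bounds), and monotone/dominated convergence to pass to the limit.

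\emph{Necessity} (this direction needs no quasi-symmetry). Given a positive solution $u\in L^{\gamma+q}(\Omega,d\sigma)$ of \eqref{eq_int}, I would first drop the nonnegative term $\g(u^{q}d\sigma)$ to get $u\ge\g\mu$, so $\g\mu\in L^{\gamma+q}(\Omega,d\sigma)$, i.e.\ \eqref{cond_mu-sigma}. Dropping instead $\g\mu\ge0$ gives $u\ge\g(u^{q}d\sigma)$; since $u\in L^{q}_{loc}(\Omega,d\sigma)$ (H\"older, as $\gamma+q>q$ and $\sigma$ is locally finite), $u$ is a positive solution of \eqref{int_ineq} with $\omega:=\sigma$, and Theorem \ref{Thm:lowerbound} (WMP only) yields $u\ge(1-q)^{\frac1{1-q}}h^{-\frac{q}{1-q}}(\g\sigma)^{\frac1{1-q}}$ pointwise. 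Raising to the power $\gamma+q$ and integrating $d\sigma$ then gives $\g\sigma\in L^{\frac{\gamma+q}{1-q}}(\Omega,d\sigma)$, i.e.\ \eqref{cond_sigma}.

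\emph{Sufficiency.} First I would invoke Theorem \ref{thm_Ver17}(i) with $\omega:=\sigma$, $r:=\gamma+q$, $p:=\frac{\gamma+q}{q}$ (so $p>1$, $0<r<p$, and $\tfrac{pr}{p-r}=\tfrac{\gamma+q}{1-q}$), which makes \eqref{cond_sigma} equivalent to the weighted norm inequality \eqref{weighted_norm_ineq_special0}: $\|\g(f\,d\sigma)\|_{L^{\gamma+q}(\sigma)}\le C_{0}\|f\|_{L^{(\gamma+q)/q}(\sigma)}$. Then I would set $u_{0}:=\g\mu$ and $u_{j+1}:=\g(u_{j}^{q}\,d\sigma)+\g\mu$; monotonicity of $t\mapsto t^{q}$ makes $\{u_{j}\}$ nondecreasing. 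With $a_{j}:=\|u_{j}\|_{L^{\gamma+q}(\sigma)}$ and $b:=\|\g\mu\|_{L^{\gamma+q}(\sigma)}<\infty$ (this is \eqref{cond_mu-sigma}), the (quasi-)triangle inequality in $L^{\gamma+q}(\sigma)$ together with $\|u_{j}^{q}\|_{L^{(\gamma+q)/q}(\sigma)}=a_{j}^{q}$ and \eqref{weighted_norm_ineq_special0} gives a scalar recursion $a_{j+1}\le K(a_{j}^{q}+b)$ with $K\ge1$ independent of $j$. Since $q<1$, the nondecreasing map $t\mapsto K(t^{q}+b)$ has a finite fixed point $t_{\ast}\ge b=a_{0}$, so $a_{j}\le t_{\ast}$ for all $j$. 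Letting $u:=\lim_{j}u_{j}$ and passing to the limit (monotone convergence, also inside the potential, using $u_{j}^{q}\uparrow u^{q}$), I obtain $u\in L^{\gamma+q}(\Omega,d\sigma)\cap L^{q}_{loc}(\Omega,d\sigma)$ and $u=\g(u^{q}d\sigma)+\g\mu$ in $\Omega$. When $\mu\not\equiv0$ we have $u\ge\g\mu>0$ everywhere, so $u$ is positive and we are done.

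It remains to cover $\mu\equiv0$, where the iteration above collapses to $u\equiv0$; here I would instead use \eqref{cond_sigma} and Theorem \ref{thm_Ver17}(ii) (with $r:=\gamma+q>q$) to produce a positive supersolution $w\in L^{\gamma+q}(\Omega,d\sigma)$ of $w\ge\g(w^{q}d\sigma)$, then run the \emph{decreasing} iteration $w_{0}:=w$, $w_{k+1}:=\g(w_{k}^{q}d\sigma)$, which converges by dominated convergence (dominated by $w$) to a solution $\tilde u=\g(\tilde u^{q}d\sigma)$, while $w_{k-1}\ge w_{k}$ forces $w_{k}\ge\g(w_{k}^{q}d\sigma)$ and hence, by Theorem \ref{Thm:lowerbound}, a uniform lower bound of each $w_{k}$ — and so of $\tilde u$ — by a positive multiple of $(\g\sigma)^{\frac1{1-q}}$. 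The step I expect to be the main obstacle is the uniform $L^{\gamma+q}(\Omega,d\sigma)$-bound on the iterates: one must balance the nonlinear self-interaction of the $\sigma$-term against the forcing $\g\mu$, and this is exactly where the two-weight condition \eqref{cond_mu-sigma} is needed together with the weighted norm inequality extracted from \eqref{cond_sigma} via Theorem \ref{thm_Ver17}(i) — the only place quasi-symmetry enters, which is why necessity requires no such hypothesis. Once the recursion $a_{j+1}\le K(a_{j}^{q}+b)$ is in place, sublinearity $q<1$ makes the bound automatic, and the residual work — justifying limits inside potentials, and the homogeneous case — is routine.
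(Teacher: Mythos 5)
Your proposal is correct and takes essentially the same route as the paper: monotone iteration $u_{0}=\g\mu$, $u_{j+1}=\g(u_{j}^{q}d\sigma)+\g\mu$, with the uniform $L^{\gamma+q}(\sigma)$-bound on the iterates extracted from \eqref{cond_sigma} via Theorem \ref{thm_Ver17}(i), passage to the limit by monotone convergence, and the converse direction via $u\geq\g\mu$ together with the lower bound of Theorem \ref{Thm:lowerbound}. The only cosmetic differences are that the paper closes the uniform bound by Young's inequality applied to $\|u_{j+1}\|\leq Cc\|u_{j+1}\|^{q}+c\|\g\mu\|$ rather than via the fixed point of $t\mapsto K(t^{q}+b)$ (equivalent), and in the homogeneous case the paper iterates upward from $\kappa(\g\sigma)^{1/(1-q)}$ with $\kappa$ small, citing \cite{V}, whereas you iterate downward from the supersolution supplied by Theorem \ref{thm_Ver17}(ii); both are standard and both rely on the same lower bound to ensure positivity of the limit.
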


\begin{proof}
Suppose \eqref{cond_sigma} and \eqref{cond_mu-sigma} hold.
In the homogeneous case $\mu \equiv 0$, we can construct a monotone increasing 
sequence of positive functions 
$\lbrace  u_j \rbrace_{j=0}^{\infty} \subset L^{\gamma +q}(\Omega, d\sigma)$ by setting 
\[
u_0 := \kappa \left( \g\sigma \right)^{\frac{1}{1-q}} \quad \text{and} 
\quad u_{j+1} := \g (u^{q}_{j} d\sigma), \quad \text{for} \;\;  j \in \mathbb{N}_{0},
\]
where $\kappa>0$ is chosen to be sufficiently small. Then its pointwise limit 
$u:=\lim_{j \rightarrow \infty} u_j$ is a positive solution of the class
$L^{\gamma +  q}(\Omega, d\sigma)$ to \eqref{eq_int}, by the monotone convergence theorem  
(see \cite[Theorem 1.1\,(ii)]{V} for more details).

In the inhomogeneous case  $\mu \not\equiv 0$, we set  
\[
u_0 := \g\mu,  \qquad u_{j+1} := \g (u^{q}_{j} d\sigma) + \g\mu, \quad 
\text{for} \;\; j \in \mathbb{N}_{0}.
\]
Observe that $u_0 > 0$ since $\mu \not\equiv 0$, and
\[
u_{1} = \g (u^{q}_{0} d\sigma) + u_0 \geq u_0.
\]
Suppose $u_0 \leq u_1 \leq \ldots \leq u_j$ for some $j \in \N$. Then
\[
u_{j+1} 
= \g (u^{q}_{j} d\sigma) + \g\mu 
\geq \g (u^{q}_{j-1} d\sigma) + \g\mu
= u_{j}.
\]
Hence, by induction, $\lbrace  u_j\rbrace_{j=0}^{\infty}$ is an increasing sequence of positive functions. 
Further, each $u_j \in  L^{\gamma + q}(\Omega, d\sigma)$. Notice that 
\[
u_0 = \g \mu \in L^{\gamma + q}(\Omega, d\sigma),
\]
by the assumption \eqref{cond_mu-sigma}.
Suppose $u_0, \ldots, u_j \in L^{\gamma +q}(\Omega, d\sigma)$ for some $j \in \N$. Observe that
\begin{equation}\label{est1_thm_int}
\begin{split}
\Vert u_{j+1} \Vert_{L^{\gamma + q}(\Omega,\; d\sigma)}
&= \big\Vert \g (u^{q}_{j} d\sigma) + \g \mu \big\Vert_{L^{\gamma + q}(\Omega, \; d\sigma)} \\
&\leq c \big\Vert \g (u^{q}_{j} d\sigma) \big\Vert_{L^{\gamma + q}(\Omega, \; d\sigma)} 
    + c \big\Vert \g\mu  \big\Vert_{L^{\gamma + q}(\Omega, \; d\sigma)},
\end{split}
\end{equation}
where $c = \text{max} ( 1, 2^{\frac{1-\gamma-q}{\gamma + q}} )$. 
In view of Theorem \ref{thm_Ver17}, the assumption \eqref{cond_sigma} is equivalent to the 
weighted norm inequality \eqref{weighted_norm_ineq_special} with 
$\omega=\sigma$ and $r=\gamma+q$.
Therefore, we can estimate the first term on the right-hand side of \eqref{est1_thm_int} by applying
 \eqref{weighted_norm_ineq_special} with 
$f=u^{q}_{j} \in L^{\frac{\gamma + q}{q}}(\Omega, d\sigma)$,
\begin{equation}\label{est2_thm_int}
\begin{split}
& \big\Vert \g (u^{q}_{j} d\sigma) \big\Vert_{L^{\gamma + q}(\Omega, \;d\sigma)}
\leq C \left( \int_{\Omega} u^{\gamma + q}_{j}\;d\sigma \right)^{\frac{q}{\gamma + q}} \\
& \leq C \left( \int_{\Omega} u^{\gamma + q}_{j+1}\;d\sigma \right)^{\frac{q}{\gamma + q}} = C \Vert u_{j+1}\Vert^{q}_{L^{\gamma + q}(\Omega, \;d\sigma)}.
\end{split}
\end{equation}
By \eqref{est1_thm_int} and \eqref{est2_thm_int}, we have
\begin{equation} \label{est3_thm_int}
\Vert u_{j+1} \Vert_{L^{\gamma + q}(\Omega, \; d\sigma)}
\leq Cc \Vert u_{j+1}\Vert^{q}_{L^{\gamma + q}(\Omega, \;d\sigma)} 
+ c \big\Vert \g \mu  \big\Vert_{L^{\gamma + q}(\Omega, \;d\sigma)}.
\end{equation}
We estimate the first term on the right-hand side of \eqref{est3_thm_int} using Young's inequality,
\begin{equation}\label{est4_thm_int}
Cc \Vert u_{j+1}\Vert^{q}_{L^{\gamma + q}(\Omega, \;d\sigma)}
\leq q \Vert u_{j+1}\Vert_{L^{\gamma + q}(\Omega, \;d\sigma)} +  (1-q)(Cc)^{\frac{1}{1-q}}.
\end{equation}
Hence, by \eqref{est3_thm_int} and \eqref{est4_thm_int}, we obtain
\begin{equation}\label{est5_thm_int}
\Vert u_{j+1} \Vert_{L^{\gamma + q}(\Omega, \; d\sigma)} 
\leq (Cc)^{\frac{1}{1-q}} + \frac{c}{1-q} \big\Vert \g\mu
\big\Vert_{L^{\gamma + q}(\Omega, \; d\sigma)} < +\infty.
\end{equation}
By induction, we have shown that each $u_j \in  L^{\gamma + q}(\Omega, d\sigma)$. 
Finally, applying the monotone convergence theorem to the sequence 
$\lbrace u_{j} \rbrace_{j=0}^{\infty}$, we see that the pointwise limit $u:=\lim_{j \rightarrow \infty} u_j$
exists so that $u>0$, $u \in L^{\gamma + q}(\Omega, d\sigma)$, and satisfies \eqref{eq_int}.

Conversely, if there exists a positive solution $u \in L^{\gamma +  q}(\Omega, d\sigma)$ 
to \eqref{eq_int}, it is clear that \eqref{cond_mu-sigma} holds. Moreover, 
\eqref{cond_sigma} follows from the global pointwise lower bound \eqref{lowerbound}:
\[
u(x) \geq c [ \g \sigma (x) ]^{\frac{1}{1-q}}, \quad  \forall x \in \Omega,
\]
which does not require quasi-symmetry of $G$ (see \cite{GV}).
\end{proof}

An essential link between conditions \eqref{cond_sigma}, 
\eqref{cond_mu} and \eqref{cond_mu-sigma}, will be obtained in the next lemma.
It is an extension of \cite[Lemma 5.3]{SV} in the case $\gamma=1$; moreover,    $G$ does not need to be quasi-symmetric 
due to Theorem \ref{Thm:iterated}.

\begin{Lem}\label{lem_relation}
Let $0<q<1$, $\gamma >0$, and $\sigma, \mu \in \mathcal{M}^{+}(\Omega)$.
Suppose $G$ is a positive lower semicontinuous kernel
on $\Omega \times \Omega$, which satisfies the WMP.
Then conditions \eqref{cond_sigma} and \eqref{cond_mu}
imply \eqref{cond_mu-sigma}.
\end{Lem}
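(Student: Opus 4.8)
The goal is to prove that $I:=\int_\Omega (\g\mu)^{\gamma+q}\,d\sigma<+\infty$. I would follow the scheme used for $\gamma=1$ in \cite[Lemma~5.3]{SV}, but replace the weighted norm inequalities invoked there (whose characterization uses quasi-symmetry of $G$) by the iterated pointwise estimates of Theorem~\ref{Thm:iterated}, which only require the WMP; this is precisely the point of the remark preceding the lemma. Two preliminary reductions: if $\mu\equiv0$ there is nothing to prove, so assume $\mu\not\equiv0$; and by Lemma~\ref{lemma-cap} (applied to $\sigma$ with exponent $\tfrac{\gamma+q}{1-q}$, and to $\mu$ with exponent $\gamma$) both $\sigma$ and $\mu$ lie in $\M_0^+(\Omega)$, so $\g\mu$ is a positive $\A$-superharmonic function which is finite $\sigma$-a.e. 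Truncating $\g\mu$ and $\g\sigma$ at level $k$ and exhausting $\Omega$ by compacta makes every integral below finite a priori, and the conclusion is recovered by monotone convergence; I suppress this routine bookkeeping.

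\emph{The case $\gamma+q>1$.} Apply \eqref{iterated1} with $s=\gamma+q$ and $\omega=\mu$, integrate $d\sigma$, and invoke Fubini's theorem (the Green kernel is symmetric, so $y\mapsto\int_\Omega G(x,y)\,d\sigma(x)$ is again $\g\sigma$) to obtain
\[
I\ \le\ (\gamma+q)\,h^{\gamma+q-1}\int_\Omega (\g\mu)^{\gamma+q-1}\,\g\sigma\,d\mu .
\]
Hölder's inequality with exponents $\tfrac{\gamma}{\gamma+q-1}$ and $\tfrac{\gamma}{1-q}$ (admissible since $0<\gamma+q-1<\gamma$), together with \eqref{cond_mu}, bounds the right side by a constant times $\big(\int_\Omega(\g\mu)^\gamma d\mu\big)^{\frac{\gamma+q-1}{\gamma}}\big(\int_\Omega(\g\sigma)^{\gamma/(1-q)}d\mu\big)^{\frac{1-q}{\gamma}}$. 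To the last factor I apply \eqref{iterated1} once more, now with $s=\tfrac{\gamma}{1-q}$ (which is $\ge 1$ since $\gamma+q>1$) and $\omega=\sigma$, integrate $d\mu$, use Fubini again, and then Hölder with exponents $\tfrac{\gamma+q}{\gamma+q-1}$ and $\gamma+q$ together with \eqref{cond_sigma}; this yields
\[
\int_\Omega(\g\sigma)^{\gamma/(1-q)}\,d\mu\ \le\ C\,\Big(\int_\Omega(\g\sigma)^{\frac{\gamma+q}{1-q}}\,d\sigma\Big)^{\frac{\gamma+q-1}{\gamma+q}}\, I^{\frac{1}{\gamma+q}} .
\]
Combining the two displays gives an inequality $I\le C\,A^{a}B^{b}\,I^{\theta}$, where $A$, $B$ are the finite quantities from \eqref{cond_sigma}, \eqref{cond_mu} and $\theta=\tfrac{1-q}{\gamma(\gamma+q)}$. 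One checks $\theta<1\iff\gamma(\gamma+q)>1-q\iff(\gamma+1)(\gamma+q-1)>0\iff\gamma+q>1$; hence in this range $I^{1-\theta}\le CA^{a}B^{b}$, so $I<+\infty$.

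\emph{The case $0<\gamma+q\le1$, which I expect to be the main obstacle.} Here $\theta\ge1$, the bootstrap degenerates, and \eqref{iterated1} cannot even be applied to $(\g\mu)^{\gamma+q}$ directly. The remedy is that now $0<\gamma\le1$, so, exactly as in Lemma~\ref{lemma1}, $(\g\mu)^\gamma$ is itself a Green potential, $(\g\mu)^\gamma=\g\nu$ for its Riesz measure $\nu$, with $\E[\nu]\asymp\E_\gamma[\mu]=\int_\Omega(\g\mu)^\gamma\,d\mu<+\infty$ by \eqref{cond_mu}. Since $(\g\mu)^{\gamma+q}=(\g\nu)^{(\gamma+q)/\gamma}$ and $(\gamma+q)/\gamma>1$, the estimate \eqref{iterated1} applies to this power, and I would rerun the chain with $\nu$ in the role of $\mu$: one application of \eqref{iterated1} and Fubini gives
\[
I\ \le\ C\int_\Omega (\g\mu)^{q}\,\g\sigma\,d\nu ,
\]
which is then handled by Hölder and further applications of \eqref{iterated1} (to $\g\sigma$, and, in the subcase $q>\gamma$, to $(\g\nu)^{q/\gamma}$ as well), using $\int_\Omega(\g\mu)^\gamma\,d\nu=\E[\nu]<+\infty$ and \eqref{cond_sigma}. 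Arranging the resulting exponents so that the argument closes in every sub-range of $(\gamma,q)$ is the genuinely delicate part; I expect the final step of that arrangement — again an absorption of the form $I\le CI^{\theta'}$ with $\theta'<1$ — to be the crux, and this is where a careful choice of the intermediate Hölder exponents (and possibly the observation that, for $\gamma+q\le1$, $(\g\sigma)^{\gamma/(1-q)}$ is also a Green potential, its exponent being $\le1$) is essential. In all cases, every step is either a pointwise iterated estimate (Theorem~\ref{Thm:iterated}) or a Fubini/Hölder manipulation, so quasi-symmetry \eqref{quasi-sym} of $G$ is never invoked; the converse implication, $\eqref{cond_mu-sigma}\Rightarrow$ nothing new here, is not needed.
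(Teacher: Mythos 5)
Your Case $\gamma+q>1$ coincides with the paper's argument: two applications of \eqref{iterated1}, Fubini, H\"older, and absorption with exponent $\theta=\tfrac{1-q}{\gamma(\gamma+q)}<1$. The genuine gap is the range $\gamma+q\le 1$, which you correctly flag as the obstacle but do not close. Moreover, the route you sketch there steps outside the lemma's hypotheses: passing to the Riesz measure $\nu$ of the $\A$-superharmonic function $(\g\mu)^\gamma$ (via Lemma~\ref{lemma1}), and invoking Lemma~\ref{lemma-cap}, both require $G$ to be the Green function of a uniformly elliptic divergence-form operator, whereas Lemma~\ref{lem_relation} is stated for an arbitrary positive lower semicontinuous kernel satisfying the WMP --- deliberately so, since it feeds into Theorem~\ref{thm-main2}, which is formulated for general quasi-symmetric kernels at the level of integral equations. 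Even if your Riesz-measure bootstrap closed, it would only prove a Green-function special case of the lemma.

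The idea you are missing for $\gamma+q<1$ is a H\"older decomposition that uses only the \emph{reverse} iterated estimate \eqref{iterated2}, which is the admissible one when the powers drop below $1$. Write
\[
\int_\Omega(\g\mu)^{\gamma+q}\,d\sigma=\int_\Omega(\g\mu)^{\gamma+q}\,F^{a-1}F^{1-a}\,d\sigma,\qquad F:=(\g\sigma)^{\frac{\gamma+q}{1-q}},\quad a:=\gamma+q,
\]
apply H\"older with exponents $1/a$ and $1/(1-a)$, and bound the resulting factor $\int_\Omega \g\mu\,(\g\sigma)^{\frac{\gamma+q-1}{1-q}}\,d\sigma$ by Fubini together with \eqref{iterated2} for $\omega=\sigma$, $s=\tfrac{\gamma}{1-q}\le 1$ (admissible precisely because $\gamma+q\le1$). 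A further H\"older step plus \eqref{iterated2} with $\omega=\mu$, $s=\gamma+q$ then yields an absorption with exponent $\theta'=\tfrac{\gamma(\gamma+q)}{1-q}<1$, the reciprocal of Case~1's exponent. The boundary case $\gamma+q=1$ degenerates for both decompositions and requires a third argument, in the paper via a free H\"older parameter $a\in(\tfrac{1}{2-q},1)$. All three cases use only Theorem~\ref{Thm:iterated}, Fubini and H\"older, which is what keeps the lemma at the level of general kernels.
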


\begin{proof}
Without loss of generality, we may assume $\sigma, \mu \not\equiv 0$. 
Consider the following three cases:

$\bullet$ Case 1: $\gamma + q > 1$. Applying the iterated inequality
\eqref{iterated1} with $\omega= \mu$ and $s=\gamma +q$, together with 
Fubini's theorem and H\"{o}lder's inequality with the exponents
$\frac{\gamma}{\gamma+q-1}$ and $\frac{\gamma}{1-q}$, we obtain
\begin{equation}\label{est2_lem_relation}
\begin{split}
&\int_{\Omega} (\g \mu)^{\gamma+q} \;d\sigma 
\leq c \int_{\Omega} \g \left( (\g\mu)^{\gamma+q-1}\;d\mu\right) \;d\sigma \\
&= c \int_{\Omega} (\g \mu)^{\gamma+q-1} \g\sigma \;d\mu 
\\&\leq c \left[ \int_{\Omega} (\g \mu)^{\gamma}\;d\mu \right]^{\frac{\gamma+q-1}{\gamma}}
\left[ \int_{\Omega} (\g\sigma)^{\frac{\gamma}{1-q}} \;d\mu \right]^{\frac{1-q}{\gamma}}.
\end{split}
\end{equation}
The second integral on the right-hand side of \eqref{est2_lem_relation} is estimated by 
a similar argument as above.  
In fact, applying \eqref{iterated1} again with 
$\omega= \sigma$ and $s =\frac{\gamma}{1-q}$, along with Fubini's theorem 
and H\"{o}lder's inequality with the exponents
$ \frac{\gamma+q}{\gamma+q-1}$  and $\gamma + q$, we deduce
\begin{equation}\label{est3_lem_relation}
\begin{split}
\int_{\Omega} (\g \sigma)^{\frac{\gamma}{1-q}}\;d\mu
&\leq c \int_{\Omega} \g \left((\g\sigma)^{\frac{\gamma}{1-q}-1} d\sigma\right) \;d\mu \\
&= c \int_{\Omega} (\g \sigma)^{\frac{\gamma+q-1}{1-q}} \,\g\mu \;d\sigma \\
&\leq c \left[ \int_{\Omega} (\g \sigma)^{\frac{\gamma+q}{1-q}} \;d\sigma \right]
^{\frac{\gamma+q-1}{\gamma+q}}
\left[ \int_{\Omega} (\g \mu)^{\gamma+q} \;d\sigma \right]^{\frac{1}{\gamma+q}}.
\end{split}
\end{equation}
By \eqref{est2_lem_relation} and \eqref{est3_lem_relation}, we have 
\begin{equation}\label{est4_lem_relation}
\begin{split}
 \left[ \int_{\Omega} (\g \mu)^{\gamma+q}\;d\sigma \right]^{1 - \frac{1-q}{\gamma(\gamma+q)}}
& \leq c \left[ \int_{\Omega} (\g \mu)^{\gamma} \;d\mu \right]^{\frac{\gamma+q-1}{\gamma}} \\ & \times 
 \left[ \int_{\Omega} (\g \sigma)^{\frac{\gamma+q}{1-q}} \;d\sigma \right]^{\frac{(\gamma+q-1)(1-q)}{\gamma(\gamma+q)}},
\end{split}
\end{equation}
which is finite by \eqref{cond_sigma} and \eqref{cond_mu}, and hence \eqref{cond_mu-sigma} holds.

$\bullet$ Case 2: $\gamma + q <1$. Write
\[
\int_{\Omega} \left(\g\mu \right)^{\gamma+q}\;d\sigma
= \int_{\Omega} \left(\g\mu \right)^{\gamma+q} F^{a-1} F^{1-a}\;d\sigma,
\]
where $0<a<1$ and $F$ is a positive measurable function to be determined later. 
Applying H\"{o}lder's inequality with the exponents $\frac{1}{a}$ and $\frac{1}{1-a}$, we get
\begin{equation}\label{est1_case2}
\int_{\Omega} \left(\g\mu \right)^{\gamma+q}\;d\sigma
\leq \left( \int_{\Omega} \left(\g\mu \right)^{\frac{\gamma+q}{a}} F^{\frac{a-1}{a}}\;d\sigma 
\right)^{a} \left( \int_{\Omega} F \;d\sigma \right)^{1-a}.
\end{equation}
Setting $F=\left(\g\sigma\right)^{\frac{\gamma+q}{1-q}}$ and $a =\gamma+q$ in
\eqref{est1_case2}, we obtain
\begin{equation}\label{est2_case2}
\begin{split}
\int_{\Omega} \left(\g\mu \right)^{\gamma+q}\;d\sigma
& \leq \left( \int_{\Omega} \g\mu \left(\g\sigma\right)^{\frac{\gamma+q-1}{\gamma+q}}\;d\sigma 
\right)^{\gamma+q} \\ &\times \left( \int_{\Omega} \left(\g\sigma\right)^{\frac{\gamma+q}{1-q}} \;d\sigma 
\right)^{1-\gamma-q}.
\end{split}
\end{equation}
The first integral on the right-hand side of \eqref{est2_case2} is estimated by using Fubini's theorem, 
followed by inequality \eqref{iterated2} with 
$\omega= \sigma $ and $s =\frac{\gamma}{1-q}$, 
\begin{equation}\label{est3_case2}
\begin{split}
\int_{\Omega} \g\mu \left(\g\sigma\right)^{\frac{\gamma+q-1}{\gamma+q}}\;d\sigma
&= \int_{\Omega} \g\left((\g\sigma)^{\frac{\gamma+q-1}{\gamma+q}} d\sigma\right)\;d\mu\\
&\leq c \int_{\Omega} (\g\sigma)^{\frac{\gamma}{1-q}}\;d\mu.
\end{split}
\end{equation}
As above, we deduce
\begin{equation*}
\begin{split}
& \int_{\Omega} (\g\sigma)^{\frac{\gamma}{1-q}}\;d\mu
\leq \left( \int_{\Omega} \g\sigma (\g\mu)^{\gamma+q-1}\;d\mu\right)^{\frac{\gamma}{1-q}} 
\left( \int_{\Omega} (\g\mu)^{\gamma}\;d\mu \right)^{\frac{1-\gamma-q}{1-q}} \\
&= \left( \int_{\Omega} \g \left( (\g\mu)^{\gamma+q-1}d\mu\right)\;d\sigma \right)^{\frac{\gamma}{1-q}} 
\left( \int_{\Omega} (\g\mu)^{\gamma}\;d\mu \right)^{\frac{1-\gamma-q}{1-q}} \\
&\leq c \left( \int_{\Omega} (\g\mu)^{\gamma+q}\;d\sigma \right)^{\frac{\gamma}{1-q}} 
\left( \int_{\Omega} (\g\mu)^{\gamma}\;d\mu \right)^{\frac{1-\gamma-q}{1-q}}.
\end{split}
\end{equation*}
Combining the preceding estimates, we have
\begin{equation}\label{est5_case2}
\begin{split}
\left( \int_{\Omega} \left(\g\mu \right)^{\gamma+q}\;d\sigma \right)^{1-\frac{\gamma(\gamma+q)}{1-q}}
& \leq c \left[ \int_{\Omega} (\g \mu)^{\gamma} \;d\mu \right]^{\frac{(1-\gamma-q)(\gamma+q)}{1-q}}\\ & \times
 \left[ \int_{\Omega} (\g \sigma)^{\frac{\gamma+q}{1-q}} \;d\sigma \right]^{1-\gamma-q}.
 \end{split}
\end{equation}
This proves \eqref{cond_mu-sigma}, since both integrals on the right-hand side of \eqref{est5_case2} are finite by \eqref{cond_sigma} and \eqref{cond_mu}.

$\bullet$ Case 3: $\gamma +q =1$. Fix a positive number $\frac{1}{2-q}<a<1$. 
Applying H\"{o}lder's inequality with the exponents $\frac{1}{a}$ and $\frac{1}{1-a}$ we have
\begin{equation} \label{est1_case3}
\begin{split}
&\int_{\Omega} \g\mu \;d\sigma 
= \int_{\Omega} \g\mu (\g\sigma)^{\frac{a-1}{1-q}} (\g\sigma)^{\frac{1-a}{1-q}} \;d\sigma  \\
&\leq \left( \int_{\Omega} (\g\mu)^{\frac{1}{a}} (\g\sigma)^{\frac{a-1}{a(1-q)}}\;d\sigma \right)^{a}
         \left( \int_{\Omega} (\g\sigma)^{\frac{1}{1-q}} \;d\sigma \right)^{1-a}.
\end{split}
\end{equation}
We estimate the first integral on the right-hand side of \eqref{est1_case3} using inequalities \eqref{iterated1} and \eqref{iterated2}, together with Fubini's theorem and H\"{o}lder's inequality with the exponents
$\frac{a(1-q)}{1-a}$ and $\frac{a(1-q)}{a(2-q)-1}$, 
\begin{equation*}
\begin{split}
& \int_{\Omega} (\g\mu)^{\frac{1}{a}} (\g\sigma)^{\frac{a-1}{a(1-q)}}\;d\sigma 
\le c  \int_{\Omega} \g [(\g\mu)^{\frac{1-a}{a}}d \mu] (\g\sigma)^{\frac{a-1}{a(1-q)}}\;d\sigma \\
&= c \int_{\Omega} (\g\mu)^{\frac{1-a}{a}} \g \left( (\g\sigma)^{\frac{a-1}{a(1-q)}}\;d\sigma \right)\;d\mu \leq c \int_{\Omega} (\g\mu)^{\frac{1-a}{a}}  (\g\sigma)^{\frac{a-1}{a(1-q)}+1}\;d\mu \\
&\leq c \left( \int_{\Omega} (\g\mu)^{1-q} \;d\mu \right)^{\frac{1-a}{a(1-q)}}
            \left( \int_{\Omega} \g\sigma \;d\mu \right)^{\frac{a(2-q)-1}{a(1-q)}} \\
&= c \left( \int_{\Omega} (\g\mu)^{1-q} \;d\mu \right)^{\frac{1-a}{a(1-q)}}
            \left( \int_{\Omega} \g\mu \;d\sigma \right)^{\frac{a(2-q)-1}{a(1-q)}}.
\end{split}
\end{equation*}
Combining the preceding estimates, we deduce 
\begin{equation}\label{est3_case3}
\begin{split}
\left( \int_{\Omega} \g\mu \;d\sigma \right)^{1- \frac{a(2-q)-1}{(1-q)}}
& \leq c \left( \int_{\Omega} (\g\mu)^{1-q} \;d\mu \right)^{\frac{1-a}{(1-q)}}\\ & \times
\left( \int_{\Omega} (\g\sigma)^{\frac{1}{1-q}} \;d\sigma \right)^{1-a},
\end{split}
\end{equation}
which is finite by \eqref{cond_sigma} and \eqref{cond_mu}. Thus \eqref{cond_mu-sigma} holds.
\end{proof}

We are now prepared to show that conditions \eqref{cond_sigma} and \eqref{cond_mu} are
necessary and sufficient for the existence of a positive solution
$u \in L^{\gamma +q}(\Omega, d\sigma) \cap L^{\gamma}(\Omega, d\mu)$ 
to integral equation \eqref{eq_int}, under the same restrictions on the kernel $G$ as above.

\begin{Thm}\label{thm-main2}
Let $0<q<1$, $\gamma >0$, and let $\sigma, \mu \in \mathcal{M}^{+}(\Omega)$ 
with $\sigma \not\equiv 0$. Suppose $G$ is a positive, lower semicontinuous, 
quasi-symmetric  kernel 
on $\Omega \times \Omega$, which satisfies the WMP.
Suppose \eqref{cond_sigma} and \eqref{cond_mu} hold. Then there exists a positive solution 
$u \in L^{\gamma +q}(\Omega, d\sigma) \cap L^{\gamma}(\Omega, d\mu)$ to \eqref{eq_int}. The converse statement is also valid without the quasi-symmetry 
assumption on $G$.
\end{Thm}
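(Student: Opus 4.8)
The plan is to combine the two previous results in this section, namely Theorem~\ref{thm_int} and Lemma~\ref{lem_relation}, with the pointwise lower bound from Theorem~\ref{Thm:lowerbound}. First I would dispose of the forward direction. Assume \eqref{cond_sigma} and \eqref{cond_mu} hold. By Lemma~\ref{lem_relation}, the two-weight condition \eqref{cond_mu-sigma}, i.e. $\g\mu \in L^{\gamma+q}(\Omega, d\sigma)$, is automatically satisfied; here the WMP on $G$ is all that is needed. Now \eqref{cond_sigma} together with \eqref{cond_mu-sigma} is exactly the hypothesis of Theorem~\ref{thm_int}, so there exists a positive solution $u \in L^{\gamma+q}(\Omega, d\sigma)$ to \eqref{eq_int}; this step uses the quasi-symmetry of $G$ (via Theorem~\ref{thm_Ver17}, which supplies the weighted norm inequality \eqref{weighted_norm_ineq_special}). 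It remains to upgrade $u$ to membership in $L^{\gamma}(\Omega, d\mu)$. Since $u = \g(u^q d\sigma) + \g\mu \geq \g\mu$ pointwise, I get $\int_\Omega u^\gamma \, d\mu$ bounded below by $\int_\Omega (\g\mu)^\gamma \, d\mu = \E_\gamma[\mu]$, which is the wrong direction; instead I should bound $u$ from above. Because $u$ is the increasing limit of the iterates $u_j$ built in the proof of Theorem~\ref{thm_int}, and the uniform estimate \eqref{est5_thm_int} controls $\|u_{j+1}\|_{L^{\gamma+q}(\Omega, d\sigma)}$, the limit satisfies $\int_\Omega u^{\gamma+q} d\sigma < \infty$. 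To get the $\mu$-integrability I would use $u = \g(u^q d\sigma) + \g\mu$ and estimate $\int_\Omega u^\gamma \, d\mu \leq c \int_\Omega (\g(u^q d\sigma))^\gamma d\mu + c \int_\Omega (\g\mu)^\gamma d\mu$; the second term is finite by \eqref{cond_mu}, and the first is handled by an iterated-inequality plus H\"older argument analogous to the ones in Lemma~\ref{lem_relation}, reducing it to $\int_\Omega u^{\gamma+q} d\sigma$, $\int_\Omega (\g\mu)^\gamma d\mu$, and $\int_\Omega (\g\sigma)^{(\gamma+q)/(1-q)} d\sigma$, all of which are finite.

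For the converse, suppose $u \in L^{\gamma+q}(\Omega, d\sigma) \cap L^{\gamma}(\Omega, d\mu)$ is a positive solution to \eqref{eq_int}. Condition \eqref{cond_mu} is immediate, since $u \geq \g\mu$ pointwise gives $\int_\Omega (\g\mu)^\gamma d\mu \leq \int_\Omega u^\gamma d\mu < \infty$. For \eqref{cond_sigma}, I invoke Theorem~\ref{Thm:lowerbound}: from $u \geq \g(u^q d\sigma)$ (dropping the nonnegative term $\g\mu$) we get the global bound $u(x) \geq c\,[\g\sigma(x)]^{1/(1-q)}$ for all $x \in \Omega$, with $c = (1-q)^{1/(1-q)} h^{-q/(1-q)}$. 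Raising to the power $\gamma+q$ and integrating against $d\sigma$ yields $\int_\Omega (\g\sigma)^{(\gamma+q)/(1-q)} d\sigma \leq c^{-(\gamma+q)} \int_\Omega u^{\gamma+q} d\sigma < \infty$, which is \eqref{cond_sigma}. Neither part of the converse uses quasi-symmetry, only the WMP, which matches the statement.

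The main obstacle is the $L^\gamma(\Omega, d\mu)$ bound on the constructed solution in the forward direction. The natural inequality $u \geq \g\mu$ goes the wrong way, so one cannot simply quote $\E_\gamma[\mu] < \infty$; one genuinely has to pass the $L^{\gamma+q}(d\sigma)$ control of $\g(u^q d\sigma)$ over to an $L^\gamma(d\mu)$ estimate. This is a weight-transference step of exactly the type carried out in Lemma~\ref{lem_relation} (passing between integration against $d\sigma$ and against $d\mu$ using \eqref{iterated1}, \eqref{iterated2}, Fubini, and H\"older), and the bookkeeping of exponents will again split into the cases $\gamma+q \gtrless 1$ and $\gamma+q = 1$. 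An alternative, possibly cleaner, route is to observe that once $u \in L^{\gamma+q}(\Omega, d\sigma)$ is known, $u^q d\sigma$ is a measure with $\g(u^q d\sigma) \in L^{\gamma+q}(\Omega, d\sigma)$; combining this with \eqref{cond_mu} and applying Lemma~\ref{lem_relation} with $\mu$ replaced by $u^q d\sigma$ (or a symmetric variant of it) gives $\g(u^q d\sigma) \in L^\gamma(\Omega, d\mu)$ directly, whence $u = \g(u^q d\sigma) + \g\mu \in L^\gamma(\Omega, d\mu)$. I would reach for whichever of these two formulations makes the exponent arithmetic least painful.
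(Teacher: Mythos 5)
Your overall strategy matches the paper's: Lemma~\ref{lem_relation} upgrades \eqref{cond_sigma} and \eqref{cond_mu} to the two-weight condition \eqref{cond_mu-sigma}, Theorem~\ref{thm_int} then produces a positive solution $u\in L^{\gamma+q}(\Omega,d\sigma)$, and the converse falls out of the lower bound $u\ge c(\g\sigma)^{1/(1-q)}$ together with $u\ge\g\mu$. So there is no disagreement on the skeleton.

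The gap is that you identify the crux — showing $\int_\Omega \big[\g(u^q\,d\sigma)\big]^\gamma\,d\mu<\infty$ — and then leave it as a sketch, with predictions that do not quite match how the estimate actually goes. You guess the exponent bookkeeping will split on $\gamma+q\gtrless 1$ (by analogy with Lemma~\ref{lem_relation}), but the paper's case split here is on $\gamma\ge1$ versus $0<\gamma<1$, because the iterated inequalities of Theorem~\ref{Thm:iterated} are applied with $s=\gamma$ to the measure $u^q\,d\sigma$ (or to $\mu$), not with $s=\gamma+q$. More importantly, you predict the argument reduces to $\int_\Omega u^{\gamma+q}d\sigma$, $\int_\Omega(\g\mu)^\gamma d\mu$, and $\int_\Omega(\g\sigma)^{(\gamma+q)/(1-q)}d\sigma$; in fact the paper's key trick, which you do not mention, is to absorb powers using the pointwise inequalities $\g(u^q\,d\sigma)\le u$ and $\g\mu\le u$, so that after Fubini and H\"older everything collapses to $\int_\Omega u^{\gamma+q}d\sigma$ alone (in the case $\gamma\ge1$) or to $\int_\Omega u^{\gamma+q}d\sigma$ and $\int_\Omega(\g\mu)^\gamma d\mu$ (in the case $0<\gamma<1$), both already known finite. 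Without this absorption step the exponent arithmetic does not close. Finally, your proposed ``cleaner'' alternative — quoting Lemma~\ref{lem_relation} with $\mu$ replaced by $u^q\,d\sigma$ or a ``symmetric variant'' — is not a valid shortcut: that lemma concludes $\g\mu\in L^{\gamma+q}(\Omega,d\sigma)$ from hypotheses on $\g\sigma$ and $\g\mu$, whereas here you need $\g(u^q\,d\sigma)\in L^\gamma(\Omega,d\mu)$, which has the roles and exponents of the two measures genuinely swapped and does not follow from any substitution into the lemma as stated. The argument really does need to be carried out afresh, along the lines the paper gives.
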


\begin{proof} 
Suppose \eqref{cond_sigma} and \eqref{cond_mu} hold. Then, by Lemma \ref{lem_relation},
we see that \eqref{cond_mu-sigma} holds. Thus, in light of Theorem \ref{thm_int}, 
there exists a positive solution 
$u \in L^{\gamma +q}(\Omega, d\sigma)$ to \eqref{eq_int}.
We will show that $u \in L^{\gamma}(\Omega, d\mu)$ as well. By
\eqref{cond_mu}, it suffices to establish
\begin{equation}\label{cond_sigma-mu-2}
\int_{\Omega} \left[ \g(u^{q}d\sigma)\right]^{\gamma}\;d\mu < +\infty.
\end{equation}
Without loss of generality, we may assume that  $G$ is symmetric, and  $\mu \not\equiv 0$. Consider the following two cases:

$\bullet$ Case 1: $\gamma \geq 1$. Applying  \eqref{iterated2} with 
$\omega:= \sigma u^{q}$ and $s :=\gamma$, along with Fubini's theorem, and 
H\"{o}lder's inequality 
with the exponents $\frac{\gamma+q}{\gamma+q-1}$ and $\gamma +q$, we have 
\[
\begin{split}
& \int_{\Omega} \left[ \g(u^{q}d\sigma) \right]^{\gamma}\;d\mu
\leq c \int_{\Omega} \g \left( \left( \g (u^{q}d\sigma)\right)^{\gamma-1} u^{q}d\sigma \right) \;d\mu \\
&= c \int_{\Omega} \left( \g (u^{q}d\sigma)\right)^{\gamma-1}  (\g\mu) \, u^{q}d\sigma \\
&\leq c \left[ \int_{\Omega} \left(\g (u^{q}d\sigma)\right)^{\frac{(\gamma-1)(\gamma+q)}{\gamma+q-1}} u^{\frac{q(\gamma+q)}{\gamma+q-1}} \;d\sigma \right]^{\frac{\gamma+q-1}{\gamma+q}}
\left[ \int_{\Omega} (\g \mu)^{\gamma+q} \;d\sigma \right]^{\frac{1}{\gamma+q}}\\
&\leq c \left[ \int_{\Omega} u^{\gamma+q} \;d\sigma \right]^{\frac{\gamma+q-1}{\gamma+q}}
\left[ \int_{\Omega} (\g \mu)^{\gamma+q} \;d\sigma \right]^{\frac{1}{\gamma+q}} \\
&\leq c \int_{\Omega} u^{\gamma+q} \;d\sigma 
< +\infty.
\end{split}
\]

$\bullet$ Case 2: $0<\gamma<1$. We write
\[
\int_{\Omega} \left[ \g(u^{q}d\sigma) \right]^{\gamma}\;d\mu
= \int_{\Omega} \left[ \g(u^{q}d\sigma) \right]^{\gamma} F^{a-1} F^{1-a}\;d\mu,
\]
where $0<a<1$ and $F$ is a positive measurable function to be determined later. 
Applying H\"{o}lder's inequality with the conjugate exponents $\frac{1}{a}$ and $\frac{1}{1-a}$ yields
\begin{equation}\label{est3_thm-main2}
\begin{split}
\int_{\Omega} \left[ \g(u^{q}d\sigma) \right]^{\gamma}\;d\mu
&\leq \left[ \int_{\Omega} \g(u^{q}d\sigma) F^{\frac{a-1}{a}}  
 \;d\mu \right]^{a}
 \left[ \int_{\Omega} F
 \;d\mu \right]^{1-a}.
\end{split}
\end{equation}
Setting $F:=(\g\mu)^{\gamma}$ and $a:=\gamma$ in \eqref{est3_thm-main2}, we get
\begin{equation}\label{est4_thm-main2}
\begin{split}
 \int_{\Omega} \left[ \g(u^{q}d\sigma) \right]^{\gamma}\;d\mu
& \leq \left[ \int_{\Omega} \g(u^{q}d\sigma) (\g\mu)^{\gamma -1}  
 \;d\mu \right]^{\gamma}\\ & \times 
 \left[ \int_{\Omega} (\g\mu)^{\gamma}
 \;d\mu \right]^{1-\gamma}.
\end{split}
\end{equation}
We estimate the first integral on the right-hand side of \eqref{est4_thm-main2}  using Fubini's theorem, 
followed by inequality \eqref{iterated2} with $\omega= \mu$ and $s=\gamma$, 
\begin{equation}\label{est5_thm-main2}
\begin{split}
& \int_{\Omega} \g(u^{q}d\sigma) (\g\mu)^{\gamma -1}  
 \;d\mu
= \int_{\Omega} \g\left((\g\mu)^{r-q-1}d\mu\right) u^{q} \;d\sigma \\
&\leq c \int_{\Omega} (\g\mu)^{\gamma} u^{q} \;d\sigma \leq  c \int_{\Omega} u^{\gamma+q}  \;d\sigma < +\infty.
\end{split}
\end{equation}
By \eqref{est4_thm-main2} and \eqref{est5_thm-main2}, together with \eqref{cond_mu}, 
this proves \eqref{cond_sigma-mu-2}.

Conversely, since $u\in L^{\gamma}(\Omega, d\mu)$, it is clear that \eqref{cond_mu} holds.
Further, by Theorem \ref{thm_int}, condition \eqref{cond_sigma} is valid since
$u\in L^{\gamma +q}(\Omega, d\sigma)$.
\end{proof}

As an application of the preceding theorem, when $G$ is a positive Green function associated 
with $\LL$ in $\Omega$, we deduce the first part of Theorem \ref{thm-main} 
by appealing to the characterization of the generalized Green energy obtained in Sec. \ref{Sect:Energy}.

\begin{Thm}\label{thm_main3}
Let $0<q<1$, $\gamma>0$, and let $\sigma, \mu \in \mathcal{M}^{+}(\Omega)$
with $\sigma \not\equiv 0$. Let $G$ be a positive Green function associated with $\LL$ on $\Omega$.
Then there exists a positive solution $u \in L^{q}_{loc}(\Omega, d\sigma)$ 
to \eqref{main_problem} satisfying \eqref{gamma-norm}
if and only if \eqref{cond_sigma} and \eqref{cond_mu} hold.

In this case, $u$ is a minimal solution in the sense that $u\leq v$ q.e. 
for any positive solution $v \in L^{q}_{loc}(\Omega)$ to \eqref{main_problem} which
satisfies \eqref{gamma-norm}.
\end{Thm}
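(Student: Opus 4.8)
The plan is to read this off by combining Theorem \ref{thm-main2}, applied to the Green kernel $G$, with the energy characterization of Theorem \ref{Thm:Energy}, and then to extract minimality from the iteration that produces the solution. Recall that the Green function $G$ associated with $\LL$ on $\Omega$ is positive, lower semicontinuous, \emph{symmetric} (hence quasi-symmetric) and satisfies the strong maximum principle (hence the WMP), so Theorem \ref{thm-main2} and its converse part both apply; recall also that for this kernel a function solves \eqref{main_problem} in the sense of Definition \ref{def:sol} precisely when it is $\A$-superharmonic, lies in $L^{q}_{loc}(\Omega,d\sigma)$, is $\ge 0$ $d\sigma$-a.e., and satisfies the integral equation \eqref{eq_int}. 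The recurring device is to put $\omega:=\sigma u^{q}+\mu$, note that then $u=\g\omega$, and record the identity $\E_{\gamma}[\omega]=\int_{\Omega}u^{\gamma}\,d\omega=\int_{\Omega}u^{\gamma+q}\,d\sigma+\int_{\Omega}u^{\gamma}\,d\mu$.

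For sufficiency I would start from the positive solution $u\in L^{\gamma+q}(\Omega,d\sigma)\cap L^{\gamma}(\Omega,d\mu)$ of \eqref{eq_int} furnished by Theorem \ref{thm-main2} under \eqref{cond_sigma} and \eqref{cond_mu}. Then $u\in L^{q}_{loc}(\Omega,d\sigma)$ by H\"older's inequality on compact sets (using local finiteness of $\sigma$), and with $\omega:=\sigma u^{q}+\mu$ one has $\omega\not\equiv 0$ (as $\sigma\not\equiv 0$ and $u>0$) and $u=\g\omega\not\equiv+\infty$, so $u$ is $\A$-superharmonic and hence solves \eqref{main_problem}. The identity for $\E_{\gamma}[\omega]$ shows $\E_{\gamma}[\omega]<+\infty$, so Theorem \ref{Thm:Energy} yields \eqref{gamma-energy-green}, equivalently \eqref{gamma-norm} by \eqref{cond:ellipticity}. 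For necessity, given a positive solution $u\in L^{q}_{loc}(\Omega,d\sigma)$ of \eqref{main_problem} satisfying \eqref{gamma-norm}, Definition \ref{def:sol} gives $u=\g\omega$ with $\omega:=\sigma u^{q}+\mu\not\equiv 0$; Theorem \ref{Thm:Energy} turns \eqref{gamma-norm} into $\E_{\gamma}[\omega]<+\infty$, i.e.\ $u\in L^{\gamma+q}(\Omega,d\sigma)\cap L^{\gamma}(\Omega,d\mu)$, and then the converse half of Theorem \ref{thm-main2}, which needs no quasi-symmetry, gives \eqref{cond_sigma} and \eqref{cond_mu}.

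For minimality I would use the solution $u=\lim_{j}u_{j}$ built in the proof of Theorem \ref{thm_int} by the iteration $u_{j+1}:=\g(u_{j}^{q}d\sigma)+\g\mu$, started from $u_{0}:=\g\mu$ if $\mu\not\equiv 0$ and from $u_{0}:=\kappa(\g\sigma)^{\frac{1}{1-q}}$ (with $\kappa$ small, which we may also take $\le(1-q)^{\frac{1}{1-q}}$) if $\mu\equiv 0$. Let $v$ be any positive solution of \eqref{main_problem}; then $v\ge u_{0}$, either trivially from $v=\g(v^{q}d\sigma)+\g\mu\ge\g\mu$, or from the lower bound $v\ge(1-q)^{\frac{1}{1-q}}(\g\sigma)^{\frac{1}{1-q}}$ of Theorem \ref{Thm:lowerbound}. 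If $v\ge u_{j}$, then $v^{q}\ge u_{j}^{q}$ $d\sigma$-a.e., so $\g(v^{q}d\sigma)\ge\g(u_{j}^{q}d\sigma)$ and $v\ge u_{j+1}$; by induction $v\ge u_{j}$ for all $j$, hence $v\ge u$. Since $u$ and $v$ are $\A$-superharmonic this holds q.e.\ (indeed everywhere), which is in particular the asserted minimality among solutions satisfying \eqref{gamma-norm}.

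The substantive work is all upstream. The main obstacle is Theorem \ref{Thm:Energy} — the identification of $\E_{\gamma}[\sigma u^{q}+\mu]$ with $\mathbb{E}_{\gamma}[u]$ via the approximation arguments of Lemmas \ref{lemma1}--\ref{Lem_identity} — together with the construction and a priori norm bounds behind Theorem \ref{thm-main2} (which rest on the weighted norm inequality of \cite{V}, the iterated estimates of Theorem \ref{Thm:iterated}, and the interaction Lemma \ref{lem_relation}). Granting these, the present theorem is bookkeeping; the only points needing care are that $\omega=\sigma u^{q}+\mu$ is a nonzero locally finite measure with $\g\omega\not\equiv+\infty$ (so that $\g\omega$ is genuinely $\A$-superharmonic), and that the minimality argument is applied to the specific $u$ delivered by the iteration.
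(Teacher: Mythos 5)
Your proposal is correct and follows essentially the same route as the paper: the paper's proof is a terse version of exactly what you write, invoking Theorem \ref{Thm:Energy} with $\omega:=u^{q}d\sigma+\mu$ together with Theorem \ref{thm-main2} for the equivalence, and obtaining minimality by induction from the iterative construction in Theorem \ref{thm_int}. The details you supply (checking that the Green kernel is symmetric and satisfies the WMP, that $\omega\not\equiv 0$ with $\g\omega\not\equiv+\infty$, and the choice of $\kappa$ in the homogeneous case compatible with Theorem \ref{Thm:lowerbound}) are exactly the bookkeeping the paper leaves implicit.
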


\begin{proof}
This follows from Theorem \ref{Thm:Energy} with $\omega:= u^{q}d\sigma + \mu$, 
together with Theorem \ref{thm-main2}. 
Moreover, arguing by induction, we see that minimality of such a solution 
follows immediately from its construction in Theorem \ref{thm_int} (cf. \cite[Lemma 5.5]{SV}).
\end{proof}

\begin{Rem}\label{Rem_Thm}
When $\gamma =1$, uniqueness of such a solution in $u \in \dot{W}^{1,2}_{0}(\Omega)$, 
follows from a similar argument presented in \cite[Sec. 6]{SV} (see also \cite{CV}), 
using its minimality together with a convexity property 
of the Dirichlet integral $\int_{\Omega}|\nabla u|^{2}\;dx$, which is comparable to 
the expression $\int_{\Omega} \A\nabla u \cdot \nabla u \;dx$ due to the uniform ellipticity condition 
\eqref{cond:ellipticity}.
\end{Rem}

Applying the next theorem with $\omega:=u^{q}d\sigma + \mu$ yields Corollary \ref{cor-main}.

\begin{Thm}\label{below-energy}
Let $n\geq 3$, and $\omega \in \M^{+}(\Omega)$ with $\omega \not\equiv 0$. Let $G$ be a positive Green function associated with $\LL$ on $\Omega$. Suppose that $u:=\g\omega$ satisfies \eqref{gamma-norm} for some $0<
\gamma\le 1$. Then $u \in \dot{W}_{0}^{1,p}(\Omega)$ where  
$p=\frac{n(1+\gamma)}{n+\gamma-1}$. 
If, in addition, $|\Omega|<+\infty$, then the assertion is valid for 
$ 1 \leq  p \leq \frac{n(1+\gamma)}{n+\gamma-1}$.
\end{Thm}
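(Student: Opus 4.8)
The plan is to transfer the problem to the ``energy variable'' $v:=u^{\frac{\gamma+1}{2}}$. By Theorem \ref{Thm:Energy} (via Lemma \ref{lemma2}), the hypothesis $\mathbb{E}_\gamma[u]<+\infty$ is equivalent to $v\in\dot W^{1,2}_0(\Omega)$; it also forces $u\not\equiv+\infty$, so $u$ is $\A$-superharmonic and $u\in W^{1,p}_{loc}(\Omega)$. When $\gamma=1$ we have $p=2$ and $v=u$, so there is nothing to prove; hence assume $0<\gamma<1$, in which case $p=\frac{n(\gamma+1)}{n+\gamma-1}$ satisfies $\frac{n}{n-1}<p<2$ and $p<n$ (using $n\ge3$). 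First I would invoke the Sobolev embedding: since $n\ge3$ and $v\in\dot W^{1,2}_0(\Omega)$, one has $v\in L^{2^\ast}(\Omega)$ with $2^\ast=\frac{2n}{n-2}$ and $\|v\|_{L^{2^\ast}(\Omega)}\le C_n\|\nabla v\|_{L^2(\Omega)}$; equivalently $u\in L^{\frac{n(\gamma+1)}{n-2}}(\Omega)$.

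Next, using the chain rule $\nabla v=\tfrac{\gamma+1}{2}\,u^{\frac{\gamma-1}{2}}\nabla u$ a.e.\ in $\Omega$, I would write $|\nabla u|^p=\bigl(\tfrac{2}{\gamma+1}\bigr)^p u^{\frac{p(1-\gamma)}{2}}|\nabla v|^p$ and apply H\"older's inequality with the conjugate exponents $\frac{2}{2-p}$ and $\frac{2}{p}$ (legitimate since $p<2$):
\[
\int_\Omega |\nabla u|^p\,dx \le \Bigl(\tfrac{2}{\gamma+1}\Bigr)^p
\Bigl(\int_\Omega u^{\frac{p(1-\gamma)}{2-p}}\,dx\Bigr)^{\frac{2-p}{2}}
\Bigl(\int_\Omega |\nabla v|^2\,dx\Bigr)^{\frac{p}{2}}.
\]
A direct computation of $2-p=\frac{(n-2)(1-\gamma)}{n+\gamma-1}$ shows that the value $p=\frac{n(\gamma+1)}{n+\gamma-1}$ is precisely the one for which $\frac{p(1-\gamma)}{2-p}=\frac{n(\gamma+1)}{n-2}$, so the middle factor equals $\|v\|_{L^{2^\ast}(\Omega)}^{\,2^\ast\cdot\frac{2-p}{2}}<+\infty$ by the previous step; hence $|\nabla u|\in L^p(\Omega)$. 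The same $p$ gives $p^\ast:=\frac{np}{n-p}=\frac{n(\gamma+1)}{n-2}$, so $u\in L^{p^\ast}(\Omega)$ as well, and therefore $u$ lies in the inhomogeneous Dirichlet space $\dot W^{1,p}(\Omega)$ (with norm $\|\nabla u\|_{L^p(\Omega)}+\|u\|_{L^{p^\ast}(\Omega)}$).

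To upgrade to $u\in\dot W^{1,p}_0(\Omega)$, I would argue exactly as at the end of the proof of Lemma \ref{lemma2}: the quasicontinuous representative of $v\in\dot W^{1,2}_0(\Omega)$ satisfies $\lim_{x\to z}v(x)=0$ q.e.\ for $z\in\partial\Omega$ by \cite{Kol}, hence the same holds for $u=v^{\frac{2}{\gamma+1}}$, and the Deny--Lions theorem (\cite[Sec. 9.12]{AH}) then yields $u\in\dot W^{1,p}_0(\Omega)$. Finally, if $|\Omega|<+\infty$, H\"older's inequality on $\Omega$ gives, for any $1\le p\le p_0:=\frac{n(\gamma+1)}{n+\gamma-1}$ and any $\varphi\in C^\infty_0(\Omega)$,
\[
\|\nabla(u-\varphi)\|_{L^p(\Omega)}\le |\Omega|^{\frac1p-\frac1{p_0}}\,\|\nabla(u-\varphi)\|_{L^{p_0}(\Omega)};
\]
applying this to an approximating sequence for $u$ in $\dot W^{1,p_0}_0(\Omega)$ shows $u\in\dot W^{1,p}_0(\Omega)$ for the whole range $1\le p\le p_0$. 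The only genuinely delicate point is this last ``zero boundary values'' step, i.e.\ checking the hypotheses of the Deny--Lions theorem (local $W^{1,p}$ regularity, global $L^p$ gradient, and q.e.\ vanishing on $\partial\Omega$) rather than merely obtaining $u\in\dot W^{1,p}(\Omega)$; the rest is bookkeeping with Sobolev exponents.
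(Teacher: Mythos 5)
Your proof is correct and reaches the same conclusion by a genuinely different (and, I think, cleaner) route than the paper. Both arguments hinge on the same H\"older split of $|\nabla u|^p$ with conjugate exponents $\tfrac{2}{p}$ and $\tfrac{2}{2-p}$, isolating the energy integrand $|\nabla u|^2u^{\gamma-1}$ and the power $u^{p(1-\gamma)/(2-p)}=u^{p^\ast}$. The difference is how the $u^{p^\ast}$ factor is controlled. The paper truncates ($u_k=\min(u,k)$), works on an exhausting sequence of subdomains $\Omega_k$, applies a Sobolev inequality to $u_k$ on $\Omega_k$ to bound $\|u_k\|_{L^{p^\ast}(\Omega_k)}$ by $\|\nabla u_k\|_{L^p(\Omega_k)}$, and derives a self-improving estimate (the $L^p$ gradient norm appears on both sides with exponents $p$ and $\tfrac{(1-\gamma)p}{2}<p$). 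You avoid the truncation and the self-improvement entirely by observing that $u^{p^\ast}=v^{2^\ast}$ with $v=u^{(\gamma+1)/2}\in\dot W^{1,2}_0(\Omega)$, so the $L^{p^\ast}$ integrability of $u$ is already furnished by the Sobolev embedding of $\dot W^{1,2}_0(\Omega)$. That sidesteps the question of a uniform Sobolev constant on the moving domains $\Omega_k$, which the paper leaves implicit. Both proofs still face the same delicate final step of upgrading $\nabla u\in L^p(\Omega)$ together with $u\in L^{p^\ast}(\Omega)$ to membership in $\dot W^{1,p}_0(\Omega)$; you handle it more explicitly via the Deny--Lions theorem and the quasi-everywhere boundary vanishing inherited from $v$, whereas the paper asserts it at the end. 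One small inaccuracy to fix: you assert $u\in W^{1,p}_{loc}(\Omega)$ at the outset, but for $n\ge3$ the target exponent $p=\tfrac{n(1+\gamma)}{n+\gamma-1}$ exceeds $\tfrac{n}{n-1}$, so the a priori local regularity of $\A$-superharmonic functions gives only $u\in W^{1,s}_{loc}(\Omega)$ for $s<\tfrac{n}{n-1}$; the improved local integrability of $\nabla u$ should be stated as a consequence of your H\"older estimate, not as a premise of it. This is enough to run the chain rule (which only needs $u,v\in W^{1,1}_{loc}$ and $u>0$), so the argument is unaffected.
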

\begin{proof} The classical case $\gamma=1$ is known, so we can assume $\gamma \in (0, 1)$. 
Observe that $u$ is a positive $\A$-superharmonic function with 
zero boundary values, and $p=\frac{n(1+\gamma)}{n+\gamma-1} \in (\frac{n}{n-1}, 2)$. For each $k \in \N$, set 
$u_{k}=\min(u,k)$, which is a positive $\A$-superharmonic function of the class 
$L^{\infty}(\Omega)\cap W_{loc}^{1,2}(\Omega)$. 
Let $\lbrace \Omega_{k} \rbrace_{k=1}^{\infty}$ be an increasing sequence of 
relatively compact open subsets of $\Omega$ such that 
$\Omega = \bigcup_{k=1}^{\infty} \Omega_{k}$.
Applying H\"{o}lder's inequality with the exponents
$ \frac{2}{p}$ and  $\frac{2}{2-p}$, followed by Sobolev's inequality \cite[Theorem 1.56]{MZ}, we obtain
\begin{equation*}
\begin{split}
& \|(\nabla u_{k}) \chi_{\Omega_k} \|_{L^{p}(\Omega)}^{p}
= \int_{\Omega_{k}} |\nabla u_{k}|^{p} \;dx  \\
&= \int_{\Omega_{k}}  |\nabla u_{k}|^{p} u_{k}^{\frac{(\gamma-1)p}{2}} 
u_{k}^{\frac{(1-\gamma)p}{2}}\;dx \\
&\leq \left( \int_{\Omega_{k}}| \nabla u_{k}|^{2}u_{k}^{\gamma-1}\;dx \right)^{\frac{p}{2}}
\left( \int_{\Omega_{k}} u_{k}^{\frac{(1-\gamma)p}{2-p}}\;dx \right)^{\frac{2-p}{2}} \\ 
&\leq \left( \int_{\Omega}| \nabla u|^{2}u^{\gamma-1}\;dx \right)^{\frac{p}{2}}
\|u_{k}\|_{L^{\frac{(1-\gamma)p}{2-p}}(\Omega_{k})}^{\frac{(1-\gamma)p}{2}}\\
&\leq c \left( \int_{\Omega}| \nabla u|^{2}u^{\gamma-1}\;dx \right)^{\frac{p}{2}}
\|\nabla u_{k} \|_{L^{p}(\Omega_{k})}^{\frac{(1-\gamma)p}{2}},
\end{split}
\end{equation*}
that is,
\begin{equation}\label{est2-below-energy}
\|(\nabla u_{k}) \chi_{\Omega_k} \|_{L^{p}(\Omega)}^{p -\frac{(1-\gamma)p}{2}} 
\leq c \left( \int_{\Omega}| \nabla u|^{2}u^{\gamma-1}\;dx \right)^{\frac{p}{2}},
\end{equation}
where $c$ is a positive constant independent of $k$. Since $p > \frac{(1-\gamma)p}{2}$, letting
$k \rightarrow \infty$ in \eqref{est2-below-energy} yields the assertion by  the monotone convergence theorem. This proves $u \in \dot{W}_{0}^{1,p}(\Omega)$, 
which  is obviously  true for all 
$1 \leq  p \leq \frac{n(1+\gamma)}{n+\gamma-1}$ when $|\Omega|<+\infty$.\end{proof}

The next proposition shows in particular that a pair of conditions in
\eqref{cond:pair1} is sufficient for both 
\eqref{cond_sigma} and \eqref{cond_mu}.

\begin{Prop}\label{Prop}
Let $G$ be a positive lower semicontinuous kernel on $\Omega\times \Omega$ which satisfies
\begin{equation}\label{cond_G}
G(x,y) \leq c \, I_{2\alpha}(x-y), \quad \forall x,y \in \Omega,
\end{equation} 
where $I_{2\alpha}(\cdot) = |\cdot|^{2\alpha-n} $ is the Riesz kernel of order $2\alpha$  $(0<\alpha<\frac{n}{2})$ on $\R^n$, and $c$ is a positive constant. 
Let $\beta>0$. If $\omega \in L^{s}(\Omega)$ is a positive function, where 
$
s=\frac{n(\beta+1)}{n+2\alpha\beta},
$
then
\begin{equation}\label{cond_general}
\g\omega \in L^{\beta}(\Omega, d\omega).
\end{equation}
\end{Prop}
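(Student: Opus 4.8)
The plan is to dominate $\g\omega$ pointwise by a Riesz potential using \eqref{cond_G}, and then to derive $\g\omega\in L^{\beta}(\Omega,d\omega)$ from the Hardy--Littlewood--Sobolev inequality together with a single application of Hölder's inequality, where the Hölder exponents are dictated by the requirement that the surviving ``measure factor'' be exactly $\|\omega\|_{L^{s}(\Omega)}$.

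First I would observe that, $\omega$ being a nonnegative function, $d\omega=\omega\,dx$, and that \eqref{cond_G} gives $\g\omega(x)\le c\,I_{2\alpha}\omega(x)$ for every $x\in\Omega$, where from now on $\omega$ is extended by $0$ to all of $\R^{n}$. Hence it suffices to show that
\[
\int_{\R^{n}}\bigl(I_{2\alpha}\omega\bigr)^{\beta}\,\omega\,dx<+\infty .
\]
Let $s^{\ast}$ be the Sobolev conjugate of $s$, defined by $\tfrac1{s^{\ast}}=\tfrac1s-\tfrac{2\alpha}{n}$. The assumptions $\beta>0$ and $0<\alpha<\tfrac n2$ yield $1<s<\tfrac{n}{2\alpha}$, and an elementary computation gives $s^{\ast}=\tfrac{n(\beta+1)}{n-2\alpha}$; in particular $s^{\ast}>\beta$.

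Next I would apply Hölder's inequality with the conjugate exponents $\tfrac{s^{\ast}}{\beta}$ and $\tfrac{s^{\ast}}{s^{\ast}-\beta}$, the point being that $\tfrac{s^{\ast}}{s^{\ast}-\beta}=s$ (equivalently $s^{\ast}=\tfrac{s\beta}{s-1}$, which follows from $s-1=\tfrac{\beta(n-2\alpha)}{n+2\alpha\beta}$):
\begin{align*}
\int_{\R^{n}}\bigl(I_{2\alpha}\omega\bigr)^{\beta}\,\omega\,dx
&\le\Bigl(\int_{\R^{n}}\bigl(I_{2\alpha}\omega\bigr)^{s^{\ast}}\,dx\Bigr)^{\beta/s^{\ast}}\,\|\omega\|_{L^{s}(\Omega)}\\
&=\bigl\|I_{2\alpha}\omega\bigr\|_{L^{s^{\ast}}(\R^{n})}^{\beta}\,\|\omega\|_{L^{s}(\Omega)} .
\end{align*}
Finally I would invoke the Hardy--Littlewood--Sobolev inequality $\|I_{2\alpha}\omega\|_{L^{s^{\ast}}(\R^{n})}\le C\|\omega\|_{L^{s}(\R^{n})}$, which is applicable precisely because $1<s<\tfrac n{2\alpha}$, and combine it with $\g\omega\le c\,I_{2\alpha}\omega$ to conclude
\[
\int_{\Omega}(\g\omega)^{\beta}\,d\omega\le (cC)^{\beta}\,\|\omega\|_{L^{s}(\Omega)}^{\beta+1}<+\infty ,
\]
which is \eqref{cond_general}. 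The only real obstacle here is exponent bookkeeping: one has to check that the single Hölder split isolating $\|\omega\|_{L^{s}}$ forces the companion factor to be the $L^{s^{\ast}}$-norm of $I_{2\alpha}\omega$ with $s^{\ast}$ exactly the Sobolev conjugate of $s$, i.e. that the three exponents $s$, $s^{\ast}/\beta$, $s^{\ast}$ are mutually consistent; once this is verified, the conclusion is a direct citation of the Hardy--Littlewood--Sobolev inequality.
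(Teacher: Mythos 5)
Your proof is correct and matches the paper's argument essentially exactly: both dominate $\g\omega$ by $c\,I_{2\alpha}\omega$, apply Hölder to isolate $\|\omega\|_{L^{s}(\Omega)}$, and conclude by Hardy--Littlewood--Sobolev. The only cosmetic difference is that you name the companion exponent $s^{\ast}$ (the Sobolev conjugate) and verify $s^{\ast}/(s^{\ast}-\beta)=s$, whereas the paper writes it directly as $\beta s'$; since $\beta s'=s^{\ast}$, the two Hölder splits are identical.
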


\begin{proof}
Observe that $s=\frac{n(\beta+1)}{n+2\alpha\beta}>\frac{n(\beta+1)}{n+n\beta}=1$. 
Then
\begin{equation}\label{est1_Prop1}
\int_{\Omega} (\g\omega)^{\beta}\;d\omega
\leq \left(\int_{\Omega} (\g\omega)^{\beta s'}\,dx\right)^{\frac{1}{s'}} \| \omega \|_{L^{s}(\Omega)},
\end{equation}
where $s'=\frac{s}{s-1}$ is the H\"{o}lder conjugate of $s$. Denote $\tilde{\omega}$ the zero 
extension of $\omega$ to $\R^n$. By  
\eqref{cond_G} and the Hardy-Littlewood-Sobolev inequality, there is a positive constant $C\geq c$ such that
\begin{equation}\label{est2_Prop1}
\| \g\omega \|_{L^{\beta s'}(\Omega)}
\leq C \| \I_{2\alpha}\tilde{\omega}\|_{L^{\beta s'}(\R^n)} 
\leq C \| \tilde{\omega}\|_{L^{s}(\R^n)} 
= C \| \omega \|_{L^{s}(\Omega)}.
\end{equation}
Combining \eqref{est1_Prop1} and \eqref{est2_Prop1} yields \eqref{cond_general}.
\end{proof}

Proposition \ref{Prop} shows that \eqref{cond:pair2} implies 
\eqref{cond:pair0}. Hence,  Corollary \ref{cor-main2} follows from  Corollary \ref{cor-main}.

\end{document}